\newcommand{\mbb}[1]{\mathbb{#1}}
\newcommand{\mbf}[1]{\mathbf{#1}}
\newcommand{\bs}{\boldsymbol}
\newcommand{\tr}{\textup{tr}\,}
\newcommand{\wt}{\widetilde}
\newcommand{\wh}{\widehat}
\newcommand{\mc}{\mathcal}
\newcommand{\Tr}[1]{\left\langle#1\right\rangle}
\newcommand{\pf}{\textup{pf}\,}
\renewcommand{\det}{\textup{det}\,}
\renewcommand{\Re}{\textup{Re}\,}
\renewcommand{\Im}{\textup{Im}\,}
\numberwithin{equation}{section}
\newtheorem{theorem}{Theorem}
\newtheorem{lemma}{Lemma}[section]
\newtheorem{proposition}{Proposition}[section]
\newtheorem{corollary}{Corollary}[section]
\numberwithin{theorem}{section}
\numberwithin{proposition}{section}
\numberwithin{corollary}{section}
\theoremstyle{remark}
\theoremstyle{definition}
\newtheorem{definition}{Definition}[section]
\title{Least Non-Zero Singular Value and the Distribution of Eigenvectors of non-Hermitian Random Matrices}
\author{Mohammed Osman\footnote{mohammed.osman@qmul.ac.uk}\\~\\\small Queen Mary, University of London}
\date{}
\begin{document}
\maketitle

\abstract{We obtain a tail bound for the least non-zero singular value of $A-z$ when $A$ is a random matrix and $z$ is an eigenvalue of $A$ in a neighbourhood of a given point $z_{0}$ in the bulk of the spectrum. The argument relies on a resolvent comparison and a tail bound for Gauss-divisible matrices. The latter can be obtained by the method of partial Schur decomposition. Using this bound we prove that any finite collection of components of a right eigenvector corresponding to an eigenvalue uniformly sampled from a neighbourhood of a point in the bulk is Gaussian. A byproduct of the calculation is an asymptotic formula for the odd moments of the absolute value of the characteristic polynomial of real Gauss-divisible matrices.}

\section{Introduction}
We are concerned with obtaining tail bounds for the smallest non-zero singular value of $A-z_{n}$ when $z_{n}$ is an eigenvalue of $A$. Our motivation comes from the study of eigenvector statistics of non-Hermitian matrices. Following the work of Knowles and Yin \cite{knowles_eigenvector_2013} in the Hermitian case, the first step towards a comparison theorem between eigenvectors of random matrices satisfying a moment matching condition is to approximate the entries of the eigenvectors by a function of the resolvent. For Hermitian Wigner matrices we expect that the components of an eigenvector $\mbf{u}_{n}$ corresponding to the eigenvalue $\lambda_{n}$ satisfy
\begin{align*}
    u_{n}(j)\bar{u}_{n}(k)&\sim\int_{\lambda_{n}-N^{\xi}\eta}^{\lambda_{n}+N^{\xi}\eta}\Im G_{jk}(E+i\eta)dE
\end{align*}
for small $\eta\ll N$ with sufficiently high probability. This follows by the spectral decomposition of $G$ and eigenvalue repulsion. Once such an approximation has been established, one can use moment matching and a resolvent comparison to estimate the difference between the expectation values of functions of eigenvectors of two random matrices.

To adapt this argument to non-Hermitian matrices $A$ we would like an approximation for eigenvectors in terms of the resolvent $G_{z}(w)$ of the Hermitisation of $A-z$. Such a relationship is only possible when $z=z_{n}$ is an eigenvalue of $A$, in which case the spectral decomposition of $G_{z_{n}}$ reads
\begin{align*}
    G_{z_{n}}(w)&=-\frac{1}{w}\begin{pmatrix}\mbf{l}_{n}\mbf{l}_{n}^{*}&0\\0&\mbf{r}_{n}\mbf{r}_{n}^{*}\end{pmatrix}+\sum_{m=1}^{N-1}\frac{1}{s_{m}^{2}-w^{2}}\begin{pmatrix}w\mbf{u}_{m}\mbf{u}_{m}^{*}&s_{m}\mbf{u}_{m}\mbf{v}_{m}^{*}\\s_{m}\mbf{v}_{m}\mbf{u}_{m}^{*}&w\mbf{v}_{m}\mbf{v}_{m}^{*}\end{pmatrix}.
\end{align*}
The second term is a sum over the non-zero singular values $s_{1},...,s_{m-1}$ of $A-z_{n}$ with corresponding left and right singular vectors $\mbf{u}_{m},\mbf{v}_{m}$. To neglect this term after taking the imaginary part and integrating over $\Re w$ we need a tail bound on the smallest non-zero singular value $s_{N-1}(z_{n})$.

When $A$ has i.i.d. entries and $z$ is a fixed complex number, there has been intensive research into obtaining tail bounds for the least singular value. Below we point out a few relevant results; for a more extensive overview one can consult \cite{rudelson_non-asymptotic_2010,tikhomirov_quantitative_2022} and references therein. The bound
\begin{align}
    P\left(s_{N}(z)\leq \eta\right)&\leq CN\eta,
\end{align}
for some $C>0$ was proved in the case of real Gaussian matrices by Sankar--Spielman--Teng \cite{sankar_smoothed_2006}. For general distributions with bounded second moment, Tao--Vu \cite[Theorem 3.2]{tao_smooth_2010} obtained a slightly weaker version. Nguyen \cite{nguyen_random_2018} obtained overcrowding estimates of the form
\begin{align}
    P\left(s_{N-k+1}(z)\leq\eta\right)&\leq CN^{k-1}(kp(\eta))^{(k-1)^{2}},
\end{align}
where $p(\eta)=\sup_{x}P(|a_{ij}-x|<N\eta)$. For matrices whose entries have bounded density, an improved bound has been obtained by Erd\H{o}s--Ji \cite{erdos_wegner_2023}:
\begin{align}
    P\left(s_{N-k+1}(z)\leq\eta\right)&\leq\begin{cases} CN^{\delta}(N\eta)^{2k^{2}}&\quad \text{complex entries}\\
    CN^{\delta}(N\eta)^{k^{2}}&\quad \text{real entries}
\end{cases},
\end{align}
for any $\delta>0$, which is optimal up to the factor $N^{\delta}$.

After establishing a comparison theorem, the problem of eigenvector distributions is reduced to the study of Gauss-divisible matrices. In the Hermitian case, following Bourgade--Yau \cite{bourgade_eigenvector_2017}, this can be done by analysing the eigenvector flow induced by Dyson Brownian motion. In the non-Hermitian case, the analogous flow is much more complicated and as yet no progress has been made towards proving universality directly from this flow. There is however an alternative approach based on explicit formulas obtained by a partial Schur decomposition, which was used to prove bulk universality of the eigenvalue correlation functions in \cite{maltsev_bulk_2023,osman_bulk_2024,dubova_bulk_2024}. As we will see below, this method can be extended to obtain results about eigenvector distributions as well. The drawback of this approach is that we cannot handle individual eigenvalues/eigenvectors but instead need to take a sum over small neighbourhoods centred at points in the support of the global eigenvalue density (this drawback is also a feature of the use of Girko's formula).

A few days before the first version of this manuscript was uploaded to the arXiv the work of Dubova--Yang--Yau--Yin \cite{dubova_gaussian_2024} appeared. They consider complex matrices and prove Gaussian statistics for eigenvectors associated to several eigenvalues separated by mesoscopic distances $N^{-1/2+\epsilon}$, i.e. the expectation value in \eqref{eq:multiple} below for $|w_{j}-w_{k}|>N^{-1/2+\epsilon},\,j\neq k$. Their approach consists of approximating the moment generating function on a high probability event, whereas we approximate the moments themselves. For the resolvent comparison they also obtain a version of \Cref{thm1} below (Lemma 24 in their paper) by a different argument relying on the universality of the two smallest singular values of $A-z$.

\section{Main Results}
\paragraph{Notation} $\mbb{M}_{n}(\mbb{F}),\,\mbb{M}^{h}_{n}(\mbb{F}),\,\mbb{M}^{sym}_{n}(\mbb{F}),\,\mbb{M}^{skew}_{n}(\mbb{F})$ denote respectively the spaces of general, Hermitian, symmetric and skew-symmetric matrices with entries in $\mbb{F}$. For $M\in\mbb{M}_{n}(\mbb{F})$, $|M|=\sqrt{M^{*}M}$, $\|M\|$ denotes the operator norm and $\|M\|_{2}$ the Frobenius norm. The real and imaginary parts of $M$ are defined by $\Re M=\frac{1}{2}(M+M^{*})$ and $\Im M=\frac{1}{2i}(M-M^{*})$ respectively. $U(n),O(n)$ and $USp(n)$ denote the unitary, orthogonal and unitary symplectic groups respectively. We define the matrices
\begin{align}
    E_{1,n}&=\begin{pmatrix}1&0\\0&0\end{pmatrix}\otimes1_{n},\quad E_{2,n}=\begin{pmatrix}0&0\\1&0\end{pmatrix}\otimes1_{n},\quad E_{3,n}=\begin{pmatrix}0&0\\0&1\end{pmatrix}\otimes1_{n}.\label{eq:Pauli}
\end{align}
We denote by $\mbb{C}_{+}$ the open upper half-plane and $\mbb{D}\subset\mbb{C}$ the open unit disk. When $x$ belongs to a coset space of a compact Lie group (e.g. $U(n),\,O(n)/O(n-m)$), we denote by $d_{H}x$ the Haar measure.

\begin{definition}\label{def1}
We say that $A=(a_{jk})_{j,k=1}^{N}$ is a \textbf{non-Hermitian Wigner matrix} if $a_{jk}$ are independent complex random variables such that $\Re a_{jk}$ is independent of $\Im a_{jk}$ and
\begin{align}
    \mbb{E}\left[a_{jk}\right]&=0,\label{cond1}\\
    \mbb{E}\left[N|a_{jk}|^{2}\right]&=1,\label{cond2}\\
    \mbb{E}\left[N^{p/2}(\Re a_{jk})^{p-q}(\Im a_{jk})^{q}\right]&\leq C_{p},\quad p>2,\,0\leq q\leq p.\label{cond3}
\end{align}
\end{definition}
This includes the case when $\xi$ is real, i.e. $\Im \xi$ is identically zero.

\begin{definition}\label{def2}
We say that $A$ and $B$ are $t$-\textbf{matching} for some $t\geq0$ if they are independent non-Hermitian Wigner matrices such that
\begin{align}
    \mbb{E}\left[\Re(a_{jk})^{p-q}\Im(a_{jk})^{q}\right]&=\mbb{E}\left[\Re(b_{jk})^{p-q}\Im(b_{jk})^{q}\right],\quad p=1,2,3,\,q=0,...,p,
\end{align}
and
\begin{align}
    \left|\mbb{E}\left[\Re(a_{jk})^{4-q}\Im(a_{jk})^{q}\right]-\mbb{E}\left[\Re(b_{jk})^{4-q}\Im(b_{jk})^{q}\right]\right|&\leq\frac{t}{N^{2}},\quad q=0,...,4.
\end{align}
\end{definition}

We denote by $s_{1}(z)\geq\cdots\geq s_{N}(z)$ the singular values of $A_{z}:=A-z$. The corresponding left and right singular vectors are denoted by $\mbf{u}_{n}(z)$ and $\mbf{v}_{n}(z)$ respectively, i.e. $A_{z}\mbf{v}_{n}(z)=s_{n}(z)\mbf{u}_{n}(z)$. We will often suppress the $z$ dependence of $s_{n},\mbf{u}_{n},\mbf{v}_{n}$. $A$ has a bi-orthogonal basis of left and right eigenvectors $\mbf{l}_{n},\mbf{r}_{n}$, which we normalise such that 
\begin{align}
    \|\mbf{r}_{n}\|_{2}&=1,\quad\mbf{l}_{n}^{*}\mbf{r}_{m}=\delta_{nm}.\label{eq:normalisation}
\end{align}
The Hermitisation of $A_{z}$ is the matrix
\begin{align}
    \mc{H}_{z}&=\begin{pmatrix}0&A_{z}\\A^{*}_{z}&0\end{pmatrix},
\end{align}
with resolvent $G_{z}(w)=(\mc{H}_{z}-w)^{-1}$. We also define the resolvents
\begin{align}
    H_{z}(\eta)&=(\eta^{2}+|A_{z}|^{2})^{-1},\\
    \wt{H}_{z}(\eta)&=(\eta^{2}+|A^{*}_{z}|^{2})^{-1},
\end{align}
which appear in the block decomposition of $G_{z}(i\eta)$:
\begin{align}
    G_{z}(i\eta)&=\begin{pmatrix}i\eta \wt{H}_{z}(\eta)&X_{z}H_{z}(\eta)\\H_{z}(\eta)X^{*}_{z}&i\eta H_{z}(\eta)\end{pmatrix}.
\end{align}
In the rest of the paper $N$ is a large integer tending to infinity and $\Tr{M}=N^{-1}\tr M$ is the trace normalised by $N$ (regardless of the dimension of $M$).

Our first result is a bound on the least non-zero singular value of $A-z_{n}$ (equivalently, the second least singular value $s_{N-1}(z_{n})$) for bulk eigenvalues $z_{n}$. We restrict our attention to the bulk in order to reuse some of the analysis from \cite{maltsev_bulk_2023,osman_bulk_2024}.
\begin{theorem}\label{thm1}
Let $A$ be a non-Hermitian Wigner matrix. Let $z_{1},...,z_{N}$ denote the eigenvalues of $A$ and $s_{N}(z)\leq\cdots\leq s_{1}(z)$ denote the singular values of $A-z$. Then for any fixed $\epsilon\in(0,1/84)$, $\xi>0$, $r>0$ and $z\in\mbb{D}$ there is a constant $C_{\epsilon,r,z}$ such that
\begin{align}
    P\left(\min_{\sqrt{N}|z_{n}-z|<r}s_{N-1}(z_{n})<N^{-1-\epsilon}\right)&\leq C_{\epsilon,r,z}\left(N^{-\epsilon}+N^{-1/3+\xi+36\epsilon}\right).
\end{align}
\end{theorem}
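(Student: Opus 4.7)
The plan is to prove the bound in two stages: first, establish it for a Gauss-divisible reference ensemble via the partial Schur decomposition developed in \cite{maltsev_bulk_2023,osman_bulk_2024}, and then transfer it to a general non-Hermitian Wigner matrix by a Lindeberg-style resolvent comparison. The two error terms in the statement correspond respectively to the intrinsic tail estimate for the Gauss-divisible reference ($N^{-\epsilon}$) and to the accumulated moment-matching swap error at resolvent scale $\eta \sim N^{-1-\epsilon}$ ($N^{-1/3+\xi+36\epsilon}$).

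For the Gauss-divisible step I would write $A = A_{0} + \sqrt{t_{0}}\,W$ with $W$ an independent Ginibre matrix and $t_{0}$ polynomially small, of order $N^{-1/3+\xi}$. The partial Schur factorisation $A = U(D+T)U^{*}$, with $D$ diagonal and $T$ strictly upper triangular, reduces the task to understanding the singular values of $(D-z_{n}I)+T$, and therefore of its bottom $(N-1)\times(N-1)$ block once $z_{n}$ has been placed in the first diagonal position. Integrating $U$ against Haar measure and freezing $z_{n}$ and the corresponding top row of the Schur form leaves this block with an explicit Gaussian component of variance of order $t_{0}/N$. Applying a Sankar--Spielman--Teng-type estimate to the conditioned block and taking a union bound over the $O(r^{2})$ eigenvalues in the window $\sqrt{N}|z_{n}-z|<r$ then gives a tail of order $N^{-\epsilon}$ for the reference ensemble.

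For the comparison step, I would approximate the event $\{s_{N-1}(z_{n}) < N^{-1-\epsilon}\}$ by a smooth functional of $H_{z}(\eta)$ at a scale $\eta$ slightly above $N^{-1-\epsilon}$, using that $\tr H_{z}(\eta) \geq (\eta^{2}+s_{N-1}(z)^{2})^{-1}$ explodes when $\eta$ approaches the least non-zero singular value. Interpolating between $A$ and its Gauss-divisible companion $\sqrt{1-t_{0}}\,A + \sqrt{t_{0}}\,W$, which is $t_{0}$-matching with $A$ in the sense of \Cref{def2}, I would run a four-moment Lindeberg swap. Each derivative of a resolvent entry with respect to a matrix element contributes a factor of order $(N\eta)^{-1}=N^{\epsilon}$, and tracking the total number of derivatives generated by the smoothing procedure bounds the overall cost by $N^{36\epsilon}$, producing a net error of order $t_{0}\cdot N^{36\epsilon}=N^{-1/3+\xi+36\epsilon}$.

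The main obstacle will be carrying out the resolvent comparison at this very small scale. One must design a smooth approximation to $\mathbf{1}\{s_{N-1}(z_{n}) < N^{-1-\epsilon}\}$ that (i) depends on $A$ only through $H_{z}(\eta)$ at scales detectable by the local law, (ii) has matrix-entry derivatives growing by no more than a bounded power of $(N\eta)^{-1}$, and one must simultaneously couple the minimum over eigenvalues $z_{n}$ inside the bulk window to a fixed reference resolvent at $z$ via the local circular law and rigidity estimates imported from \cite{maltsev_bulk_2023,osman_bulk_2024}. The precise number of derivatives required is what ultimately determines the explicit constants appearing in the exponent and the admissible range $\epsilon \in (0, 1/84)$.
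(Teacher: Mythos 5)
Your high-level architecture (Gauss-divisible reference plus Lindeberg comparison, with the partial Schur decomposition doing the work for the reference ensemble) matches the paper's, but both legs of your argument are missing mechanisms that the paper depends on, and one of them is a genuine gap.

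\paragraph{The observable and Girko's formula.} The paper does not directly compare probabilities of the event $\{s_{N-1}(z_n)<\eta\}$. It replaces the indicator by the random field
\begin{align*}
    f_{\eta}(z) \;=\; \eta^{2}\tr H_{z}(\eta)-1 \;=\;\sum_{m=1}^{N-1}\frac{\eta^{2}}{s_{m}^{2}(z)+\eta^{2}} \quad\text{when $z=z_n$},
\end{align*}
so that the ``$-1$'' subtracts off the zero singular value, and $f_\eta(z_n)\ge 1/2$ whenever $s_{N-1}(z_n)<\eta$. Markov's inequality then reduces the claim to bounding $\mathbb{E}[\sum_n f_\eta(z_n)g_{z_0}(z_n)]$. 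Your proposal speaks of smoothing the indicator via $\tr H_z(\eta)$ and coupling the minimum over $z_n$ to a fixed reference resolvent ``via local circular law and rigidity,'' but this does not by itself give you something a Lindeberg swap can handle: the eigenvalues $z_n$ are themselves nonsmooth functionals of $A$. The paper's way out is Girko's formula,
\begin{align*}
    \sum_{n}f_{\eta}(z_{n})g_{z_{0}}(z_{n})\;=\;-\frac{1}{4\pi}\int_{\mathbb{C}}\Delta_{z}\bigl(f_{\eta}(z)g_{z_{0}}(z)\bigr)\int_{0}^{N^{D}}\Im\tr G_{z}(i\sigma)\,d\sigma\,dz+O(N^{-D}),
\end{align*}
which converts the sum over random eigenvalues into a double integral of resolvent traces of $A$ at variable spectral parameters, to which the four-moment swap can actually be applied. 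The product structure $\Delta_z(f_\eta g)\cdot\Im\tr G_z$ also requires the a priori bounds of \Cref{lem:aPriori} in the regime $\eta\ll N^{-1}$; without this one cannot split the $\sigma$-integral and control the small-$\sigma$ regime the way the paper does in \Cref{lem:comp}. Rigidity of non-Hermitian eigenvalues is not available in the form you would need and is not what the paper uses.

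\paragraph{The Gauss-divisible estimate.} You propose to apply a Sankar--Spielman--Teng-type least singular value bound to the conditioned $(N-1)\times(N-1)$ block $B' = X^{(\mathbf{v})}+\sqrt{t'}Y'$ and then take a union bound. But the partial Schur change of variables puts a weight $|\det B'_{z}|^{2}$ (or $|\det B'_u|$ in the real case) into the integrand; see \eqref{eq:complexPartialSchur} and \eqref{eq:real-real}. This weight is not a harmless reweighting of $B'$ and cannot simply be dropped when estimating $P(s_{N}(B'_{z})<\eta)$. The paper side-steps this entirely: it bounds $f_\eta(z)\le\eta^2\tr|B'_z|^{-2}$, notes that $\eta^2\tr|B'_z|^{-2}\cdot|\det B'_z|^2$ is an explicit derivative of a regularized determinant, and then computes $\mathbb{E}_{Y'}[\,\cdot\,]$ exactly by a supersymmetric/Hubbard--Stratonovich calculation before doing Laplace asymptotics in the auxiliary integral. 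The resulting bound is $CN^{2}\eta^{2}$ (with an extra $|\log N\eta|$ for the real-real case). Your SST-plus-union-bound sketch would at best yield $N\eta$, and even that would require you to show that SST survives the determinant tilt; absent that justification the argument has a hole. Also note that the number of $z_n$ in the window $\sqrt{N}|z_n-z_0|<r$ is $\prec 1$ rather than deterministically $O(r^2)$, which is why the paper uses Markov's inequality on the sum rather than a union bound over eigenvalues.

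\paragraph{Summary.} The two-stage strategy, the choice $t\sim N^{-1/3+\tau}$, the identification of $(N\eta)^{-1}=N^{\epsilon}$ as the per-derivative cost, and the final bookkeeping $t\cdot N^{36\epsilon}$ are all consistent with the paper. What is missing is (i) the precise observable $f_\eta$ with the $-1$ subtraction, (ii) Girko's formula as the bridge between the eigenvalue sum and a Lindeberg-swappable functional, together with the low-$\sigma$ estimates of \Cref{lem:aPriori}, and (iii) a correct treatment of the determinant Jacobian in the partial Schur decomposition, where an SST bound alone does not apply.
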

In the Gauss-divisble case a stronger version of the above bound holds. One can show that with $A=X+\sqrt{t}Y$ and $Y$ a complex Ginibre matrix, then for any (fixed) $\xi,D>0$ and $t\geq N^{-1+\xi}$ we have
\begin{align}
    P\left(\min_{\sqrt{N}|z_{n}-z|<r}s_{N-1}(z_{n})<\eta\right)&\leq C_{r,z}\left(N^{2}\eta^{2}+N^{-D}\right),
\end{align}
uniformly in $\eta\in(0,N^{-1}]$, with analogous statements in the real case. A more refined resolvent comparison might yield improvements to the factor $N^{-1/3+36\epsilon}$ (for example in the complex case or  for real eigenvalues of real matrices one can obtain $N^{-1/2+42\epsilon}$ using the improved local law from \cite{cipolloni_optimal_2023}) but extending the stronger bound to general matrices by this method seems difficult.

Using this result we can prove a comparison theorem for eigenvector distributions. Let $l\in\mbb{N}$, $\mbf{q}_{1},...,\mbf{q}_{l}\in\mbb{C}^{N}$ and $\theta:\mbb{C}\times\mbb{R}^{l}_{+}\to\mbb{C}$ be a measurable function supported in $B_{r}(0)\times\mbb{R}^{l}_{+}$ for some fixed $r>0$. We define the (symmetrised) joint distribution $\rho_{z_{0},\mbf{q}_{1},...,\mbf{q}_{l}}$ of an eigenvalue $z_{n}$ and the components of the corresponding right eigenvector $\mbf{r}_{n}$ along $\mbf{q}_{1},...,\mbf{q}_{l}$ by
\begin{align}
    \int_{\mbb{C}\times\mbb{R}^{l}_{+}}\theta(z,\mbf{x})\rho_{z_{0},\mbf{q}_{1},...,\mbf{q}_{l}}(z,\mbf{x})dzd\mbf{x}&:=\mbb{E}\left[\mc{L}_{\theta}(z_{0},\mbf{q}_{1},...,\mbf{q}_{l})\right],
\end{align}
where
\begin{align}
    \mc{L}_{\theta}(z_{0},\mbf{q}_{1},...,\mbf{q}_{l})&:=\sum_{n=1}^{N}\theta\left(\sqrt{N}(z_{n}-z_{0}),N|\mbf{q}^{*}_{1}\mbf{r}_{n}|^{2},...,N|\mbf{q}^{*}_{l}\mbf{r}_{n}|^{2}\right).
\end{align}
For real matrices we define two separate distributions for real and complex eigenvalues through the statistics
\begin{align}
    \mc{L}^{\mbb{R}}_{\theta}(u_{0},\mbf{q}_{1},...,\mbf{q}_{l})&=\sum_{n=1}^{N_{\mbb{R}}}\theta\left(\sqrt{N}(u_{n}-u_{0}),N|\mbf{q}_{1}^{T}\mbf{r}^{\mbb{R}}_{n}|^{2},...,N|\mbf{q}_{l}^{T}\mbf{r}^{\mbb{R}}_{n}|^{2}\right),\\
    \mc{L}^{\mbb{C}}_{\theta}(z_{0},\mbf{q}_{1},...,\mbf{q}_{l})&=\sum_{n=1}^{N_{\mbb{C}}}\theta\left(\sqrt{N}(z_{n}-z_{0}),N|\mbf{q}_{1}^{*}\mbf{r}^{\mbb{C}}_{n}|^{2},...,N|\mbf{q}_{l}^{*}\mbf{r}^{\mbb{C}}_{n}|^{2}\right),
\end{align}
where the sums are over the $N_{\mbb{R}}$ real eigenvalues and $N_{\mbb{C}}$ complex eigenvalues in the upper half-plane respectively.
\begin{theorem}\label{thm2}
Let $\epsilon>0$ be fixed and $t=N^{-\epsilon}$. Fix $r>0$ and let $\theta:B_{r}(0)\times\mbb{R}^{l}\to\mbb{C}$ be differentiable to fifth order and satisfy
\begin{align}
    \left|\frac{\partial^{m}}{\partial\mbf{x}^{m}}\theta(z,x_{1},...,x_{l})\right|&\leq C(1+\|\mbf{x}\|)^{C},\quad m=(m_{1},...,m_{l}),\,m_{1}+\cdots+m_{l}\leq 5,
\end{align}
for some $C>0$. Fix $z_{0}\in\mbb{D}$ and $u_{0}\in(-1,1)$. Then for any fixed $l\in\mbb{N}$ there is a $\delta>0$ such that
\begin{enumerate}[i)]
\item
if $A$ and $B$ are $t$-matching complex matrices then
\begin{align}
    \left|\mbb{E}_{A}\left[\mc{L}_{\theta}(z_{0},\mbf{q}_{1},...,\mbf{q}_{l})\right]-\mbb{E}_{B}\left[\mc{L}_{\theta}(z_{0},\mbf{q}_{1},...,\mbf{q}_{l})\right]\right|&\leq N^{-\delta};
\end{align}
\item
if $A$ and $B$ are $t$-matching real matrices then
\begin{align}
    \left|\mbb{E}_{A}\left[\mc{L}^{\mbb{R}}_{\theta}(u_{0},\mbf{q}_{1},...,\mbf{q}_{l})\right]-\mbb{E}_{B}\left[\mc{L}^{\mbb{R}}_{\theta}(u_{0},\mbf{q}_{1},...,\mbf{q}_{l})\right]\right|&\leq N^{-\delta}
\end{align}
and
\begin{align}
    \left|\mbb{E}_{A}\left[\mc{L}^{\mbb{C}}_{\theta}(z_{0},\mbf{q}_{1},...,\mbf{q}_{l})\right]-\mbb{E}_{B}\left[\mc{L}^{\mbb{C}}_{\theta}(z_{0},\mbf{q}_{1},...,\mbf{q}_{l})\right]\right|&\leq N^{-\delta}.
\end{align}
\end{enumerate}
\end{theorem}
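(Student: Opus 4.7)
The plan is a Lindeberg-type resolvent comparison, with Theorem~\ref{thm1} as the key input that allows $\mc{L}_\theta$ to be approximated by a smooth functional of $G_z(i\eta)$. The structure parallels the Knowles--Yin argument \cite{knowles_eigenvector_2013} in the Hermitian case, adapted to the Hermitisation.

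The first step is a resolvent representation of the eigenvector observable. From the spectral decomposition of $G_{z_n}(i\eta)$ given in the Introduction,
\begin{align*}
\mbf{q}_k^*\,\eta^2 H_{z_n}(\eta)\,\mbf{q}_k
= |\mbf{q}_k^*\mbf{r}_n|^2 + \mbf{q}_k^*\Pi_n(\eta)\mbf{q}_k,\qquad
\Pi_n(\eta)=\eta^2\sum_{m<N}\frac{\mbf{v}_m(z_n)\mbf{v}_m(z_n)^*}{\eta^2+s_m(z_n)^2},
\end{align*}
with $\|\Pi_n(\eta)\|\leq\bigl(\eta/s_{N-1}(z_n)\bigr)^2$. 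Theorem~\ref{thm1} gives $s_{N-1}(z_n)\geq N^{-1-\epsilon_0}$ uniformly over $\sqrt{N}|z_n-z_0|<r$ on an event of probability $1-o(1)$, so the choice $\eta=N^{-1-\epsilon_0-\epsilon_1}$ produces an error at most $N^{-2\epsilon_1}$ in the identification $N|\mbf{q}_k^*\mbf{r}_n|^2\approx N\mbf{q}_k^*\eta^2H_{z_n}(\eta)\mbf{q}_k$. The sum over eigenvalues is then recast via a Girko / log-determinant identity: using $\partial_{\bar z}\partial_z\log|\det A_z|^2=2\pi\sum_n\delta_{z_n}$ together with a representation of $\log\det(\eta^2+|A_z|^2)$ in terms of $\tr H_z(\eta)$ and of $\mbf{q}_k^*H_z(\eta)\mbf{q}_k$, one writes
\begin{align*}
\mc{L}_\theta = \mc{F}\!\left(\left\{\mbf{q}_k^*H_z(\eta)\mbf{q}_k,\,\tr H_z(\eta)\right\}_{z\in\mathrm{grid}}\right) + O(N^{-\delta_0})
\end{align*}
for some smooth functional $\mc{F}$ and some $\delta_0>0$, with arguments sampled on a mesoscopic grid in $z$.

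The second step is an entry-by-entry Green function swap on $\mc{F}$. Replacing $a_{ij}$ by $b_{ij}$ one at a time, Taylor-expanding to fourth order, and using $\partial_{a_{ij}}G_z=-G_z(\partial_{a_{ij}}\mc{H}_z)G_z$ to reduce derivatives to polynomials in entries of $G_z$, the $t$-matching hypothesis kills the first three orders while the fourth-order remainder is bounded by $(t/N^2)$ times a product of resolvent entries controlled by the isotropic local law of \cite{maltsev_bulk_2023,osman_bulk_2024}. Summing over the $N^2$ entries yields a total error $\lesssim tN^{C(\epsilon_0+\epsilon_1)}=N^{-\epsilon+C(\epsilon_0+\epsilon_1)}$, which is $N^{-\delta}$ for some $\delta>0$ once $\epsilon_0,\epsilon_1$ are chosen small relative to $\epsilon$. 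Theorem~\ref{thm1} applies uniformly along the interpolation since each intermediate matrix still has entries satisfying the non-Hermitian Wigner conditions.

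Part~(ii) additionally requires separating real eigenvalues from complex conjugate pairs. Since the indicator that $\Im z_n=0$ is not smooth in $A$, I would regularise it by a narrow bump $\varphi(N^{1/2+\epsilon_2}\Im z_n)$ and argue, via rigidity of real bulk eigenvalues on scale $N^{-1/2}$ and the fact that non-real eigenvalues sit at imaginary parts $\gtrsim N^{-1/2}$ with probability $1-O(N^{-D})$ for any $D>0$, that this regularisation perturbs the expectation by only $O(N^{-\delta})$. The main obstacle throughout is constructing the functional $\mc{F}$ that is simultaneously accurate to $N^{-\delta_0}$, polynomial of controllably low degree in the entries of $G_z$ (so the Lindeberg expansion closes), and robust to the interpolation; this forces a tight but achievable balance between $\eta$, the mesoscopic grid spacing in $z$, and the tail probability afforded by Theorem~\ref{thm1}.
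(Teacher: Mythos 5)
Your overall strategy mirrors the paper's: approximate the eigenvector observable by a resolvent quantity via \Cref{thm1}, make the sum over eigenvalues $C^2$ in $z$ so Girko's formula applies, and then run a fourth-order Lindeberg swap. However, the key quantitative step is wrong as stated. You bound the error in $N|\mbf{q}^*_k\mbf{r}_n|^2\approx N\mbf{q}^*_k\eta^2H_{z_n}(\eta)\mbf{q}_k$ by $N\|\mbf{q}_k\|^2\|\Pi_n(\eta)\|$ and claim this is at most $N^{-2\epsilon_1}$; with $\|\mbf{q}_k\|=1$ the operator-norm bound actually gives $N\cdot(\eta/s_{N-1})^2\leq N^{1-2\epsilon_1}$, which is \emph{not} small. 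The operator norm alone cannot see that the bad contributions spread over many $\mbf{v}_m$. To close the gap you must use delocalisation of singular vectors, i.e.\ $N|\mbf{q}^*\mbf{v}_m(z_n)|^2\prec 1$ for $s_m<1$, together with the extended isotropic local law of \Cref{lem:aPriori} to control $\sum_{m<N}\eta^2|\mbf{q}^*\mbf{v}_m|^2/(\eta^2+s_m^2)$. This is precisely what the events $\mc{E}_2,\mc{E}_3$ do in the paper's proof (combined with the paper's hierarchy $\epsilon>\zeta>\xi$ and the Lorentzian cut-off at $|E|<N^{\zeta}\eta$). Your sketch also glosses over the main technical difficulty in applying Girko: the small-$\sigma$ part of the integral $\int_0^{\infty}\Im\tr G_z(i\sigma)\,d\sigma$ cannot be handled by local laws alone and needs the removal of small disks around eigenvalues, a deterministic bound near the spectrum, and the least-singular-value tail bound \Cref{prop:leastSV} — the role the functional $\mc{F}$ plays in your sketch is left entirely unspecified, and that is exactly where the estimates from \Cref{lem:comp} carry the weight.

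The treatment of part (ii) is also incorrect. You assert that non-real eigenvalues of real matrices sit at imaginary parts $\gtrsim N^{-1/2}$ with probability $1-O(N^{-D})$ for all $D>0$; this is false. What is true, and what the paper uses, is the level repulsion estimate of Tao--Vu: the probability that more than one eigenvalue lies within distance $N^{-1/2-\tau}$ of a fixed $u_0\in(-1,1)$ is $O(N^{-\tau})$ for a fixed small $\tau>0$, no better. That estimate is enough to extend $\theta$ to a thin strip and replace the sum over real eigenvalues by a sum over all eigenvalues at the cost of $O(N^{-\delta})$, but the error is polynomially small of fixed order, not superpolynomially small. Your regularisation-by-bump idea is in the right spirit, but the quantitative input you claim does not exist, so the argument as written does not close.
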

For simplicity we have restricted our attention to a single eigenvalue/eigenvector pair but the idea of the proof can be applied to expectations of the form
\begin{align}
    \mbb{E}\left[\sum_{j_{1}\neq\cdots\neq j_{m}}\theta\left(\sqrt{N}(z_{j_{1}}-w_{1}),...,\sqrt{N}(z_{j_{m}}-w_{m}),N|\mbf{q}_{1}^{*}\mbf{r}_{j_{1}}|^{2},...,N|\mbf{q}_{m}^{*}\mbf{r}_{j_{m}}|^{2}\right)\right],\label{eq:multiple}
\end{align}
for finite $m$ (in fact one can take $m=N^{\delta}$ for sufficiently small $\delta>0$).

\Cref{thm2} reduces the calculation of the joint distribution to the Gauss-divisible case, which can be done by the method of partial Schur decomposition in a similar way to the proof of bulk universality of the correlation functions. We obtain the following result in the spirit of \cite[Corollary 1.3]{bourgade_eigenvector_2017}.
\begin{theorem}\label{thm3}
Let $\theta(z,\mbf{x})$ be supported in $B_{r}(0)\times\mbb{R}^{l}_{+}$ for some fixed $r>0$ and be polynomial in the entries of $\mbf{x}\in\mbb{R}^{l}$. Then there is a $\delta>0$ such that:
\begin{enumerate}[i)]
\item
for a complex non-Hermitian Wigner matrix $A$ and fixed $z_{0}\in\mbb{D}$ we have
\begin{align}
    \mbb{E}_{A}\left[\mc{L}_{\theta}(z_{0},\mbf{q}_{1},...,\mbf{q}_{l})\right]&=\frac{1}{\pi}\int_{\mbb{C}\times\mbb{R}^{l}_{+}}\theta\left(z,\mbf{x}\right)\rho_{\mbf{q}_{1},...,\mbf{q}_{l}}(\mbf{x})dzd\mbf{x}+O(N^{-\delta}),
\end{align}
where $\rho_{\mbf{q}_{1},...,\mbf{q}_{l}}$ is the density of $(|\mbf{q}_{1}^{*}\mbf{p}|^{2},...,|\mbf{q}_{l}^{*}\mbf{p}|^{2})$ for a standard complex Gaussian vector $\mbf{p}\in\mbb{C}^{N}$;
\item
for a real non-Hermitian Wigner matrix $A$ and fixed $u_{0}\in(-1,1)$ we have
\begin{align}
    \mbb{E}_{A}\left[\mc{L}^{\mbb{R}}_{\theta}(u_{0},\mbf{q}_{1},...,\mbf{q}_{l})\right]&=\frac{1}{\sqrt{2\pi}}\int_{\mbb{R}\times\mbb{R}^{l}_{+}}\theta(u,\mbf{x})\rho_{\mbf{q}_{1},...,\mbf{q}_{l}}(\mbf{x})dud\mbf{x}+O(N^{-\delta}),
\end{align}
where $\rho_{\mbf{q}_{1},...,\mbf{q}_{l}}$ is the density of $(|\mbf{q}_{1}^{T}\mbf{p}|^{2},...,|\mbf{q}_{l}^{T}\mbf{p}|^{2})$ for a standard real Gaussian vector $\mbf{p}\in\mbb{R}^{N}$;
\item
for a real non-Hermitian Wigner matrix $A$ and fixed $z_{0}\in\mbb{C}_{+}$ we have
\begin{align}
    \mbb{E}_{A}\left[\mc{L}^{\mbb{C}}_{\theta}(z_{0},\mbf{q}_{1},...,\mbf{q}_{l})\right]&=\frac{1}{\pi}\int_{\mbb{C}_{+}\times\mbb{R}^{l}_{+}}\theta(z,\mbf{x})\rho_{\mbf{q}_{1},...,\mbf{q}_{l}}(\mbf{x})dzd\mbf{x}+O(N^{-\delta}),
\end{align}
where $\rho_{\mbf{q}_{1},...,\mbf{q}_{l}}$ is the density of $(|\mbf{q}_{1}^{*}\mbf{p}|^{2},...,|\mbf{q}_{l}^{*}\mbf{p}|^{2})$ for a standard complex Gaussian vector $\mbf{p}\in\mbb{C}^{N}$;
\item
for a real non-Hermitian Wigner matrix $A$ and fixed $u_{0}\in(-1,1)$ we have
\begin{align}
    \mbb{E}_{A}\left[\mc{L}^{\mbb{C}}_{\theta}(u_{0},\mbf{q}_{1},...,\mbf{q}_{l})\right]&=\frac{1}{\pi}\int_{\mbb{C}_{+}}\int_{0}^{\infty}\frac{2y\delta}{\sqrt{\delta^{2}+4y^{2}}}e^{-\frac{1}{2}\delta^{2}}\nonumber\\&\times\int_{\mbb{R}^{l}_{+}}\theta(z,\mbf{x})\rho_{\mbf{q}_{1},...,\mbf{q}_{l}}(\mbf{x};\delta,y)d\mbf{x}d\delta dz+O(N^{-\delta}),
\end{align}
where $\rho_{\mbf{q}_{1},...,\mbf{q}_{l}}$ is the density of $(|\mbf{q}_{1}^{*}\mbf{p}|^{2},...,|\mbf{q}_{l}^{*}\mbf{p}|^{2})$ for
\begin{align}
    \mbf{p}&=\sqrt{1+\frac{\delta}{\sqrt{\delta^{2}+4y^{2}}}}\mbf{v}_{1}+i\sqrt{1-\frac{\delta}{\sqrt{\delta^{2}+4y^{2}}}}\mbf{v}_{2},
\end{align}
with independent standard Gaussian vectors $\mbf{v}_{1},\mbf{v}_{2}\in\mbb{R}^{N}$.
\end{enumerate}
\end{theorem}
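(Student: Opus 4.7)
The plan is to reduce to the Gauss-divisible case via \Cref{thm2} and then carry out an explicit calculation by partial Schur decomposition. By \Cref{thm2} applied with $t=N^{-\epsilon}$ for some small fixed $\epsilon>0$, it suffices to establish each formula when $A=X+\sqrt{t}Y$, with $X$ a non-Hermitian Wigner matrix (complex or real as appropriate) and $Y$ an independent Ginibre matrix. The polynomial growth hypothesis on $\theta$, together with local laws controlling $N|\mbf{q}_k^{*}\mbf{r}_{n}|^{2}$ on a high-probability event, allows us to approximate $\theta$ by a compactly supported function with five derivatives of polynomial size, which then satisfies the hypotheses of \Cref{thm2}.

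In the Gauss-divisible case, I would extend the partial Schur decomposition method of \cite{maltsev_bulk_2023,osman_bulk_2024,dubova_bulk_2024} to carry an eigenvector insertion. In the complex case one writes $A=UTU^{*}$ with $U\in U(N)$ and $T$ upper triangular, and exposes one eigenvalue $z=T_{11}$ together with its unit right eigenvector $\mbf{r}=Ue_{1}$ in the upper-left corner; then $|\mbf{q}_k^{*}\mbf{r}_{n}|^{2}=|(U^{*}\mbf{q}_k)_{1}|^{2}$ depends only on $\mbf{r}$. The conditional Gaussian density of $A$ given $X$, combined with the Jacobian of the Schur change of variables, yields an explicit representation of $\mbb{E}[\mc{L}_{\theta}]$ as an integral over $z$, $\mbf{r}$, and a remaining $(N-1)\times(N-1)$ block. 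Integrating out the block and applying the contour integral representations from \cite{osman_bulk_2024} produces the same saddle-point structure as for the one-point eigenvalue correlation function, with the integrand multiplied by $\theta(\sqrt{N}(z-z_0),N|\mbf{q}_1^{*}\mbf{r}|^{2},\ldots,N|\mbf{q}_l^{*}\mbf{r}|^{2})$. To leading order the $z$- and $\mbf{r}$-integrations factorise: the former reproduces the density $1/\pi$ and the latter, weighted by the Schur complement, concentrates on the distribution of the projections of a standard complex Gaussian vector, giving (i). Cases (ii) and (iii) are handled analogously, using $O(N)$ and a real Schur decomposition with $1\times1$ or $2\times2$ diagonal blocks respectively.

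The main obstacle is case (iv), where the density depends nontrivially on the rescaled distance $\delta$ from the real axis of the exposed eigenvalue and on its imaginary part $y$. In this regime the $2\times2$ real Schur block associated with a complex conjugate eigenvalue pair has singular values that are not uniformly separated from zero, and the right eigenvector of the original matrix is built from that block as a genuine mixture of real and imaginary parts whose proportions depend on $\delta$ and $y$. Tracking this mixture inside the saddle-point analysis requires asymptotic formulas for the odd moments of $|\det(A-z)|$ of real Gauss-divisible matrices --- precisely the byproduct mentioned in the abstract --- after which the density $\rho_{\mbf{q}_1,\ldots,\mbf{q}_l}(\mbf{x};\delta,y)$ in the statement emerges from the factorisation between the eigenvector integration and the eigenvalue saddle-point. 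The final error exponent in each case is limited by the larger of the \Cref{thm2} error and the $N^{O(\epsilon)}$ corrections coming from the Gauss-divisible approximation.
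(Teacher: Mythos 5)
Your overall strategy matches the paper's: reduce to the Gauss-divisible case via \Cref{thm2}, apply the partial Schur decomposition formulas \eqref{eq:complexPartialSchur}, \eqref{eq:real-real}, \eqref{eq:real-complex}, carry the $\theta$-insertion through the asymptotic analysis, and show that the eigenvector factor concentrates on the distribution of projections of a Gaussian vector. You also correctly identify that the right eigenvector enters only through the partial Schur coordinate ($\mbf{r}=\mbf{v}$ in the complex/real-real case, $\mbf{r}(V)$ as a $\delta,y$-dependent mixture in the real-complex case), and that case~(iv) is distinguished by the non-degenerate $(\delta,y)$-dependence when $z_{0}$ is real.

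However, you misattribute the role of the odd-moment formula \Cref{thm4}. You claim it is needed in case~(iv) to track the $2\times2$ block near the real axis. In fact \eqref{eq:real-complex} produces the factor $\mbb{E}_{Y'}[|\det B'_{z}|^{2}]$ for complex eigenvalues of real matrices, which is an \emph{even} moment and is handled by the Gaussian integral exactly as in the complex case. The odd moment $\mbb{E}_{Y'}[|\det B'_{u}|]$ only appears in \eqref{eq:real-real}, i.e.\ case~(ii), real eigenvalues of real matrices; this is where \Cref{thm4} (with $m=1$, using the supersymmetric representation of Section~4) is required. The genuine difficulty in case~(iv) is instead that the coefficients $\alpha^{2}=\tfrac{b}{b+c}$ and $\beta^{2}=\tfrac{c}{b+c}$ cannot be approximated by $1/2$ when $y=O(N^{-1/2})$, so one must retain their $(\delta,y)$-dependence through the duality-formula computation and control the resolvent $\mc{M}_{0}$ near the real axis via an alternative representation; no odd moments enter. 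You should also be more explicit about the mechanism by which $\rho_{\mbf{q}_{1},\ldots,\mbf{q}_{l}}$ emerges: the paper passes from $\mbb{E}_{\nu_{u}}[\theta(\mbf{v})]$ (resp.\ $\mbb{E}_{\mu_{z}},\,\mbb{E}_{\xi_{\delta,z}}$) to a $p$-integral via the duality formula, then applies Wick's theorem and the isotropic local law to replace quadratic forms like $t\,\mbf{q}^{T}H_{u}(w_{p})\mbf{q}$ by $\|\mbf{q}\|^{2}$ on the dominant range $|p|\lesssim\sqrt{t^{3}/N}\log N$, which is what identifies the limit as the standard Gaussian density; ``concentration'' alone does not specify this.
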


In other words, if we uniformly sample an eigenvalue from an $O(N^{-1/2})$ neighbourhood of a point $z_{0}\in\mbb{D}$, then any finite collection of components of the corresponding right eigenvector are Gaussian. This result does not cover the distribution of an individual eigenvector, since we need to take a sum in order to apply Girko's formula. It is an open question whether one can prove comparison theorems without recourse to Girko's formula, say by directly estimating the derivatives of eigenvalues and eigenvectors with respect to the matrix elements. The problem here is that these derivatives are much larger than their counterparts in the Hermitian case due to the fact that the product of the $l^{2}$ norms of left and right eigenvectors is typically large.

In proving the above results, we have to study the expectation value $\mbb{E}\left[|\det A_{u}|^{m}\right]$ for real, Gauss-divisible $A$ and $m=1$. Since the case of general $m\in\mbb{N}$ is not much more difficult we include it in the following.
\begin{theorem}\label{thm4}
Let $\epsilon>0$ be fixed, $N^{-1+\epsilon}\leq t\leq 1$ and fix $u\in(-1,1)$. Let $X$ be a real $N\times N$ matrix such that for any fixed $\delta>0$ there are constants $c_{\delta},C_{\delta}$ for which 
\begin{align}
    c_{\delta}\leq \eta\Tr{H_{u}(\eta)}& \leq C_{\delta},\\
    \eta^{3}\Tr{H^{2}_{u}(\eta)}&\geq c_{\delta},
\end{align}
for all $\eta\in[\delta t,t/\delta]$. Define
\begin{align}
    \phi_{u}&=\frac{\eta_{u}^{2}}{t}-\Tr{\log(\eta_{u}^{2}+|X_{u}|^{2})},
\end{align}
where $\eta_{u}$ is such that $t\Tr{H_{u}(\eta_{u})}=1$. Then for any fixed $m\in\mbb{N}$ we have
\begin{align}
    \mbb{E}_{Y}\left[|\det(X_{u}+\sqrt{t}Y)|^{m}\right]&=\left[1+O\left(\frac{\log^{3}N}{\sqrt{Nt}}\right)\right]d_{N,m}(u)e^{-\frac{Nm}{2}\phi_{u}},
\end{align}
where the expectation is with respect to $Y\sim GinOE(N)$,
\begin{align}
    d_{N,m}(u)&=\frac{(2\pi)^{m/2}G(1/2)}{G((m+1)/2)G(m/2+1)}\left(\frac{N}{2t^{2}\Tr{H^{2}_{u}(\eta_{u})}}\right)^{m(m-1)/4},
\end{align}
and $G(z)$ is the Barnes G function.
\end{theorem}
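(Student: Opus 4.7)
The plan is to represent $\mathbb{E}_Y[|\det(X_u + \sqrt{t}Y)|^m]$ as a finite-dimensional integral over an $m \times m$ real symmetric auxiliary matrix $Q$ and then apply the Laplace method. Writing $A_u = X_u + \sqrt{t}Y$, I would first express $|\det A_u|^m$ as a superintegral that is Gaussian in $Y$: for even $m = 2k$ via the Grassmann identity $(\det A)^{2k} = \int \prod_{i=1}^{2k} d\Psi^{(i)} d\Psi'^{(i)}\, e^{-\sum_i (\Psi^{(i)})^T A \Psi'^{(i)}}$, and for odd $m$ by combining $2\lfloor m/2 \rfloor$ Grassmann copies with a Pfaffian/bosonic representation based on $|\det A|^2 = \det \wh{B}(A)$, $\wh{B}(A) = \begin{pmatrix}0 & A \\ -A^T & 0\end{pmatrix}$, regularised by inserting $\eta I$ into the diagonal blocks and letting $\eta\to 0$ at the end. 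Evaluating $\mathbb{E}_Y[\,\cdot\,]$ explicitly produces a quartic coupling between the auxiliary variables through the $Y$-covariance, which I would linearise by a Hubbard--Stratonovich transformation introducing $Q$; integrating out the remaining Grassmann/bosonic variables then leaves an integral of the form $\mathbb{E}_Y[|\det A_u|^m] = c_{N,m}\int e^{-NS(Q)}\,dQ$ with an effective action of the schematic form $S(Q) = \tfrac{1}{2t}\Tr{Q^2} - \Tr{\log L_Q}$, where $L_Q$ is linear in $Q$ and depends on $|X_u|^2$.

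The stationarity equation $\partial S/\partial Q = 0$ reduces to $t\Tr{H_u(\eta_u)} = 1$, so the saddle is $Q_{*} = \eta_u I_m$ and $N S(Q_{*}) = \tfrac{Nm}{2}\phi_u$, producing the exponential factor $e^{-Nm\phi_u/2}$. For the Gaussian fluctuations I would diagonalise $Q = O D O^T$ with $O \in O(m)$ and $D = \diag(x_1, \ldots, x_m)$: the Haar integral over $O(m)$ contributes a constant, the change of variables supplies the Vandermonde $\prod_{i<j}|x_i - x_j|$, and a second-order expansion in $x_i - \eta_u$ gives a quadratic form with Hessian proportional to $\Tr{H_u^2(\eta_u)}$. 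Rescaling each $x_i - \eta_u$ by the natural Gaussian width $(t^2\Tr{H_u^2(\eta_u)}/N)^{1/2}$ converts the Vandermonde into the prefactor $\bigl(N/(2t^2\Tr{H_u^2(\eta_u)})\bigr)^{m(m-1)/4}$, and the rescaled finite integral is a $\beta = 1$ Selberg integral whose Mehta evaluation is the Barnes $G$-ratio $(2\pi)^{m/2} G(1/2)/(G((m+1)/2) G(m/2+1))$, completing $d_{N,m}(u)$.

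For the error estimate I would expand $S$ to fourth order and bound the cubic and quartic terms; the hypotheses $c_\delta \leq \eta\Tr{H_u(\eta)} \leq C_\delta$ and $\eta^3\Tr{H_u^2(\eta)} \geq c_\delta$, together with the operator bound $\|H_u(\eta_u)\| \leq \eta_u^{-2}$, give $\Tr{H_u^k(\eta_u)} = O(\eta_u^{-(2k-1)})$ uniformly for small $k$, which suffices to bound these corrections by $O(\log^3 N/\sqrt{Nt})$ (the $\log^3 N$ arising from extending the Laplace integral beyond its Gaussian core). The hardest step I expect is the integral representation in the odd-$m$ case---in particular $m = 1$, the case actually needed for \Cref{thm3}---since the natural bosonic representation of $|\det A|$ diverges and must be regularised in a way compatible with both the Hubbard--Stratonovich step and the subsequent saddle-point analysis; a second subtle point is verifying non-degeneracy of the Hessian uniformly in $X$, for which the lower bound $\eta^3\Tr{H_u^2(\eta)} \geq c_\delta$ is exactly what is required.
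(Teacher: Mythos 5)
Your strategy is in the same family as the paper's (Grassmann integrals for determinant powers, Gaussian average over $Y$, Hubbard--Stratonovich, Laplace method with a Selberg integral), but several structural choices in your sketch are wrong and you have not resolved precisely the step you flag as hardest, so as written the proof does not go through.

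First, the odd-$m$ representation. The paper's trick is
\begin{align*}
  |\det A_u|^{2m-1}
  &=\lim_{\eta\to0}\frac{|\det A_u|^{2m}}{\det^{1/2}\bigl(|A_u|^{2}+\eta^{2}\bigr)},
\end{align*}
where the numerator is a Grassmann integral over $2m$ pairs and the denominator is an ordinary Gaussian integral over a single vector $\mathbf{x}\in\mathbb{R}^{N}$. Your alternative based on $\mathrm{pf}\,\widehat{B}(A)$ with $\eta I$ inserted on the diagonal does not work: $\mathrm{pf}\,\widehat{B}(A)=\pm\det A$ (not $|\det A|$, so the sign survives and cancellation can occur), and adding $\eta I$ to the diagonal blocks of $\widehat{B}(A)$ destroys antisymmetry so the Pfaffian representation no longer applies. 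You correctly identify the odd-$m$ representation as the crux, but you have not produced a representation that is both nonnegative and compatible with the Gaussian $Y$-average; the paper's $\det^{-1/2}$ trick is exactly what provides this.

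Second, the Hubbard--Stratonovich variable is not an $m\times m$ real symmetric matrix. Because the quartic coupling that the $Y$-average produces is in the Grassmann bilinears $\psi_j^*\bar\psi_k$, the HS field that decouples it is a $2m\times2m$ \emph{complex skew-symmetric} matrix $Q\in\mathbb{M}^{\mathrm{skew}}_{2m}(\mathbb{C})$. Its singular-value decomposition yields $m$ singular values $\sigma_1,\dots,\sigma_m$ with Jacobian proportional to $\Delta^{4}(\boldsymbol\sigma^{2})\prod_j\sigma_j$, i.e.\ a $\beta=4$-type Vandermonde in $m$ variables, not the $\beta=1$ Vandermonde in $m$ variables you describe. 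The Selberg evaluation that actually appears produces $\prod_{j=1}^{m-1}(2j)!$, and the Barnes $G$-ratio in the theorem statement is obtained by rewriting this product via the reflection/duplication identities and then extending to general (in particular odd) $m$; it is not a direct $\beta=1$ Mehta integral.

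Third, and most importantly for the error analysis, after integrating out the Grassmann variables there remains the bosonic $\mathbf{x}$-integral from the $\det^{-1/2}$ regularisation. After the radial decomposition $\mathbf{x}=\sqrt{Nr/t}\,\mathbf{v}$, one must do a genuinely separate Laplace analysis in $r$ and a concentration argument for the quadratic form $\mathbf{v}^{T}H_{u}\mathbf{v}$ over $S^{N-1}(\mathbb{R})$ (via the spherical-integral duality formula), including ruling out contributions from the regions $r\ll t/(\eta_u^2+\|X_u\|^2)$ and $r\lesssim Nt/\log^2N$ before restricting to the saddle. This is where the $\log^3 N/\sqrt{Nt}$ error really comes from; it is not merely the tail truncation of a single $Q$-Gaussian. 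Your plan collapses everything to a single $\int e^{-NS(Q)}dQ$ and so cannot capture this part of the argument. If you repair the odd-$m$ representation along the paper's lines, the extra $\mathbf{x}$-integral will reappear automatically and will need a dedicated treatment.
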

It follows from the local law that non-Hermitian Wigner matrices satisfy the conditions of the above theorem with probability $1-N^{-D}$ for any $D>0$. We can also take $t=1$ and $X=0$ to obtain the asymptotics of the GinOE; in this case we have $\eta_{u}^{2}=1-u^{2}$ and $t^{2}\Tr{H^{2}_{u}}=1$. This gives us the odd integer case of \cite[Conjecture 5.9]{serebryakov_schur_2023}:
\begin{align}
    \mbb{E}_{Y}\left[|\det(Y-u)|^{m}\right]&=\left[1+O\left(\frac{\log^{3}N}{\sqrt{N}}\right)\right]\frac{(2\pi)^{m/2}G(1/2)}{G((m+1)/2)G(m/2+1)}\nonumber\\
    &\times\left(\frac{N}{2}\right)^{m(m-1)/4}e^{-\frac{Nm}{2}(1-u^{2})},\quad m\in\mbb{N}.
\end{align}
The conjecture is for any $m$ in a compact subset of $(-1,\infty)$, but non-integer values are beyond the reach of the supersymmetry method which we employ here.

The rest of the paper is organised as follows. In \Cref{sec3} we collect some existing results. In \Cref{sec4} we prove \Cref{thm4}. In \Cref{sec5} we prove \Cref{thm1}, up to the proof of \Cref{lem:GaussDivisible} which we defer to \Cref{sec6}. In \Cref{sec7} we prove \Cref{thm2} and in \Cref{sec8} we prove \Cref{thm3}.

\section{Preliminaries}\label{sec3}
We record here some existing results which we will need for our arguments. Throughout this section $A$ is a non-Hermitian Wigner matrix. We also make use of the concept of stochastic domination: $X\prec Y$ if for any $\xi,D>0$ we have $|X|\leq N^{\xi}|Y|$ for sufficiently large $N>N(\xi,D)$.

First, we need a bound on the operator norm, which follows from the local law for sample covariance matrices in \cite[Theorem 3.7]{knowles_anisotropic_2017}.
\begin{proposition}
There is a constant $C$ such that
\begin{align}
    \|A\|\prec C.
\end{align}
\end{proposition}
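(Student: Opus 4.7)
The plan is to reduce the bound to the local law for sample covariance matrices cited from Knowles--Yin. Since $\|A\|^{2}=\lambda_{\max}(A^{*}A)$, and by \eqref{cond1}--\eqref{cond3} the entries $a_{jk}$ have mean zero, variance $1/N$, and bounded moments of all orders after rescaling by $\sqrt{N}$, the positive-semidefinite matrix $A^{*}A$ is a sample covariance matrix with aspect ratio $1$ falling into the framework of \cite[Theorem 3.7]{knowles_anisotropic_2017}. Its empirical spectral distribution converges to the Marchenko--Pastur law on $[0,4]$, so one expects $\|A\|$ to concentrate near $2$.

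Combining the anisotropic local law with standard edge rigidity (selecting the spectral parameter $w=E+i\eta$ with $E$ just above $4$ and $\eta=N^{-2/3+\xi}$, and using that the limiting density behaves like $\sqrt{(4-E)_{+}}$ near the edge) I would obtain, for any fixed $\xi,D>0$,
\begin{align*}
    P\left(\lambda_{\max}(A^{*}A) > 4 + N^{-2/3+\xi}\right)&\leq N^{-D}.
\end{align*}
For the present proposition it is enough to extract the far weaker consequence $\lambda_{\max}(A^{*}A)\leq 5$ with probability $1-N^{-D}$. On this event $\|A\|\leq \sqrt{5}$, so taking any constant $C\geq\sqrt{5}$ and appealing to the definition of stochastic domination given in the section preamble yields $\|A\|\prec C$.

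The main obstacle, such as it is, amounts to verifying that the moment condition \eqref{cond3} of \Cref{def1} implies the input hypothesis required by Knowles--Yin, namely that the rescaled entries $\sqrt{N}a_{jk}$ have uniformly bounded moments of every order. This is immediate from the statement of \eqref{cond3} since $C_p$ is allowed to depend on $p$. No further analysis is required beyond invoking the cited local law.
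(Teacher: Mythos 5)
Your proposal is correct and takes essentially the same route as the paper, which simply cites the sample covariance local law of Knowles--Yin; you have filled in the standard chain of reasoning (local law $\Rightarrow$ no eigenvalues of $A^{*}A$ beyond the soft edge at $4$ $\Rightarrow$ $\|A\|\leq\sqrt{4+o(1)}$ with overwhelming probability) that the paper leaves implicit. The verification that the moment condition \eqref{cond3} supplies the bounded-moment input required by the cited theorem is also exactly what is needed.
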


We have already mentioned the following tail bound for the least singular value, which follows from \cite[Theorem 3.2]{tao_smooth_2010}.
\begin{proposition}[Theorem 3.2 in \cite{tao_smooth_2010}]\label{prop:leastSV}
For any fixed $z\in\mbb{D}$, $A>0$ and $\xi>0$ there is a constant $C$ such that
\begin{align}
    P(s_{N}(z)<N^{-A-1})&\leq CN^{\xi-A},
\end{align}
for sufficiently large $N>N(z,A,\xi)$.
\end{proposition}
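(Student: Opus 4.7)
The plan is to deduce this directly from Theorem 3.2 of Tao--Vu \cite{tao_smooth_2010}. That result gives a bound of the form $P(s_N(M - z) \le N^{-A-1}) \le C N^{\xi - A}$ for any fixed $z \in \mathbb{C}$ and any $N \times N$ matrix $M$ whose entries are independent, mean-zero, have variance $1/N$, and have enough higher moments uniformly bounded. Our non-Hermitian Wigner matrix $A$ satisfies exactly these hypotheses by \eqref{cond1}--\eqref{cond3}, so no extra work is required beyond checking that the constants in Tao--Vu depend only on the moment bounds $C_p$ and on the fixed parameters $z, A, \xi$.

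For completeness, I would summarise the Tao--Vu strategy as follows. Using $s_N(A-z) = \min_{\|v\|_2 = 1} \|(A-z)v\|_2$, decompose the unit sphere into \emph{compressible} vectors (well-approximated by sparse ones) and \emph{incompressible} vectors. For compressible $v$, a covering/net argument and concentration of $\|(A-z)v\|_2$ (using only a bounded second moment on the entries) gives an estimate much stronger than needed after a union bound. The incompressible case is reduced, via the geometric inequality
\begin{align*}
s_N(A-z) \;\ge\; \frac{1}{\sqrt{N}} \min_{1 \le i \le N} \mathrm{dist}(R_i, H_i),
\end{align*}
where $R_i$ is the $i$-th row of $A-z$ and $H_i$ is the span of the remaining rows, to bounding $\mathrm{dist}(R_i, H_i)$ from below. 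Conditionally on $H_i$, pick a unit normal $n_i$; then $\mathrm{dist}(R_i, H_i) = |R_i \cdot n_i|$, and $R_i$ is independent of $n_i$.

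The main obstacle, and the step that produces the $N^\xi$ slack in the bound, is a Littlewood--Offord/small-ball estimate for $|R_i \cdot n_i|$ when $n_i$ is incompressible and only the second moment of the entries of $R_i$ is assumed bounded. Tao--Vu handle this by a relatively soft argument that trades a power of $N^\xi$ in the concentration bound for a weak anti-concentration statement of the form $P(|R_i \cdot n_i| \le t \mid n_i) \le C(t + N^{-A-1/2})$ on a high-probability event for $n_i$, after which a union bound over $i$ finishes the proof. Because we only need the statement as a black box to start the resolvent comparison in \Cref{thm1}, we do not repeat the argument and simply invoke \cite[Theorem 3.2]{tao_smooth_2010}.
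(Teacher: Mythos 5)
This matches the paper exactly: the proposition is not reproved there either, but is stated with a citation to \cite[Theorem 3.2]{tao_smooth_2010}, which applies after the trivial rescaling $A\mapsto\sqrt{N}A$ to normalise the entry variance to $1$ (taking $M_n=-\sqrt{N}z\cdot 1_N$ with $\|M_n\|\leq N^{1/2}$, i.e.\ $\gamma=1/2$, and then $s_N(A-z)=N^{-1/2}s_N(\sqrt{N}A-\sqrt{N}z)$). The expository sketch you append leans on Rudelson--Vershynin's compressible/incompressible terminology rather than Tao--Vu's own inverse Littlewood--Offord framework, but since you correctly invoke the theorem as a black box this is immaterial to the correctness of the proof.
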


Next we need one- and two-resolvent local laws for the Hermitisation of $A-z$. The deterministic approximation to a single resolvent is given by
\begin{align}
    M_{z}(w)&=\begin{pmatrix}m_{z}(w)&-zu_{z}(w)\\-\bar{z}u_{z}(w)&m_{z}(w)\end{pmatrix}
\end{align}
where
\begin{align}
    u_{z}(w)&=\frac{m_{z}(w)}{w+m_{z}(w)}
\end{align}
and $m_{z}(w)$ is the unique solution to
\begin{align}
    -\frac{1}{m_{z}}&=w+m_{z}-\frac{|z|^{2}}{w+m_{z}},\quad \Im w\cdot\Im m_{z}>0.
\end{align}
Let $S:\mbb{M}_{2n}\to\mbb{M}_{2n}$ and $\mc{B}_{z_{1},z_{2}}(z_{1},z_{2}):\mbb{M}_{2n}\to\mbb{M}_{2n}$ be defined by
\begin{align}
    S\left[\begin{pmatrix}A&B\\C&D\end{pmatrix}\right]&=\begin{pmatrix}\Tr{D}&0\\0&\Tr{A}\end{pmatrix},\\
    \mc{B}_{z_{1},z_{2}}(w_{1},w_{2})[F]&=1-M_{z_{1}}(w_{1})S\left[F\right]M_{z_{2}}(w_{2}).
\end{align}
The deterministic approximation to $G_{z_{1}}(w_{1})FG_{z_{2}}(w_{2})$ is given by 
\begin{align}
    M_{z_{1},z_{2}}(w_{1},w_{2},F)&=(\mc{B}_{z_{1},z_{2}}(w_{1},w_{2}))^{-1}\left[M_{z_{1}}(w_{1})FM_{z_{2}}(w_{2})\right].
\end{align}
\begin{proposition}[Theorem 2.6 in \cite{cipolloni_optimal_2023}, Theorem 5.2 in \cite{cipolloni_central_2023} and Theorem 3.3 in \cite{cipolloni_mesoscopic_2023}]\label{prop:ll1}
Let $\epsilon>0$, $z_{1},z_{2}\in\mbb{C}$, $\mbf{x},\mbf{y}\in S^{N-1}$, $F_{j}\in\mbb{M}_{N}$, $w\in\mbb{C}$ such that $|\Re w|<1$ and $\eta,\sigma\in\mbb{R}$ such that $\eta_{*}=\min(|\eta|,|\sigma|,|\Im w|)>N^{-1+2\epsilon}$. Then
\begin{align}
    \left|\Tr{\left[G_{z_{1}}(w)-M_{z_{1}}(w)\right]F_{1}}\right|&\prec\frac{1}{N\eta_{*}},\\
    \left|\langle\mbf{x},\left[G_{z_{1}}(w)-M_{z_{1}}(w)\right]\mbf{y}\rangle\right|&\prec\frac{1}{\sqrt{N\eta_{*}}},
\end{align}
and
\begin{align}
    \left|\Tr{\left[G_{z_{1}}(i\eta)F_{1}G_{z_{2}}(i\sigma)-M_{z_{1},z_{2}}(i\eta,i\sigma,F_{1})\right]F_{2}}\right|&\prec\frac{N^{\epsilon}}{N\eta_{*}^{2}}.
\end{align}
\end{proposition}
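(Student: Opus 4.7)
My plan is to prove the three estimates via the standard matrix Dyson equation (MDE) approach, following the programme developed by Erd\H{o}s, Schr\"oder, Cipolloni and collaborators. First I would establish stability of the MDE satisfied by $M_{z}$: the linearised operator $\mathcal{B}_{z,z}(w,w) = 1 - M_{z}(w)\, S[\,\cdot\,]\, M_{z}(w)$ is invertible on the block trace subspace, with a bulk bound of the form $\|\mathcal{B}^{-1}\| \lesssim 1/\eta_{*}$ away from the edge. This is extracted from the explicit form of $M_{z}$ and of the self-consistent equation for $m_{z}$, together with the fact that $|z| < 1$ places us in the bulk.

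For the single-resolvent averaged estimate, I would apply a cumulant expansion to $\mathbb{E}[\langle (G - M) F \rangle]$. Second-order cumulants reproduce the MDE at leading order; higher cumulants, combined with stability of $\mathcal{B}$ and a polynomial-moment bootstrap, contribute errors of size $1/(N\eta_{*})$. The isotropic bound is proved along the same lines applied to $\langle \mathbf{x}, (G-M)\mathbf{y} \rangle$; the improvement from $1/(N\eta_{*})$ to $1/\sqrt{N\eta_{*}}$ is the standard $\sqrt{N}$ loss incurred when one uses the Ward identity $G G^{*} = \Im G / \eta$ once rather than twice.

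For the two-resolvent estimate I would either expand $\mathbb{E}[\langle G_{z_{1}}(i\eta) F_{1} G_{z_{2}}(i\sigma) F_{2}\rangle]$ by cumulants and invert the two-sided stability operator $\mathcal{B}_{z_{1},z_{2}}(i\eta,i\sigma)$, or run the ``zigzag'' strategy that alternates between characteristic flow and GFT steps to propagate the one-resolvent law to products. The scaling $1/(N\eta_{*}^{2})$ arises as one factor of $\eta_{*}^{-1}$ from the Ward identity $|G|^{2} \sim \Im G/\eta$ in variance estimates for the product, and a second factor of $\eta_{*}^{-1}$ from the norm of $\mathcal{B}_{z_{1},z_{2}}^{-1}$ in the bulk. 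The small buffer $N^{\epsilon}$ absorbs the polynomial-moment constants.

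The main obstacle is the two-resolvent bound: closing the polynomial moment hierarchy generates terms such as $\langle G_{1} F_{1} G_{2} F_{2} G_{1} F_{1} G_{2} F_{2} \rangle$ which naively scale as $\eta_{*}^{-3}$, and one must uncover a hidden cancellation through $\mathcal{B}_{z_{1},z_{2}}^{-1}$ to recover one power of $\eta_{*}$. Establishing a quantitative spectral gap for $\mathcal{B}_{z_{1},z_{2}}$ uniformly as $z_{1},z_{2}$ and $\eta,\sigma$ vary within the allowed mesoscopic regime, together with the identification of the associated slow direction coming from the block structure of $M_{z}$, is precisely the analytic heart of the cited works and is the step I expect to be genuinely hard; the other two estimates are then essentially corollaries of the general MDE machinery once this operator-theoretic input is in hand.
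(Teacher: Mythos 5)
This proposition is not proved in the paper at all: the author simply cites three external results (Theorem 2.6 of \cite{cipolloni_optimal_2023}, Theorem 5.2 of \cite{cipolloni_central_2023}, Theorem 3.3 of \cite{cipolloni_mesoscopic_2023}) and records, in a short remark immediately after, the only non-trivial book-keeping that goes into the statement. That book-keeping is: the two-resolvent bound as stated holds for \emph{arbitrary} $z_{1},z_{2}\in\mbb{C}$, and to get it one must patch together two different theorems --- Theorem 5.2 of \cite{cipolloni_central_2023}, which allows general $z_{1},z_{2}$ but carries an extra factor of $\|\mc{B}_{z_{1},z_{2}}^{-1}\|\sim(|z_{1}-z_{2}|^{2}+\eta+\sigma)^{-1}$, and Theorem 3.3 of \cite{cipolloni_mesoscopic_2023}, which drops that factor but only applies when $|z_{1}-z_{2}|<N^{-\epsilon}$ --- together with the observation that the identically-distributed hypothesis in the latter two references can be relaxed to matched variances. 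Your proposal instead sketches a from-scratch derivation of the local laws themselves, which is not what the paper does and is far beyond the scope of a proposition that the paper treats as an external input.

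Beyond the mismatch in scope, there is a concrete inaccuracy in your account of the two-resolvent estimate. You attribute one factor of $\eta_{*}^{-1}$ to ``the norm of $\mc{B}_{z_{1},z_{2}}^{-1}$ in the bulk.'' But $\|\mc{B}_{z_{1},z_{2}}^{-1}\|$ scales like $(|z_{1}-z_{2}|^{2}+\eta+\sigma)^{-1}$, which equals $\eta_{*}^{-1}$ only in the degenerate regime $z_{1}\approx z_{2}$; for well-separated $z_{1},z_{2}$ this operator is $O(1)$, and the $\eta_{*}^{-2}$ in the error bound then does \emph{not} come from $\mc{B}^{-1}$ but is simply a (wasteful) upper bound on the $\|\mc{B}^{-1}\|/(N\eta_{*})$ error of \cite[Theorem 5.2]{cipolloni_central_2023}. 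Conversely, when $z_{1}$ and $z_{2}$ are close, that theorem would yield the much worse $|z_{1}-z_{2}|^{-2}/(N\eta_{*})$, and one needs the separate mesoscopic input of \cite{cipolloni_mesoscopic_2023} to recover $N^{\epsilon}/(N\eta_{*}^{2})$. Reconciling these two regimes is precisely the small piece of argument the paper supplies, and your sketch does not address it.
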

We have combined the statement of \cite[Theorem 5.2]{cipolloni_central_2023}, which is valid for any $z_{1},z_{2}\in\mbb{C}$ but contains an extra factor of $\|\mc{B}_{z_{1},z_{2}}^{-1}\|\sim(|z_{1}-z_{2}|^{2}+\eta+\sigma)^{-1}$ in the error term, with the statement of \cite[Theorem 3.3]{cipolloni_mesoscopic_2023}, which is valid for $|z_{1}-z_{2}|<N^{-\epsilon}$ and does not contain this extra factor. Note also that in \cite[Theorem 5.2]{cipolloni_central_2023} and \cite[Theorem 3.3]{cipolloni_mesoscopic_2023} the matrix $A$ is assumed to have identically distributed entries, but it is enough that the entries have the same variance $1/N$.

We will also make use of the improved local law when taking products of resolvents with matrices whose diagonal blocks are zero.
\begin{proposition}[\cite{cipolloni_optimal_2023} Theorems 4.3 and 4.4]\label{prop:ll2}
Let $A$ be a non-Hermitian Wigner matrix and $z\in\mbb{D}$. Let $\epsilon>0$, $F_{j}\in\{E_{2,N},E^{*}_{2,N}\}$, $\mbf{x},\mbf{y}\in S^{N-1}$, $w_{j}\in\mbb{C}$ such that $|\Re w_{j}|<1$ and $\eta=\min_{j}|\Im w_{j}|>N^{-1+\epsilon}$. Then
\begin{align}
    \left|\Tr{\left[G_{z}(w_{1})-M_{z}(w_{1})\right]F_{1}}\right|&\prec\frac{1}{N\eta^{1/2}},\\
    \left|\langle\mbf{x},G_{z}(w_{1})F_{1}G_{z}(w_{2})\mbf{y}\rangle\right|&\prec 1+\frac{1}{\sqrt{N\eta^{2}}},
\end{align}
and
\begin{align}
    \left|\Tr{\left[G_{z}(w_{1})F_{1}G_{z}(w_{2})-M_{z,z}(w_{1},w_{2},F_{1})\right]F_{2}}\right|&\prec\frac{1}{\sqrt{N\eta}},\\
    \left|\langle\mbf{x},G_{z}(w_{1})F_{1}G_{z}(w_{2})F_{2}G_{z}(w_{3})\mbf{y}\rangle\right|&\prec\frac{1}{\eta}.
\end{align}
\end{proposition}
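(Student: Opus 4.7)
The strategy I would follow is the master-equation / zig-zag approach of Cipolloni--Erdős--Schröder, specialised to exploit the fact that the insertion matrices $F\in\{E_{2,N},E_{2,N}^{*}\}$ are strictly off-diagonal in the $2\times 2$ block structure of $\mc{H}_{z}$. The single algebraic observation that drives every improvement over \Cref{prop:ll1} is that $S[\cdot]$ projects onto the diagonal blocks while $M_{z}(w)$ is block-diagonal up to the $z$-twist, so that $\langle M_{z}(w_{1})S[X]M_{z}(w_{2})F\rangle$ vanishes for any $X$; equivalently, the off-diagonal $F$ lies in the \emph{stable} subspace of the linearised stability operator $\mc{B}_{z,z}(w_{1},w_{2})$, eliminating the $(|z_{1}-z_{2}|^{2}+\eta)^{-1}$ instability that is the source of the unimproved $1/(N\eta)$ and $1/(N\eta^{2})$ rates in \Cref{prop:ll1}.

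First I would prove the single-resolvent averaged bound. Starting from the master equation
\begin{align*}
G_{z}(w)+M_{z}(w)&=-M_{z}(w)S[G_{z}(w)-M_{z}(w)]G_{z}(w)+M_{z}(w)\mc{D}(G_{z}(w))
\end{align*}
obtained by a cumulant expansion of $\mbb{E}[\mc{H}_{z}G_{z}(w)]$, I would take the normalised trace against $F$ and observe that the would-be leading contribution $\langle M_{z}S[G-M]M_{z}F\rangle$ vanishes by the projection identity above. Plugging in the a priori bound $\langle(G-M)F\rangle\prec(N\eta)^{-1}$ from \Cref{prop:ll1} and estimating the fluctuation $\langle M\mc{D}(G)F\rangle$ by a standard high-moment cumulant scheme then produces the square-root improvement $(N\eta^{1/2})^{-1}$. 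For the isotropic two-resolvent bound, I would combine the resolvent commutator identity
\begin{align*}
(w_{1}-w_{2})G_{z}(w_{1})FG_{z}(w_{2})&=G_{z}(w_{1})F-FG_{z}(w_{2})+G_{z}(w_{1})[\mc{H}_{z},F]G_{z}(w_{2}),
\end{align*}
in which $[\mc{H}_{z},F]$ is block-diagonal with entries controlled by $\|A_{z}\|\prec 1$, with the isotropic bound of \Cref{prop:ll1} at a single spectral parameter and a dyadic decomposition in $|w_{1}-w_{2}|$. This converts the naive $\eta^{-2}$ into the claimed $1+(N\eta^{2})^{-1/2}$; the three-resolvent isotropic bound then follows by telescoping $GF_{1}GF_{2}G$ into two two-resolvent chains and using the Ward identity $GG^{*}=\eta^{-1}\Im G$ together with Cauchy--Schwarz.

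The main obstacle is the averaged two-resolvent bound $|\langle(G_{z}(w_{1})F_{1}G_{z}(w_{2})-M_{z,z}(w_{1},w_{2},F_{1}))F_{2}\rangle|\prec(N\eta)^{-1/2}$. I would derive it from a second-order master equation obtained by iterating the first one, with the quadratic self-energy producing a closed equation for $\langle GF_{1}GF_{2}\rangle$ whose solvability is controlled by a reduced stability operator acting on the subspace of off-diagonal block insertions. The delicate point is to verify that this reduced operator is invertible with $O(1)$ norm uniformly in $\eta\downarrow N^{-1+\epsilon}$ and to track the off-diagonal cancellations through the cubic and quartic cumulant contributions inside a high-moment stochastic-domination bootstrap; everything else is essentially a repetition, with two spectral parameters, of the scheme already used for the single-resolvent law.
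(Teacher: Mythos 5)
This proposition is cited verbatim from Theorems~4.3 and~4.4 of \cite{cipolloni_optimal_2023}; the paper gives no proof of its own, so there is no internal argument to compare against. On the merits of your sketch: you have the right high-level picture (self-consistent master equation, stability operator, high-moment bootstrap), which is indeed the technology of the cited paper, but the algebraic claim on which you rest the entire improvement is false. You assert that $\langle M_{z}(w_{1})S[X]M_{z}(w_{2})F\rangle$ vanishes for every $X$. Take $F=E_{2,N}$: this trace extracts the $(1,2)$-block of $M_{z}(w_{1})S[X]M_{z}(w_{2})$, which equals $-z\left(m_{z}(w_{1})\Tr{X_{22}}u_{z}(w_{2})+u_{z}(w_{1})\Tr{X_{11}}m_{z}(w_{2})\right)$ and is generically nonzero for $z\neq 0$, precisely because the off-diagonal $-zu_{z}$ entries of $M_{z}$ couple the two blocks. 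The correct structural fact you want is much simpler: $S[F]=0$ for any strictly off-block-diagonal $F$, hence $\mc{B}_{z,z}(w_{1},w_{2})[F]=F$. That is a statement about $\mc{B}$ acting on $F$ as an argument, not about the trace pairing you wrote, and the two are not equivalent.

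Moreover, the quantity actually controlled by the master equation is $\Tr{(G-M)F}$, whose linearisation is governed by the \emph{adjoint} operator $\mc{B}^{*}[F']=F'-S[M_{z}F'M_{z}]$; this does \emph{not} fix $E_{2,N}$ when $z\neq 0$, since $S[M_{z}E_{2,N}M_{z}]=-zm_{z}u_{z}\cdot 1_{2N}$ is a nonzero diagonal contribution. Extracting the gain therefore requires the observable decomposition and the iterate-and-resum scheme carried out at length in the cited paper, not the one-line vanishing you propose. Your commutator identity for $G_{z}(w_{1})FG_{z}(w_{2})$ is correct, and $[\mc{H}_{z},F]=\diag(A_{z},-A_{z})$ is indeed block-diagonal with operator norm $O(1)$, but the resulting term still contains two resolvents with a diagonal insertion, for which \Cref{prop:ll1} only gives the unimproved rates; a dyadic decomposition in $|w_{1}-w_{2}|$ alone does not close the claimed $1+(N\eta^{2})^{-1/2}$ bound, and the averaged two-resolvent estimate needs the full multi-step analysis the stability claim was supposed to set up. As written, the sketch does not constitute a proof.
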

The condition $|\Re w|<1$ is made to ensure that $\Re w$ is well inside the bulk of the singular value distribution; we could replace 1 with any small fixed constant.

A standard consequence of the isotropic local law is delocalisation for singular vectors. As observed in \cite[Theorem 2.7]{cipolloni_optimal_2023}, the improved local law also implies a bound on the overlaps between singular vectors. We collect these statements in the following proposition.
\begin{proposition}[Theorem 2.7 in \cite{cipolloni_optimal_2023}]\label{prop:deloc}
Let $\mbf{q}\in S^{N-1}$. We have
\begin{align}
    \max_{s_{n}(z)<1}|\mbf{q}^{*}\mbf{u}_{n}|&\prec N^{-1/2},\label{eq:deloc1}\\
    \max_{s_{n}(z)<1}|\mbf{q}^{*}\mbf{v}_{n}|&\prec N^{-1/2},\label{eq:deloc2}\\
    \max_{s_{n}(z),s_{m}(z)<1}|\mbf{u}_{n}^{*}\mbf{v}_{m}|&\prec N^{-1/2}\label{eq:singularOverlap}.
\end{align}
\end{proposition}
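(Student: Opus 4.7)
The plan is to extract each of the three bounds from the spectral expansion of $G_{z}(w)$ combined with the one- and two-resolvent local laws of \Cref{prop:ll1} and \Cref{prop:ll2}. For \eqref{eq:deloc1} and \eqref{eq:deloc2}, I would run the standard Wegner-type delocalisation argument: embed $\mbf{q}\in S^{N-1}$ as $\hat{\mbf{q}}=(\mbf{q},0)^{T}\in\mbb{C}^{2N}$, expand the $(1,1)$ block of $G_{z}(E+i\eta)$ spectrally, and note that $\Im\langle\hat{\mbf{q}},G_{z}(E+i\eta)\hat{\mbf{q}}\rangle$ is a positive combination of $|\mbf{q}^{*}\mbf{u}_{m}|^{2}$ whose $m=n$ summand at $E=s_{n}$ is bounded below by a constant multiple of $|\mbf{q}^{*}\mbf{u}_{n}|^{2}/\eta$. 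Since \Cref{prop:ll1} controls the full quadratic form by $\Im m_{z}(E+i\eta)+o(1)=O(1)$ with stochastic domination at $\eta=N^{-1+\xi}$ and $|E|<1$, we conclude $|\mbf{q}^{*}\mbf{u}_{n}|^{2}\prec N^{-1}$, and the same argument applied to $(0,\mbf{q})^{T}$ together with the $(2,2)$ block gives \eqref{eq:deloc2}.

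For the cross overlap \eqref{eq:singularOverlap}, a single resolvent no longer suffices because bilinear forms in $\hat{\mbf{q}}$ only couple to diagonal spectral weights. Instead I would exploit the two-resolvent trace
\begin{align*}
\Tr{G_{z}(i\eta_{1})E_{2,N}G_{z}(-i\eta_{2})E_{2,N}^{*}}\;=\;\frac{1}{N}\sum_{n,m}\frac{\eta_{1}\eta_{2}\,|\mbf{u}_{n}^{*}\mbf{v}_{m}|^{2}}{(s_{m}^{2}+\eta_{1}^{2})(s_{n}^{2}+\eta_{2}^{2})},
\end{align*}
whose spectral expansion follows from the block decomposition and the spectral formulas for $G_z$. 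The deterministic approximation $\Tr{M_{z,z}(i\eta_{1},-i\eta_{2},E_{2,N})E_{2,N}^{*}}$ is $O(1)$ uniformly in the relevant range, and \Cref{prop:ll2} furnishes the sharp stochastic error $(N\eta)^{-1/2}$ so that the total trace is $\prec 1$. For any pair with $s_{n},s_{m}\leq\eta_{1}=\eta_{2}=N^{-1+\xi}$ the spectral weight is $\gtrsim\eta^{-2}$, hence the single $(n,m)$ term contributes at least $|\mbf{u}_{n}^{*}\mbf{v}_{m}|^{2}/(N\eta^{2})$, forcing $|\mbf{u}_{n}^{*}\mbf{v}_{m}|^{2}\prec N\eta^{2}=N^{-1+2\xi}$; arbitrariness of $\xi$ then yields $\prec N^{-1}$. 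To reach pairs with larger bulk singular values one shifts $w_{j}=E_{j}+i\sigma_{j}\eta$ with $E_{j}$ tracking the appropriate singular value and $|E_{j}|<1$, running the same isolation argument.

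The main obstacle is \eqref{eq:singularOverlap}. The delocalisation bounds (i) and (ii) are essentially immediate consequences of the isotropic one-resolvent law, but the cross overlap depends fundamentally on the improved two-resolvent law for observables with vanishing diagonal blocks, in which the error scales as $(N\eta)^{-1/2}$ rather than the generic $(N\eta^{2})^{-1}$ one gets from \Cref{prop:ll1}. This improvement is the non-trivial content of \Cref{prop:ll2} and of Theorem~2.7 in \cite{cipolloni_optimal_2023}, and without it the trace bound would only control the sum on a scale insufficient to isolate a single overlap. Granting that improved law, the rest of the proof is bookkeeping: one checks the positivity of the spectral weights, confirms that $M_{z,z}(i\eta_{1},-i\eta_{2},E_{2,N})$ remains $O(1)$ across the parameter ranges required, and verifies that the shifts in $(E_{1},E_{2})$ needed to cover all $s_{n},s_{m}<1$ stay in the bulk regime where \Cref{prop:ll2} applies.
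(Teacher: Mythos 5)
The paper does not supply its own proof of this proposition; it imports it verbatim from \cite[Theorem~2.7]{cipolloni_optimal_2023}, so there is no internal argument to compare against. Your sketch does capture the right mechanism: \eqref{eq:deloc1}, \eqref{eq:deloc2} are routine Wegner-type delocalisation from the isotropic one-resolvent law, while \eqref{eq:singularOverlap} genuinely needs the improved two-resolvent law for observables with vanishing diagonal blocks, and you correctly identify the $(N\eta)^{-1/2}$ error in \Cref{prop:ll2} as the non-trivial input. Your spectral identity for $\Tr{G_{z}(i\eta_{1})E_{2,N}G_{z}(-i\eta_{2})E_{2,N}^{*}}$ is correct up to an irrelevant numerical factor.

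The genuine gap is the final step, ``shift $w_{j}=E_{j}+i\sigma_{j}\eta$ and run the same isolation argument.'' For purely imaginary spectral parameters the weights $\eta_{1}\eta_{2}/[(s_{m}^{2}+\eta_{1}^{2})(s_{n}^{2}+\eta_{2}^{2})]$ are nonnegative, and this positivity is precisely what lets you bound a single $(n,m)$ term by the total. Once $\Re w_{j}\neq0$ the weights become $w_{1}w_{2}/[(s_{m}^{2}-w_{1}^{2})(s_{n}^{2}-w_{2}^{2})]$, which are complex; there is no ``positivity of the spectral weights'' left to check, contrary to what you list as bookkeeping, and the isolation step as written fails whenever $s_{n}$ or $s_{m}$ is of order one. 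The repair is to work with $\tr\bigl[\Im G_{z}(w_{1})\bigr]_{22}\bigl[\Im G_{z}(w_{2})\bigr]_{11}$ instead of $\tr\bigl[G_{z}(w_{1})\bigr]_{22}\bigl[G_{z}(w_{2})\bigr]_{11}$. Both factors are PSD (they are diagonal blocks of the PSD matrix $\Im G_{z}$), so the double spectral sum has nonnegative coefficients, of size $\gtrsim\eta^{-2}$ for the term with $E_{1}\approx s_{m}$, $E_{2}\approx s_{n}$; at the same time, writing $\Im G_{z}(w)=\tfrac{1}{2i}(G_{z}(w)-G_{z}(\bar w))$ expands this trace into a signed combination of four quantities of the form $\Tr{G_{z}(\cdot)E_{2,N}G_{z}(\cdot)E_{2,N}^{*}}$, each with $|\Re w|<1$ and $|\Im w|=\eta$, and each controlled by \Cref{prop:ll2}. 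With this modification your argument goes through; the remaining checks (that $\Tr{M_{z,z}(w_{1},w_{2},E_{2,N})E_{2,N}^{*}}=O(1)$, and that the shifts $E_{j}$ stay inside the bulk window) really are bookkeeping.
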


Using this, we can obtain bounds for certain traces of resolvents when the spectral parameter is arbitrarily close to the real axis, which we will need when analysing Girko's formula.
\begin{lemma}\label{lem:aPriori}
Let $\eta\in(0,N^{-1}]$. Then we have
\begin{align}
    \left|\Tr{G_{z}(i\eta)}\right|&\prec \frac{1}{N\eta},\label{eq:ext1}\\
    \left|\Tr{G^{2}_{z}(i\eta)F}\right|&\prec \frac{1}{N^{3/2}\eta^{2}},\label{eq:ext2}\\
    \left|\Tr{G^{2}_{z}(i\eta)FG_{z}(i\eta)F^{*}}\right|&\prec \frac{1}{N^{2}\eta^{3}},\label{eq:ext3}
\end{align}
and for $w=E+i\eta,\,|E|\in[0,N^{-1}]$ and $\mbf{x},\mbf{y}\in S^{2N-1}(\mbb{C})$ we have
\begin{align}
    \left|\mbf{x}^{*}G_{z}(w)\mbf{y}\right|&\prec\frac{|w|}{N\eta^{2}},\label{eq:ext4}\\
    \left|\mbf{x}^{*}G_{z}(w)FG_{z}(w)\mbf{y}\right|&\prec\frac{|w|^{2}}{N^{3/2}\eta^{4}},\label{eq:ext5}\\
    \left|\mbf{x}^{*}G_{z}(w)FG_{z}(w)F^{*}G_{z}(w)\mbf{y}\right|&\prec\frac{|w|^{3}}{N^{2}\eta^{6}}.\label{eq:ext6}
\end{align}
\end{lemma}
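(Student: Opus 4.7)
The plan is to use the spectral decomposition
\[
G_z(w)=\sum_m \frac{1}{s_m^2-w^2}\begin{pmatrix}w\mbf{u}_m\mbf{u}_m^* & s_m\mbf{u}_m\mbf{v}_m^*\\ s_m\mbf{v}_m\mbf{u}_m^* & w\mbf{v}_m\mbf{v}_m^*\end{pmatrix}
\]
arising from the SVD $A_z=\sum_m s_m\mbf{u}_m\mbf{v}_m^*$, together with the delocalization and singular-vector overlap bounds of \Cref{prop:deloc}. First I would extract an overcrowding estimate at the scale $\eta_0=N^{-1+\xi}$: the local law of \Cref{prop:ll1} gives $\eta_0\Tr{H_z(\eta_0)}=O(1)$, and since each $s_m\leq \eta_0$ contributes at least $1/(2\eta_0^2)$ to $\sum_m 1/(s_m^2+\eta_0^2)$, this forces $\#\{s_m\leq \eta_0\}\prec N\eta_0\prec 1$. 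Combining this with a dyadic decomposition of $[\eta_0,1]$ (where the local law at each dyadic scale controls the number of singular values in each band) yields the master sum estimates $\sum_m 1/|s_m^2-w^2|^k\prec 1/\eta^{2k}$ for $k=1,2$ and all $w=E+i\eta$ with $|E|,\eta\leq N^{-1}$: one splits the sum at $s_m=2|w|$ so that on the large side $|s_m^2-w^2|\geq s_m^2/2$, while on the small side the exceptional set has $\prec 1$ elements each contributing at most $1/\eta^{2k}$.

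For the trace bounds \eqref{eq:ext1}--\eqref{eq:ext3} I would compute the traces directly from the spectral decomposition. The bound \eqref{eq:ext1} is immediate since $\tr G_z(i\eta)=2i\eta\sum_m 1/(s_m^2+\eta^2)$ and the master estimate yields $|\Tr{G_z(i\eta)}|\prec \eta/(N\eta^2)=1/(N\eta)$. For \eqref{eq:ext2}, the cross-block structure of $F=E_{2,N}$ reduces $\tr[G_z^2(i\eta) F]$ to the single sum $\sum_m 2i\eta s_m(\mbf{v}_m^*\mbf{u}_m)/(s_m^2+\eta^2)^2$; applying the overlap bound $|\mbf{u}_m^*\mbf{v}_n|\prec N^{-1/2}$ together with the AM-GM inequality $s_m/(s_m^2+\eta^2)^2\leq 1/(2\eta(s_m^2+\eta^2))$ and the master estimate gives the claim. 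For \eqref{eq:ext3}, the trace reduces to a double sum with factor $|\mbf{v}_m^*\mbf{u}_n|^2\prec 1/N$, producing the additional $1/N$ saving.

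For the isotropic bounds \eqref{eq:ext4}--\eqref{eq:ext6} I would use the same spectral decomposition, now with the endpoints contributing delocalization $|\mbf{q}^*\mbf{u}_m|,|\mbf{q}^*\mbf{v}_m|\prec N^{-1/2}$ from \Cref{prop:deloc}. For \eqref{eq:ext4}, each of the four block contributions to $\mbf{x}^*G_z(w)\mbf{y}$ is a sum $\sum_m(w\text{ or }s_m)/(s_m^2-w^2)$ times a product of two endpoint overlaps; delocalization gives $1/N$ per term, and together with the master estimate and its analogue $\sum_m s_m/|s_m^2-w^2|\prec N+|w|/\eta^2$ this bounds the total by $|w|/(N\eta^2)+1/(N\eta)+1\prec |w|/(N\eta^2)$ (using $|w|\geq \eta$ and $\eta\leq N^{-1}$). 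The bounds \eqref{eq:ext5} and \eqref{eq:ext6} follow analogously after noting that successive resolvent multiplications introduce internal overlap factors $\mbf{u}_m^*\mbf{v}_n$ arising from contraction of adjacent block structures, each contributing an extra $N^{-1/2}$ via the overlap bound; this accounts for the $N^{-1/2}$ and $N^{-1}$ improvements relative to \eqref{eq:ext4}. The main technical obstacle throughout is the bookkeeping around singular values $s_m\geq 1$, where the delocalization and overlap bounds of \Cref{prop:deloc} do not apply: for these, one uses Cauchy-Schwarz and orthonormality of singular vectors to bound the contribution by $O(1)$, which is dominated by the claimed bounds since each of these is $\geq 1$ in the regime $\eta\leq N^{-1}$.
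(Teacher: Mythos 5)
Your proposal is correct and takes essentially the same route as the paper: spectral decomposition of $G_z$, delocalization and overlap bounds from \Cref{prop:deloc} for singular values below $O(1)$, Cauchy--Schwarz with orthonormality for the rest, and extension of the local law to sub-microscopic $\eta$ via dyadic/overcrowding considerations. The only cosmetic difference is that you package the $\eta$-extension as an explicit overcrowding count plus dyadic ``master sum'' estimates $\sum_m|s_m^2-w^2|^{-k}\prec\eta^{-2k}$, whereas the paper derives \eqref{eq:ext1} by the monotonicity of $\eta\mapsto\eta\Im\Tr{G_z(i\eta)}$ and then feeds that bound directly into the later sums, reserving the dyadic argument for the single estimate $\sum_n s_n/(s_n^2+\eta^2)\prec 1/\eta$.
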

\begin{proof}
For $\sigma=N^{-1+\xi}$, the local law implies that
\begin{align*}
    \Im\Tr{G_{z}(i\sigma)}&\prec 1.
\end{align*}
Thus with $\eta<\sigma$ we find
\begin{align*}
    \Im\Tr{G_{z}(i\eta)}&\leq\frac{\sigma}{\eta}\Im\Tr{G_{z}(i\sigma)}\prec\frac{1}{N\eta}.
\end{align*}
This argument to extend the domain of the local law has appeared on several occasions (see e.g. section 10.1 in the notes of Benaych-Georges and Knowles \cite{benaych-georges_lectures_2019}).

For the remaining estimates we make use of the spectral decomposition of $G_{z}(w)$:
\begin{align}
    G_{z}(w)&=\sum_{n=1}^{N}\frac{2}{s^{2}_{n}(z)-w^{2}}\begin{pmatrix}w\mbf{u}_{n}\mbf{u}_{n}^{*}&s_{n}(z)\mbf{u}_{n}\mbf{v}_{n}^{*}\\s_{n}(z)\mbf{v}_{n}\mbf{u}_{n}^{*}&w\mbf{v}_{n}\mbf{v}_{n}^{*}\end{pmatrix},
\end{align}
where $A_{z}\mbf{v}_{n}=s_{n}(z)\mbf{u}_{n}$.

Consider the left-hand side of \eqref{eq:ext2}:
\begin{align*}
    \left|\Tr{G^{2}_{z}(i\eta)F}\right|&=\left|\frac{1}{N}\sum_{n=1}^{N}\frac{8i\eta s_{n}(z)\mbf{u}_{n}^{*}\mbf{v}_{n}}{(s^{2}_{n}(z)+\eta^{2})^{2}}\right|\\
    &\leq\frac{4}{N}\sum_{n=1}^{N}\frac{|\mbf{u}_{n}^{*}\mbf{v}_{n}|}{s^{2}_{n}(z)+\eta^{2}},
\end{align*}
where we have used the inequality $|xy|\leq(x^{2}+y^{2})/2$. We split the sum into two parts consisting of the singular values greater or less than 1. For the former we use the trivial bound $|\mbf{u}_{n}^{*}\mbf{v}_{m}|\leq 1/2$ and for the latter we use \eqref{eq:singularOverlap}:
\begin{align*}
    \left|\Tr{G^{2}_{z}(i\eta)F}\right|&\prec 1+\frac{1}{N^{3/2}}\sum_{s_{n}(z)<1}\frac{1}{s^{2}_{n}(z)+\eta^{2}}\\
    &\prec 1+\frac{1}{N^{1/2}\eta}\Im\Tr{G_{z}(i\eta)}\\
    &\prec\frac{1}{N^{3/2}\eta^{2}}.
\end{align*}
Consider now \eqref{eq:ext3}:
\begin{align*}
    \left|\Tr{G^{2}_{z}(i\eta)FG_{z}(i\eta)F^{*}}\right|&=\left|\frac{1}{N}\sum_{n,m=1}^{N}\frac{4\eta(s^{2}_{n}(z)-\eta^{2})|\mbf{u}_{n}^{*}\mbf{v}_{m}|^{2}}{(s^{2}_{n}(z)+\eta^{2})^{2}(s^{2}_{m}(z)+\eta^{2})}\right|\\
    &\leq \frac{4\eta}{N}\sum_{n,m=1}^{N}\frac{|\mbf{u}_{n}^{*}\mbf{v}_{m}|^{2}}{(s^{2}_{n}(z)+\eta^{2})(s^{2}_{m}(z)+\eta^{2})}.
\end{align*}
We split the sum as before and use \eqref{eq:singularOverlap} to obtain
\begin{align*}
    \left|\Tr{G^{2}_{z}(i\eta)FG_{z}(i\eta)F^{*}}\right|&\prec \frac{1}{\eta}+\frac{1}{\eta}\left(\frac{1}{N}\sum_{s_{n}(z)<1}\frac{2\eta}{s^{2}_{n}(z)+\eta^{2}}\right)^{2}\\
    &\prec\frac{1}{N^{2}\eta^{3}}.
\end{align*}
The proofs of \eqref{eq:ext4}, \eqref{eq:ext5} and \eqref{eq:ext6} are similar, except we also make use of the delocalisation bounds in \eqref{eq:deloc1} and \eqref{eq:deloc2}. Let us demonstrate \eqref{eq:ext6} for $\mbf{x}=(\mbf{e}_{j},0)^{T}$ and $\mbf{y}=(0,\mbf{e}_{k})$, the left hand side of which we denote by $x$. Applying the spectral decomposition we find
\begin{align*}
    x&=-8\sum_{n,m,l=1}^{N}\frac{w^{2}s_{n}(\mbf{e}_{j}^{*}\mbf{u}_{n})(\mbf{u}_{n}^{*}\mbf{v}_{m})(\mbf{v}_{m}^{*}\mbf{u}_{l})(\mbf{v}_{l}^{*}\mbf{e}_{k})}{(s_{n}^{2}-w^{2})(s_{m}^{2}-w^{2})(s_{l}^{2}-w^{2})}.
\end{align*}
We split the sum according to whether $s_{\mu}$ is greater or less than 1 for $\mu=n,m,l$. When $s_{\mu}>1$ we use the bound $|s_{\mu}^{2}-w^{2}|>C$ to remove $s_{\mu}$-dependent terms and then extend the sum to all indices using Cauchy-Schwarz and the fact that $\sum_{n}\mbf{u}_{n}\mbf{u}_{n}^{*}=\sum_{n}\mbf{v}_{n}\mbf{v}_{n}^{*}=1/2$. For example, when $s_{l}>1$ we have
\begin{align*}
    \left|\sum_{s_{l}>1}\frac{w^{2}s_{n}(\mbf{e}_{j}^{*}\mbf{u}_{n})(\mbf{u}_{n}^{*}\mbf{v}_{m})(\mbf{v}_{m}^{*}\mbf{u}_{l})(\mbf{v}_{l}^{*}\mbf{e}_{k})}{(s_{n}^{2}-w^{2})(s_{m}^{2}-w^{2})(s_{l}^{2}-w^{2})}\right|&\leq\frac{C|w|^{2}s_{n}|\mbf{e}_{j}^{*}\mbf{u}_{n}|\cdot|\mbf{u}_{n}^{*}\mbf{v}_{m}|}{|s_{n}^{2}-w^{2}|\cdot|s_{m}^{2}-w^{2}|}\sqrt{\sum_{l}|\mbf{v}_{m}^{*}\mbf{u}_{l}|^{2}}\sqrt{\sum_{l}|\mbf{v}_{l}^{*}\mbf{e}_{k}|^{2}}\\
    &=\frac{C|w|^{2}s_{n}|\mbf{e}_{j}^{*}\mbf{u}_{n}|\cdot|\mbf{u}_{n}^{*}\mbf{v}_{m}|}{|s_{n}^{2}-w^{2}|\cdot|s_{m}^{2}-w^{2}|}.
\end{align*}
When $s_{\mu}<1$ for $\mu=n$ or $\mu=l$ and $s_{m}<1$ we use $|\mbf{u}_{\mu}^{*}\mbf{v}_{m}|\prec N^{-1/2}$ and $|\mbf{e}_{j}^{*}\mbf{u}_{\mu}|\prec N^{-1/2}$. The largest contribution is from the terms with $s_{n},s_{m},s_{l}<1$:
\begin{align*}
    \left|\sum_{s_{n},s_{m},s_{l}<1}\frac{w^{2}s_{n}(\mbf{e}_{j}^{*}\mbf{u}_{n})(\mbf{u}_{n}^{*}\mbf{v}_{m})(\mbf{v}_{m}^{*}\mbf{u}_{l})(\mbf{v}_{l}^{*}\mbf{e}_{k})}{(s_{n}^{2}-w^{2})(s_{m}^{2}-w^{2})(s_{l}^{2}-w^{2})}\right|&\prec\frac{1}{N^{2}}\left(\sum_{n=1}^{N}\frac{s_{n}}{|s_{n}^{2}-w^{2}|}\right)\left(\sum_{n=1}^{N}\frac{|w|}{|s_{n}^{2}-w^{2}|}\right)^{2}.
\end{align*}
To deal with the remaining sums we consider separately the cases $|E|<\eta$ and $|E|>\eta$. In the former case we have
\begin{align*}
    |s_{n}^{2}-w^{2}|\geq\frac{s_{n}^{2}+\eta^{2}}{2},\quad |E|<\eta,
\end{align*}
and so
\begin{align*}
    \sum_{n=1}^{N}\frac{|w|}{|s_{n}^{2}-w^{2}|}&\leq\sum_{n=1}^{N}\frac{2|w|}{s_{n}^{2}+\eta^{2}}\prec\frac{|w|}{\eta^{2}}.
\end{align*}
Let $\sigma=N^{-1+\xi}$; then we have
\begin{align*}
    \sum_{n=1}^{N}\frac{s_{n}}{s_{n}^{2}+\eta^{2}}&\leq\sum_{n=1}^{N}\frac{s_{n}}{s_{n}^{2}+\sigma^{2}}+\sum_{n=1}^{N}\frac{(\sigma^{2}-\eta^{2})s_{n}}{(s_{n}^{2}+\eta^{2})(s_{n}^{2}+\sigma^{2})}\\
    &\leq\sum_{n=1}^{N}\frac{s_{n}}{s_{n}^{2}+\sigma^{2}}+\left(\sum_{n=1}^{N}\frac{s_{n}^{2}}{(s_{n}^{2}+\eta^{2})^{2}}\right)^{1/2}\left(\sum_{n=1}^{N}\frac{(\sigma^{2}-\eta^{2})^{2}}{(s_{n}^{2}+\sigma^{2})^{2}}\right)^{1/2}\\
    &\prec\sum_{n=1}^{N}\frac{s_{n}}{s_{n}^{2}+\sigma^{2}}+\frac{1}{\eta}\left(\sum_{n=1}^{N}\frac{\sigma^{4}}{(s_{n}^{2}+\sigma^{2})^{2}}\right)^{1/2}\\
    &\prec\sum_{n=1}^{N}\frac{s_{n}}{s_{n}^{2}+\sigma^{2}}+\frac{1}{\eta},
\end{align*}
where in the last inequality we have used $\Tr{G_{z}(i\sigma)}\prec1$. Using a dyadic decomposition and the local law we estimate the first term on the last line as follows
\begin{align*}
    \sum_{n=1}^{N}\frac{s_{n}}{s_{n}^{2}+\sigma^{2}}&\leq \frac{|\{n:s_{n}<\sigma\}|}{\sigma}+4\sum_{k=1}^{C\lfloor\log N\rfloor}\sum_{2^{k-1}\sigma<s_{n}<2^{k}\sigma}\frac{|\{n:s_{n}<2^{k}\sigma\}|}{(2^{k}+2^{-k+2})\sigma}\\
    &\prec N\log N\\
    &\prec\frac{1}{\eta}.
\end{align*}
Thus when $|E|<\eta$ we have
\begin{align*}
    \frac{1}{N^{2}}\left(\sum_{n=1}^{N}\frac{s_{n}}{|s_{n}^{2}-w^{2}|}\right)\left(\sum_{n=1}^{N}\frac{|w|}{|s_{n}^{2}-w^{2}|}\right)^{2}&\prec\frac{|w|^{2}}{N^{2}\eta^{4}}\prec\frac{|w|^{3}}{N^{3}\eta^{6}}.\label{eq:|E|<eta}
\end{align*}
When $|E|>\eta$, we split the sum into $s_{n}<10E$ and $s_{n}>10E$. When $s_{n}<10E$, we use the bound $|s_{n}^{2}-w^{2}|>2|E|\eta>\eta^{2}$. Since $|E|<N^{-1}$, the local law implies that $|\{n:s_{n}<10E\}|\prec1$ and so
\begin{align*}
    \sum_{s_{n}<10E}\frac{|w|}{|s_{n}^{2}-w^{2}|}&\prec\frac{|w|}{\eta^{2}},\\
    \sum_{s_{n}<10E}\frac{s_{n}}{|s_{n}^{2}-w^{2}|}&\prec\frac{|w|}{\eta^{2}}.
\end{align*}
When $s_{n}>10E$, we have $|s_{n}^{2}-w^{2}|>(s_{n}^{2}+\eta^{2})/2$ and can repeat the previous steps. Thus we find that \eqref{eq:|E|<eta} also holds when $|E|>\eta$.
\end{proof}

When using the Lindenberg replacement strategy, it is important that these statements (except the bound on the least singular value) also hold if we set the real or imaginary part of one element of $A$ to zero. For later reference, we record this fact in the following corollary.
\begin{corollary}\label{cor}
The statements of \Cref{prop:ll1,prop:ll2,prop:deloc} and \Cref{lem:aPriori} hold if $\Re a_{jk}$ or $\Im a_{jk}$ is set to zero for a single $(j,k)$ and the remaining elements satisfy \eqref{cond1}, \eqref{cond2} and \eqref{cond3}.
\end{corollary}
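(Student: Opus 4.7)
The strategy is a resolvent perturbation argument. The plan is to compare $\tilde A$ with an auxiliary matrix $A$ obtained by restoring the removed component: if $c$ denotes the value of $\Re a_{jk}$ (or $i\,\Im a_{jk}$) that has been set to zero, let $A = \tilde A + c\,\mbf{e}_{j}\mbf{e}_{k}^{T}$, where $c$ is drawn from an independent copy satisfying \eqref{cond1}--\eqref{cond3}. Then $A$ is a bona fide non-Hermitian Wigner matrix in the sense of \Cref{def1}, so \Cref{prop:ll1,prop:ll2,prop:deloc} and \Cref{lem:aPriori} apply directly to $G_{z}(w) := (\mc{H}_{z}-w)^{-1}$ computed from $A$. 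From \eqref{cond2}--\eqref{cond3} we have $|c|\prec N^{-1/2}$.

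The Hermitisations differ by a Hermitian rank-$2$ perturbation $P := \mc{H}_{z} - \tilde{\mc{H}}_{z} = c\,\mbf{f}\mbf{g}^{*} + \bar c\,\mbf{g}\mbf{f}^{*}$, where $\mbf{f} = (\mbf{e}_{j},0)^{T}$ and $\mbf{g} = (0,\mbf{e}_{k})^{T}$ in $\mbb{C}^{2N}$, with $\|P\|_{\mathrm{op}} = |c| \prec N^{-1/2}$. The Sherman--Morrison--Woodbury identity gives the exact formula
\begin{equation*}
\tilde G_{z}(w) - G_{z}(w) = G_{z}(w)\,U\,(I_{2} - V^{*} G_{z}(w) U)^{-1}\,V^{*} G_{z}(w),
\end{equation*}
with $U = (c\mbf{f},\bar c\mbf{g})$ and $V = (\mbf{g},\mbf{f})$ so that $P = UV^{*}$. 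The four entries of the $2\times 2$ matrix $V^{*} G_{z}(w) U$ are $\prec |c|(1+(N\eta_{*})^{-1/2})\prec N^{-1/2}$ by the isotropic local law in \Cref{prop:ll1}, so $(I_{2}-V^{*} G_{z} U)^{-1} = I_{2} + O(N^{-1/2})$ for any $\eta_{*} > N^{-1+\epsilon}$.

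Next I would check case-by-case that the rank-$2$ correction is negligible compared to each target error. For the single-trace bound, $\Tr{(\tilde G - G)F_{1}}$ reduces to a bounded number of terms of the shape $N^{-1}\,c\,\mbf{x}^{*} G F_{1} G \mbf{y}$ with $\mbf{x},\mbf{y}\in\{\mbf{f},\mbf{g}\}$; by Cauchy--Schwarz and the Ward identity $G(w)^{*}G(w) = \Im G(w)/\eta$, each such term is $\prec N^{-3/2}/\eta_{*}$, much smaller than the $(N\eta_{*})^{-1}$ target. The isotropic difference $\mbf{x}^{*}(\tilde G-G)\mbf{y}$ is a product of $|c|$ and isotropic local law quantities, giving $\prec N^{-1/2}$, smaller than $(N\eta_{*})^{-1/2}$. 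The two- and three-resolvent statements in \Cref{prop:ll1,prop:ll2} are handled by telescoping,
\begin{equation*}
\tilde G F \tilde G - G F G = (\tilde G - G) F \tilde G + G F (\tilde G - G),
\end{equation*}
and the obvious three-term analogue, with each piece controlled by the already-established single-resolvent difference together with the corresponding bound for $A$; the gain $|c|\prec N^{-1/2}$ always dominates the polynomial $\eta^{-k}$ factor that arises.

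The delocalisation bounds in \Cref{prop:deloc} then follow from the extended isotropic local law by the usual spectral-decomposition argument, and the a priori bounds of \Cref{lem:aPriori} follow mechanically, since their proof in the text only invokes the single-trace law at $\sigma = N^{-1+\xi}$, delocalisation, and the singular vector overlap bound \eqref{eq:singularOverlap}, all now available for $\tilde A$. The main obstacle is purely bookkeeping: one must verify in each case that the $N^{-1/2}$ gain dominates the various $\eta^{-k}$ factors that appear in the bilinear-form estimates, but since $\eta > N^{-1+\epsilon}$ and the target errors already contain comparable powers of $\eta^{-1}$, the margin is always comfortable.
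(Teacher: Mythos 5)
Your proposal is correct and takes essentially the same approach as the paper: a finite-rank (rank-$2$ Hermitian) perturbation argument, using the local laws for the unperturbed resolvent together with $|c|\prec N^{-1/2}$ to show the correction is subleading. The only cosmetic difference is that you use the exact Sherman--Morrison--Woodbury identity where the paper iterates the second resolvent identity into a truncated Neumann series with a remainder term; the two are interchangeable here.
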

\begin{proof}
Since \Cref{prop:deloc} and \Cref{lem:aPriori} follow from \Cref{prop:ll1} and \Cref{prop:ll2}, it is enough to show that the latter two continue to hold. If $A^{(jk)}$ denotes the matrix obtained from $A$ by setting $\Re a_{jk}=0$ and $G^{(jk)}_{z}$ the resolvent of the corresponding Hermitisation, then we have
\begin{align}
    G^{(jk)}_{z}(w)&=\sum_{q=0}^{p-1}(\Re a_{jk})^{q}(G_{z}(w)\Delta_{jk})^{q}G_{z}(w)+(\Re a_{jk})^{p}(G_{z}(w)\Delta_{jk})^{p}G^{(jk)}_{z}(w),
\end{align}
for any $p\in\mbb{N}$, where $\Delta_{jk}$ is the matrix with -1 in the $(j,N+k)$ and $(N+j,k)$ entries and zero elsewhere. We insert this into various trace expressions and use the facts that $|a_{jk}|\prec N^{-1/2}$ and $|G_{z,jk}(w)|\prec 1$ for $|\Im w|>N^{-1+\epsilon}$.
\end{proof}

Now we make some preparations for the calculations with Gauss-divisible matrices. For $V\in\mbb{C}^{N\times k}$ such that $V^{*}V=1_{k}$ we denote by $X^{(V)}$ the projection of $X$ onto the orthogonal complement of the span of the columns of $V$. We define the function $\phi_{z}:\mbb{R}_{+}\to\mbb{R}$ by
\begin{align}
    \phi_{z}(\eta)&=\frac{\eta^{2}}{t}-\Tr{\log(\eta^{2}+|X_{z}|^{2})}\label{eq:phi}
\end{align}
which will play a central role in the asymptotic analysis of certain integrals. The minimum occurs at the point $\eta_{z}$ that satisfies 
\begin{align}
    t\Tr{H_{z}(\eta_{z})}&=1.\label{eq:eta_zt}
\end{align}
It can be shown (see \cite[Lemmas 3.5 and 3.6]{osman_bulk_2024}) that on the event that the local laws in \Cref{prop:ll1,prop:ll2} hold we have
\begin{align}
    t/C<\eta_{z}&<Ct,\label{eq:etaBound}\\
    \phi_{z}(\eta)-\phi_{z}(\eta_{z})&\geq\frac{C(\eta-\eta_{z})^{2}}{t},\label{eq:phiBound}
\end{align}
for any $z\in\mbb{D}$ and $N^{-1+\epsilon}\leq t\leq1$. The presence of a superscript $(V)$ means that we replace $X$ with $X^{(V)}$ in the relevant quantity, e.g. 
\begin{align*}
    H^{(V)}_{z}(\eta)&=(|X^{(V)}_{z}|^{2}+\eta^{2})^{-1},\\
    \phi^{(V)}_{z}(\eta)&=\frac{\eta^{2}}{t}-\Tr{\log(\eta^{2}+|X^{(V)}_{z}|^{2})},
\end{align*}
and so on. By interlacing it follows that
\begin{align}
    |\eta^{(V)}_{z}-\eta_{z}|&\leq\frac{C\textup{rank}(V)}{N}.\label{eq:etaBound2}
\end{align}
We suppress the argument of $\phi_{z},H_{z},G_{z}$ when they are evaluated at $\eta_{z}$, i.e. $\phi_{z}=\phi_{z}(\eta_{z})$ etc. We also define the quantities
\begin{align}
    \sigma_{z}&=\eta^{2}_{z}\Tr{H_{z}\wt{H}_{z}}+\frac{|\Tr{H^{2}_{z}X_{z}}|^{2}}{\Tr{H^{2}_{z}}},\label{eq:sigma}\\
    \wt{\sigma}_{z}&=\eta_{z}^{2}\Tr{H_{\bar{z}}\wt{H}_{z}}+\frac{|\Tr{H_{\bar{z}}X_{z}H_{z}}|^{2}}{\Tr{H_{z}H_{\bar{z}}}},\label{eq:sigmaTilde}
\end{align}
which appear in the asymptotic formulas below.

The complex partial Schur decomposition is the map
\begin{align}
    B(z,\mbf{v},\mbf{w},B')&=R(\mbf{v})\begin{pmatrix}z&\mbf{w}^{*}\\0&B'\end{pmatrix}R(\mbf{v}),
\end{align}
where $R(\mbf{v})$ is the Householder than exchanges $\mbf{v}$ with the first coordinate vector. For $X\in\mbb{M}_{N}(\mbb{C})$ and $z\in\mbb{C}$, define the probability measure on $S^{N-1}(\mbb{C})$ by
\begin{align}
    d\mu^{X}_{z}(\mbf{v})&=\frac{1}{K(z)}\left(\frac{N}{\pi t}\right)^{N-1}e^{-\frac{N}{t}\|X_{z}\mbf{v}\|^{2}}d_{H}\mbf{v},\label{eq:mu}
\end{align}
with normalisation $K(z)$. Let $B=X+\sqrt{t}Y$, where $Y\sim GinUE(N)$ and $X$ is deterministic. Then we have
\begin{align}
    \mbb{E}_{B}\left[\sum_{n=1}^{N}f(z_{n},\mbf{r}_{n},B'_{n})\right]&=\frac{N}{2\pi^{2}t}\int_{\mbb{C}}K(z)\mbb{E}_{\mu_{z}}\left[\mbb{E}_{Y'}\left[f(z,\mbf{v},B')|\det B'_{z}|^{2}\right]\right]dz,\label{eq:complexPartialSchur}
\end{align}
where $B'=X^{(\mbf{v})}+\sqrt{\frac{Nt}{N-1}}Y'$ and $Y'\sim GinUE(N-1)$. Here $f(z,\mbf{v},B')$ is any measurable function such that the integral converges absolutely.

The real-real partial Schur decomposition is the map
\begin{align}
    B(u,\mbf{v},\mbf{w},B')&=R(\mbf{v})\begin{pmatrix}u&\mbf{w}^{T}\\0&B'\end{pmatrix}R(\mbf{v}).
\end{align}
For $X\in\mbb{M}_{N}(\mbb{R})$ and $u\in\mbb{R}$, define the probability measure on $S^{N-1}(\mbb{R})$ by
\begin{align}
    d\nu^{X}_{u}(\mbf{v})&=\frac{1}{K_{\mbb{R}}(u)}\left(\frac{N}{2\pi t}\right)^{N/2-1}e^{-\frac{N}{2t}\|X_{z}\mbf{v}\|^{2}}d_{H}\mbf{v},\label{eq:nu}
\end{align}
with normalisation $K_{\mbb{R}}(u)$. Let $B=X+\sqrt{t}Y$, where $Y\sim GinOE(N)$ and $X$ is deterministic. Then we have
\begin{align}
    \mbb{E}_{B}\left[\sum_{n=1}^{N_{\mbb{R}}}f(u_{n},\mbf{r}_{\mbb{R},n},B'_{\mbb{R},n})\right]&=\frac{N}{4\pi t}\int_{\mbb{R}}K_{\mbb{R}}(u)\mbb{E}_{\nu_{u}}\left[\mbb{E}_{Y'}\left[f(u,\mbf{v},B')|\det B'_{u}|\right]\right]du,\label{eq:real-real}
\end{align}
where $B'=X^{(\mbf{v})}+\sqrt{\frac{Nt}{N-1}}Y'$ and $Y'\sim GinOE(N-1)$.

The real-complex partial Schur decomposition is the map
\begin{align}
    B(x,y,\delta,V,W,B')&=Q(V)\begin{pmatrix}Z&W^{T}\\0&B'\end{pmatrix}Q(V),
\end{align}
where $Q(V)$ is the product of Householders whose first two columns is $V$ and
\begin{align}
    Z&=\begin{pmatrix}x&b\\-c&x\end{pmatrix},\quad y=\sqrt{bc}\geq0,\quad\delta=b-c\geq0.\label{eq:Z}
\end{align}
Define the probability measure on $O(N,2)$ by
\begin{align}
    d\xi^{X}_{\delta,z}(V)&=\frac{1}{L(\delta,z)}\left(\frac{N}{2\pi t}\right)^{N-3}\exp\left\{-\frac{N}{2t}\tr\left(V^{T}X^{T}XV-2Z^{T}V^{T}XV+Z^{T}Z\right)\right\}d_{H}V,
\end{align}
with normalisation $L(\delta,z)$. Then we have
\begin{align}
    \mbb{E}\left[\sum_{n=1}^{N_{\mbb{C}}}f(z_{n},\mbf{r}_{\mbb{C},n},B'_{\mbb{C},n})\right]&=\left(\frac{N}{2\pi t}\right)^{3}\int_{\mbb{C}}\int_{0}^{\infty}\frac{2y\delta}{\sqrt{\delta^{2}+4y^{2}}}L(\delta,z)\nonumber\\
    &\times\mbb{E}_{\xi_{\delta,z}}\left[\mbb{E}_{Y'}\left[f(z,\delta,\mbf{r}(V),B')|\det B'_{z}|^{2}\right]\right]d\delta dz,\label{eq:real-complex}
\end{align}
where $B'=X^{(V)}+\sqrt{\frac{Nt}{N-2}}Y'$, $Y'\sim GinOE(N-2)$,
\begin{align}
    \mbf{r}(V)&=\sqrt{\frac{b}{b+c}}\mbf{v}_{1}+i\sqrt{\frac{c}{b+c}}\mbf{v}_{2},
\end{align}
and $\mbf{v}_{1},\mbf{v}_{2}$ are the columns of $V$.

For proofs and more details about these definitions see \cite{maltsev_bulk_2023,osman_bulk_2024}.

\section{Moments of the Absolute Value of the Characteristic Polynomial}\label{sec4}

Let $m\in\mbb{N}$. Our goal in this section is to calculate
\begin{align}
    D_{2m-1}(u)&:=\mbb{E}_{Y}\left[|\det(X_{u}+\sqrt{t}Y)|^{2m-1}\right],
\end{align}
where $Y\sim GinOE(N)$ and $m\in\mbb{N}$. The even moments will be follow as a byproduct. We write
\begin{align*}
    |\det(X_{u}+\sqrt{t}Y)|^{2m-1}&=\lim_{\eta\to0}\frac{|\det(X_{u}+\sqrt{t}Y)|^{2m}}{\det^{1/2}\left(|X_{u}+\sqrt{t}Y|^{2}+\eta^{2}\right)}
\end{align*}
and take the limit outside the expectation. The determinant in the numerator is represented by an integral over anti-commuting variables:
\begin{align*}
    |\det(X_{u}+\sqrt{t}Y)|^{2m}&=\int\exp\left\{-\sum_{j=1}^{2m}\psi^{*}_{j}(X_{u}+\sqrt{t}Y)\psi_{j}\right\}d\psi.
\end{align*}
The determinant in the denominator is represented by an ordinary integral:
\begin{align*}
    \det^{-1/2}\left(|X_{u}+\sqrt{t}Y|^{2}+\eta^{2}\right)&=\frac{1}{(2\pi)^{N/2}}\int_{\mbb{R}^{N}}\exp\left\{-\frac{1}{2}\mbf{x}^{T}\left(|X_{u}+\sqrt{t}Y|^{2}+\eta^{2}\right)\mbf{x}\right\}d\mbf{x}.
\end{align*}
We collect all the terms in the exponent that depend on $Y$ and compute the expectation:
\begin{align*}
    &\mbb{E}_{Y}\left[\exp\left\{-\frac{t}{2}\mbf{x}^{T}Y^{T}Y\mbf{x}-\sqrt{t}\tr Y\left(\mbf{x}\mbf{x}^{T}X_{u}^{T}-\sum_{j=1}^{2m}\psi_{j}\psi_{j}^{*}\right)\right\}\right]\\
    &=(1+\tr S_{\mbf{x}})^{-N/2}\exp\left\{\frac{1}{2}\frac{\tr S_{\mbf{x}}}{1+\tr S_{\mbf{x}}}\mbf{x}^{T}|X_{u}|^{2}\mbf{x}+\sum_{j=1}^{2m}\psi_{j}^{*}X_{u}S_{\mbf{x}}(1+S_{\mbf{x}})^{-1}\psi_{j}\right.\\
    &\left.+\frac{t}{2N}\sum_{1\leq j\neq k\leq 2m}(\psi_{j}^{*}\bar{\psi}_{k})(\psi_{k}^{T}(1+S_{\mbf{x}})^{-1}\psi_{j})\right\},
\end{align*}
where we have defined $S_{\mbf{x}}=\frac{t}{N}\mbf{x}\mbf{x}^{T}$. The quartic term on the last line is made quadratic by a Hubbard-Stratonovich transformation:
\begin{align*}
    e^{\frac{t}{N}\sum_{j\neq k}^{2m}(\psi_{j}^{*}\bar{\psi}_{k})(\psi_{k}^{T}(1+S_{\mbf{x}})^{-1}\psi_{j})}&=\left(\frac{N}{\pi t}\right)^{m(2m-1)}\int_{\mbb{M}^{skew}_{2m}(\mbb{C})}e^{-\frac{N}{2t}\tr|Q|^{2}+\sum_{j\neq k}^{2m}(q_{jk}\psi_{j}^{*}\bar{\psi}_{k}+\bar{q}_{jk}\psi_{k}^{T}(1+S_{\mbf{x}})^{-1}\psi_{j})}dQ.
\end{align*}
Now the term in the exponent dependent on $\psi_{j}$ is a quadratic form $\Psi^{T}M\Psi$ in the vector $\Psi=(\bar{\psi}_{1},...,\bar{\psi}_{2m},\psi_{1},...,\psi_{2m})^{T}$, where
\begin{align*}
    M&=\begin{pmatrix}Q\otimes1_{N}&1_{2m}\otimes X_{u}(1+S_{\mbf{x}})^{-1}\\-1_{2m}\otimes(1+S_{\mbf{x}})^{-1}X_{u}^{T}&Q^{*}\otimes(1+S_{\mbf{x}})^{-1}\end{pmatrix}.
\end{align*}
Integrating over $\psi_{j}$ we obtain the pfaffian of $M$, which depends only on the singular values $\sigma_{1},...,\sigma_{m}$ of $Q$:
\begin{align*}
    \pf M&=\prod_{j=1}^{m}\det\left(\sigma_{j}^{2}+|X_{u}|^{2}\right)\left(1+\sigma_{j}^{2}\mbf{x}^{T}H_{u}(\sigma_{j})\mbf{x}\right).
\end{align*}

Changing variables 
\begin{align*}
    \mbf{x}&=\sqrt{\frac{Nr}{t}}\mbf{v},\quad,r\in\mbb{R}_{+},\,\mbf{v}\in S^{N-1}(\mbb{R}),\\
    Q&=U\begin{pmatrix}0&\bs\sigma\\-\bs\sigma&0\end{pmatrix}U^{T},\quad\bs\sigma\in\mbb{R}^{m}_{+},\,U\in U(2m)/USp(2)^{m},
\end{align*}
we arrive at the formula
\begin{align}
    D_{2m-1}(u)&=\frac{N}{4\pi t}e^{-N(m-1/2)\phi_{u}}\lim_{\eta\to0}\int_{0}^{\infty}\frac{1}{(1+r)^{2m+1}}e^{-\frac{N}{2t}\eta^{2}r}\psi(r)dr,
\end{align}
where
\begin{align}
    \psi(r)&=\left(\frac{Nr}{2\pi t(1+r)}\right)^{N/2-1}e^{-\frac{N}{2}\phi_{u}}\int_{S^{N-1}(\mbb{R})}e^{-\frac{Nr}{2t(1+r)}\|X_{u}\mbf{v}\|^{2}}I(r,\mbf{v})d_{H}\mbf{v},
\end{align}
and
\begin{align}
    I(r,\mbf{v})&=\frac{2^{m}}{m!\prod_{j=1}^{2m-1}j!}\left(\frac{N}{t}\right)^{m(2m-1)}\int_{\mbb{R}^{m}_{+}}\Delta^{4}(\bs\sigma^{2})\prod_{j=1}^{m}e^{-N\left[\phi_{u}(\sigma_{j})-\phi_{u}\right]}\left(1+r\sigma_{j}^{2}\mbf{v}^{T}H_{u}(\sigma_{j})\mbf{v}\right)\sigma_{j}d\sigma_{j}.
\end{align}
Since the integrand decays as $(1+r)^{m+1}$ we can take the limit $\eta\to0$ inside the integral. Note that $I(r,\mbf{v})$ with $r=0$ gives us the corresponding formula for $D_{2m}(u)$.

By interlacing and \eqref{eq:phiBound} we obtain
\begin{align}
    e^{-N\left[\phi_{u}(\sigma)-\phi_{u}\right]}\mbf{v}^{T}H_{u}(\sigma)\mbf{v}&\leq e^{-\frac{CN(\sigma-\eta_{u})^{2}}{t}}.
\end{align}
With this we restrict $\sigma_{j}$ to the interval $[0,\sqrt{\frac{t}{N}}\log N]$ for each $j=1,...,m$. In this interval we have
\begin{align*}
    \phi_{u}(\sigma)&=\phi_{u}+4\eta_{u}^{2}\Tr{H_{u}^{2}}(\sigma-\eta_{u})^{2}+O\left(\frac{\log^{3} N}{\sqrt{Nt}}\right),\\
    \mbf{v}^{T}H_{u}(\sigma_{j})\mbf{v}&=\left[1+O\left(\frac{\log N}{\sqrt{Nt}}\right)\right]\mbf{v}^{T}H_{u}\mbf{v},\\
    \Delta^{4}(\bs\sigma^{2})&=\left[1+O\left(\frac{\log N}{\sqrt{Nt}}\right)\right](2\eta_{u})^{2m(m-1)}\Delta^{4}(\bs\sigma),
\end{align*}
and so
\begin{align}
    I(r,\mbf{v})&=\left[1+O\left(\frac{\log^{3} N}{\sqrt{Nt}}\right)\right]d_{N,2m}(u)\left(1+r\eta_{u}^{2}\mbf{v}^{T}H_{u}\mbf{v}\right)^{m},
\end{align}
where
\begin{align}
    d_{N,2m}(u)&=\frac{(2\pi)^{m/2}}{\prod_{j=1}^{m-1}(2j)!}\left(\frac{N}{t^{2}\Tr{H_{u}^{2}}}\right)^{m(m-1/2)}.
\end{align}
We have used Selberg's integral to evaluate the integral over $\bs\sigma$. Note that $d_{N,2m}(u)e^{-Nm\phi_{u}}$ is the asymptotic formula for $\mbb{E}_{Y}\left[|\det(X_{u}+\sqrt{t}Y)|^{2m}\right]$.

Now we come to the integral over the sphere. Let us first obtain an upper bound when $r\in(0,1/\delta)$ for some small $\delta>0$. We use the trivial bound $\eta_{u}^{2}\mbf{v}^{T}H_{u}\mbf{v}\leq1$ so that we only have the exponential term, which we treat by the duality formula \cite[Lemma 3.4]{osman_bulk_2024}:
\begin{align}
    \psi(r)&\leq Cd_{N,2m}(u)(1+r)^{m}e^{-\frac{N}{2}\wt{\phi}_{u}(r)}\int_{-\infty}^{\infty}e^{\frac{iNrp}{2t(1+r)}}\det^{-1/2}\left[1+ipH_{u}(\eta_{u}(r))\right]dp,
\end{align}
where
\begin{align}
    \wt{\phi}_{u}(r)&=\phi_{u}-\frac{r\eta_{u}^{2}(r)}{t(1+r)}+\Tr{\log(\eta^{2}_{u}(r)+|X_{u}|^{2})},\label{eq:phiTilde}
\end{align}
and $\eta_{u}(r)$ is defined by
\begin{align}
    t\Tr{H_{u}(\eta_{u}(r))}&=\frac{r}{1+r}.\label{eq:eta_r}
\end{align}

From the definition it follows that $\eta_{u}(r)$ is decreasing in $r\geq0$ and we have the asymptotics
\begin{align}
    \eta_{u}(r)&=\begin{cases}\left[1+O(r^{-1})\right]\eta_{u}&\quad r\gg1\\
    O(\sqrt{t/r})&\quad r\ll \frac{t}{\eta_{u}^{2}+\|X_{u}\|^{2}}
    \end{cases}.
\end{align}
To see this, we observe that the definition of $\eta_{u}(r)$ is equivalent to
\begin{align}
    t[\eta^{2}_{u}(r)-\eta^{2}_{u}]\Tr{H_{u}(\eta_{u}(r))H_{u}}&=\frac{1}{1+r}.
\end{align}
Since $\eta_{u}(r)$ is decreasing in $r$ and $\eta\mapsto\eta^{2}\Tr{H_{u}(\eta)}$ is increasing, we deduce that 
\begin{align}
    \frac{t[\eta^{2}_{u}(r)-\eta^{2}_{u}]\eta^{2}_{u}\Tr{H^{2}_{u}}}{\eta^{2}_{u}(r)}&\leq\frac{1}{1+r}\leq t[\eta^{2}_{u}(r)-\eta^{2}_{u}]\Tr{H^{2}_{u}}.
\end{align}
Noting that $\Tr{H^{2}_{u}}=O(t^{-3})$, this gives us the large $r$ asymptotic. For the small $r$ asymptotic, we note that for large $\eta$ we have $\Tr{H_{u}(\eta)}=O(\eta^{-2})$.

In the region $r\ll t/(\eta_{u}^{2}+\|X_{u}\|^{2})$, the integral over $p$ satisfies the bound
\begin{align*}
    \left|\int_{-\infty}^{\infty}e^{\frac{iNrp}{2t(1+r)}}\det^{-1/2}\left[1+ipH_{u}(\eta_{u}(r))\right]dp\right|&\leq\int_{-\infty}^{\infty}\left(1+\frac{p^{2}}{(\|X_{u}\|^{2}+\eta^{2}_{u}(r))^{2}}\right)^{-N/2}dp\\
    &\leq\frac{C(1+r^{-1})}{\sqrt{N}},
\end{align*}
while $\wt{\phi}_{u}(r)$ satisfies
\begin{align*}
    \wt{\phi}_{u}(r)&\geq-C+\Tr{\log\left[1+(\eta^{2}_{u}(r)-\eta^{2}_{u})H_{u}\right]}\\
    &\geq -C+\Tr{\log\left[1+\frac{Ct}{r}H_{u}\right]}\\
    &\geq -C+\log\frac{t}{r(\eta_{u}^{2}+\|X_{u}\|^{2})}.
\end{align*}
Thus $\psi(r)$ is bounded by
\begin{align*}
    \psi(r)&\leq\frac{Cd_{N,2m}(\eta_{u}^{2}+\|X_{u}\|^{2})}{t\sqrt{N}}e^{CN}\left(\frac{r(\eta_{u}^{2}+\|X_{u}\|^{2})}{t}\right)^{N/2-1},
\end{align*}
and we can neglect the integral over $0<r<\frac{\delta t}{\eta_{u}^{2}+\|X_{u}\|^{2}}$.

When $\frac{\delta t}{\eta_{u}^{2}+\|X_{u}\|^{2}}<r<\frac{1}{\delta}$, we compute the derivative of $\wt{\phi}_{u}(r)$ to be
\begin{align*}
    \partial_{r}\wt{\phi}_{u}(r)&=-\frac{\eta^{2}_{u}(r)}{t(1+r)^{2}},
\end{align*}
and so $\wt{\phi}_{u}(r)$ is monotonically decreasing to $0$ at $r=\infty$. If $r>1/\delta$ for sufficiently small $\delta$, then from the asymptotics of $\eta_{u}(r)$ we have $\eta_{u}(r)=[1+O(\delta)]\eta_{u}$ and
\begin{align*}
    -\partial_{r}\wt{\phi}_{u}(r)&\geq\frac{C\eta^{2}_{u}}{tr^{2}},
\end{align*}
which gives us the lower bound
\begin{align*}
    \wt{\phi}_{u}(1/\delta)&=\wt{\phi}_{u}(\infty)-\int_{1/\delta}^{\infty}\partial_{r}\wt{\phi}_{u}(r)dr\\
    &\geq \frac{C\eta_{u}^{2}}{t}\int_{1/\delta}^{\infty}\frac{1}{r^{2}}dr\\
    &\geq C\delta t.
\end{align*}
Thus we have
\begin{align*}
    e^{-\frac{N}{2}\wt{\phi}_{u}(r)}&\leq e^{-C\delta Nt},\quad r<1/\delta,
\end{align*}
and can restrict $r$ to the region $r>1/\delta$ for $\delta=\frac{\log^{2}N}{Nt}$. 

Having restricted $r$ to this region we return to the full expression for $\psi(r)$. We define the $u$ and $r$-dependent probability measure $\mu_{u,r}$ on $S^{N-1}(\mbb{R})$ by
\begin{align}
    d\mu_{u,r}(\mbf{v})&=\frac{1}{K_{u}(r)}\left(\frac{Nr}{2\pi t(1+r)}\right)^{N/2-1}e^{-\frac{Nr}{2t(1+r)}\|X_{u}\mbf{v}\|^{2}}d_{H}\mbf{v},
\end{align}
where $K_{u}(r)$ is the normalisation. In terms of $\mu_{u,r}$ we have
\begin{align}
    \psi(r)&=\left[1+O\left(\frac{\log^{3} N}{\sqrt{Nt}}\right)\right]d_{N,2m}(u)e^{-\frac{N}{2}\phi_{u}}K_{u}(r)\mbb{E}_{u,r}\left[\left(1+r\eta_{u}^{2}\mbf{v}^{T}H_{u}\mbf{v}\right)^{m}\right].
\end{align}

Using the duality formula \cite[Lemma 3.4]{osman_bulk_2024} for integrals on the sphere we have
\begin{align}
    K_{u}(r)&=e^{\frac{N}{2}\left[\phi_{u}-\wt{\phi}_{u}(r)\right]}\int_{-\infty}^{\infty}e^{\frac{iNrp}{2t(1+r)}}\det^{-1/2}\left[1+ipH_{u}(\eta_{u}(r))\right]dp.
\end{align}
We can estimate the $p$ integral using the fact that $\eta_{u}(r)=O(t)$ and 
\begin{align*}
    |\det^{-1/2}[1+ipH_{u}(\eta_{u}(r))]&\leq\exp\left\{-\frac{CNp^{2}\Tr{H_{u}^{2}(\eta_{u}(r))}}{1+p^{2}/t^{4}}\right\}\\
    &\leq\exp\left\{-CNt\cdot\frac{p^{2}/t^{4}}{1+p^{2}/t^{4}}\right\},
\end{align*}
whereas for large $p>C\|X_{u}\|$ we have $|\det^{-1/2}\left[1+ipH_{u}(\eta_{u}(r))\right]|\leq e^{-CN\log|p|}$. Thus we can restrict $p$ to the interval $|p|<\sqrt{\frac{t^{3}}{N}}\log N$ and then Taylor expand the determinant to obtain
\begin{align}
    K_{u}(r)&=\left[1+O\left(\frac{\log^{3} N}{\sqrt{Nt}}\right)\right]\sqrt{\frac{4\pi}{N\Tr{H_{u}^{2}(\eta_{u}(r))}}}e^{\frac{N}{2}\left[\phi_{u}-\wt{\phi}_{u}(r)\right]}.
\end{align}

To evaluate the expectation of quadratic forms we compute instead the moment generating function
\begin{align*}
    m_{u,r}(\lambda)&:=e^{-\frac{\lambda\eta_{u}^{2}(1+r)t}{r}\Tr{H_{u}(\eta_{u}(r))H_{u}}}\mbb{E}_{u,r}\left[e^{\lambda\eta_{u}^{2}\mbf{v}^{T}H_{u}\mbf{v}}\right]\\
    &=\frac{e^{\frac{N}{2}\left[\phi_{u}-\wt{\phi}_{u}(r)\right]}}{K_{u}(r)}\int_{-\infty}^{\infty}e^{\frac{iNrp}{2t(1+r)}}\det^{-1/2}\left[1+ip\left(\eta_{u}^{2}(r)+|X_{u}|^{2}-\frac{2\lambda\eta_{u}^{2}(1+r)t}{Nr}H_{u}\right)^{-1}\right]dp\\
    &\times e^{-\frac{\lambda\eta_{u}^{2}(1+r)t}{r}\Tr{H_{u}(\eta_{u}(r))H_{u}}}\det^{-1/2}\left[1-\frac{2\lambda\eta_{u}^{2}(1+r)t}{Nr}H_{u}(\eta_{u}(r))H_{u}\right].
\end{align*}
Assume that $|\lambda|<\frac{N\eta_{u}^{2}(r)r}{4t(1+r)}<CNt$. Then
\begin{align*}
    \frac{2|\lambda|\eta_{u}^{2}(1+r)t}{Nr}\left\|H_{u}(\eta_{u}(r))H_{u}\right\|&\leq\frac{1}{2}.
\end{align*}
This means we can treat the integral over $p$ in the same way as before to obtain
\begin{align*}
    m_{u,r}(\lambda)&\leq Ce^{-\frac{\lambda\eta_{u}^{2}(1+r)t}{r}\Tr{H_{u}(\eta_{u}(r))H_{u}}}\det^{-1/2}\left[1-\frac{2\lambda\eta_{u}^{2}(1+r)t}{Nr}H_{u}(\eta_{u}(r))H_{u}\right]\\
    &\leq C\exp\left\{\frac{2\lambda^{2}\eta_{u}^{4}(1+r)^{2}t^{2}}{Nr^{2}}\Tr{H_{u}^{2}(\eta_{u}(r))H_{u}^{2}}\right\}\\
    &\leq Ce^{\frac{C\lambda^{2}}{Nt}}.
\end{align*}
By Markov's inequality we deduce that
\begin{align}
    \eta_{u}^{2}\mbf{v}^{T}H_{u}\mbf{v}&=\left[1+O\left(\frac{\log N}{\sqrt{Nt}}\right)\right]\frac{\eta_{u}^{2}(1+r)t}{r}\Tr{H_{u}(\eta_{u}(r))H_{u}}\\
    &=\left[1+O\left(\frac{\log N}{\sqrt{Nt}}\right)\right]\eta_{u}^{2}t\Tr{H_{u}^{2}},
\end{align}
with probability $1-O(e^{-C\log^{2}N})$. Therefore we can replace the quadratic form with its deterministic approximation to obtain
\begin{align}
    \psi(r)&=\left[1+O\left(\frac{\log^{3} N}{\sqrt{Nt}}\right)\right]\sqrt{\frac{4\pi}{N\Tr{H_{u}^{2}}}}d_{N,2m}(u)e^{-\frac{N}{2}\wt{\phi}_{u}(r)}\left(r\eta_{u}^{2}t\Tr{H_{u}^{2}}\right)^{m}.
\end{align}
We can approximate $\wt{\phi}_{u}(r)$ by approximating its derivative
\begin{align*}
    \partial_{r}\wt{\phi}_{u}(r)&=-\left[1+O\left(\frac{\log^{2}N}{Nt}\right)\right]\frac{\eta_{u}^{2}}{tr^{2}}.
\end{align*}
Inserting this into $D_{m}(u)$ we obtain
\begin{align*}
    D_{2m-1}(u)&=\left[1+O\left(\frac{\log^{2} N}{\sqrt{Nt}}\right)\right]\sqrt{\frac{4\pi}{N\Tr{H_{u}^{2}}}}d_{N,2m}(u)(\eta_{u}^{2}t\Tr{H_{u}^{2}})^{m}e^{-N(m-1/2)\phi_{u}}\\
    &\times\frac{N}{4\pi t}\int_{Nt/\log^{2}N}^{\infty}\frac{1}{r^{m+1}}e^{-\frac{N\eta_{u}^{2}}{2tr}}dr\\
    &=\left[1+O\left(\frac{\log^{2}N}{\sqrt{Nt}}\right)\right]\frac{(m-1)!2^{m-1}d_{N,2m}(u)}{\sqrt{\pi}}\left(\frac{t^{2}\Tr{H_{u}^{2}}}{N}\right)^{m-1/2}e^{-N(m-1/2)\phi_{u}}.
\end{align*}
Now we note that (this follows from the duplication formula for the Gamma function and the functional equation $G(z+1)=\Gamma(z)G(z)$, see \cite[eq. (C.5)]{serebryakov_schur_2023})
\begin{align*}
    \prod_{j=1}^{m-1}(2j)!&=\frac{2^{m(m-1)}}{\pi^{m/2}}\frac{G(m+1/2)G(m+1)}{G(1/2)},
\end{align*}
where $G(z)$ is the Barnes G-function. Therefore we can extend $d_{N,2m}$ to all real $m$ to obtain
\begin{align*}
    D_{2m-1}(u)&=\left[1+O\left(\frac{\log^{2}N}{\sqrt{Nt}}\right)\right]d_{N,2m-1}(u)e^{-N(m-1/2)\phi_{u}}.
\end{align*}

\section{Proof of \Cref{thm1}}\label{sec5}
Throughout this section we set $\eta=N^{-1-\epsilon}$. We begin by reducing the problem to obtaining a bound for the expectation value of a trace of the resolvent of the Hermitisation. We expect that there are a finite number of eigenvalues in a ball of radius $N^{-1/2}$ centred at $z_{0}$, which suggests that we can afford to take a union bound to estimate the minimum:
\begin{align}
    P\left(\min_{\sqrt{N}|z_{n}-z_{0}|<r}s_{N-1}(z_{n})<\eta\right)&\leq \sum_{n=1}^{N}P\left(s_{N-1}(z_{n})<\eta,\,|z_{n}-z_{0}|<rN^{-1/2}\right).
\end{align}
Now we note that if $s_{N-1}(z_{n})<\eta$, then we have
\begin{align}
    f_{\eta}(z_{n})&:=\eta^{2}\tr H_{z_{n}}(\eta)-1=\sum_{m=1}^{N-1}\frac{\eta^{2}}{s_{m}^{2}(z_{n})+\eta^{2}}\geq\frac{1}{2}.
\end{align}
If $g_{z_{0}}(z)=g\left(\frac{\sqrt{N}(z-z_{0})}{r}\right)$ and $g$ is a smooth function such that $g(z)=1$ for $|z|<1$ and $g(z)=0$ for $|z|>2$, then by Markov's inequality we obtain
\begin{align}
    P\left(\min_{\sqrt{N}|z_{n}-z_{0}|<r}s_{N-1}(z_{n})<\eta\right)&\leq2\mbb{E}\left[\sum_{n=1}^{N}f_{\eta}(z_{n})g_{z_{0}}\left(z_{n}\right)\right].
\end{align}
Now that we have a sum over eigenvalues, we can use Girko's formula to reduce to the Gauss-divisible case.
\begin{lemma}\label{lem:comp}
Let $t>0$ and $A$ and $B$ be $t$-matching. Then for any $\xi>0$ we have
\begin{align}
    \left|\mbb{E}_{A}\left[\sum_{n=1}^{N}f_{\eta}(z_{n})g_{z_{0}}(z_{n})\right]-\mbb{E}_{B}\left[\sum_{n=1}^{N}f_{\eta}(z_{n})g_{z_{0}}(z_{n})\right]\right|&\leq N\eta+N^{\xi}\left(\frac{t}{(N\eta)^{36}}+\frac{1}{N^{1/2}(N\eta)^{42}}\right).\label{eq:comp}
\end{align}
\end{lemma}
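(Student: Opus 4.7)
The plan is a Lindeberg-type Green function comparison. The goal is to rewrite both expectations in (5.6) as expectations of smooth functionals of the Hermitised resolvent $G_{z}(i\eta)$, and then swap the entries of $A$ for those of $B$ one at a time, using the moment matching in \Cref{def2} up to third order and a careful tracking of powers of $\eta^{-1}$ for the fourth- and fifth-order remainders. Three ingredients will be needed: Girko's formula, the resolvent-perturbation identity from the proof of \Cref{cor}, and the a priori bounds of \Cref{lem:aPriori} at the sub-microscopic scale $\eta=N^{-1-\epsilon}$.

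The first step is to apply Green's identity to $F(z)=f_{\eta}(z)g_{z_{0}}(z)$,
\begin{align*}
\sum_{n}F(z_{n})&=\frac{1}{4\pi}\int_{\mbb{C}}\Delta F(z)\,\log\det(|A-z|^{2})\,d^{2}z,
\end{align*}
and to regularise the log-determinant via $\log\det(|A-z|^{2})=\log\det(|A-z|^{2}+\eta^{2})-\sum_{j}\log(1+\eta^{2}/s_{j}(z)^{2})$. The regularisation remainder is dominated by the contribution of the smallest singular value, so by \Cref{prop:leastSV} its expectation is bounded by $CN\eta$, accounting for the first term on the right-hand side of (5.6). Using the block decomposition $\tr G_{z}(i\eta)=2i\eta\,\tr H_{z}(\eta)$, $f_{\eta}(z)$ is a smooth function of $G_{z}(i\eta)$, and the identities $\partial_{z}H_{z}=H_{z}(A-z)^{*}H_{z}$, $\partial_{\bar z}H_{z}=H_{z}(A-z)H_{z}$ turn $\Delta F(z)$ and $\log\det(|A-z|^{2}+\eta^{2})$ into smooth functionals of products of at most three copies of $G_{z}(i\eta)$ interleaved with $E_{2,N},E_{2,N}^{*}$, integrated over $z$ in a disk of area $O(N^{-1})$ around $z_{0}$.

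Next I would perform the Lindeberg swap, changing the $2N^{2}$ real parameters of $A$ into those of $B$ one at a time. At each swap I expand the integrand in $\Re a_{jk}$ (or $\Im a_{jk}$) to fifth Taylor order using the resolvent identity from the proof of \Cref{cor}. The $t$-matching assumption ensures that the $\mbb{E}_{A}$ and $\mbb{E}_{B}$ values of the Taylor coefficients of orders $0,1,2,3$ agree exactly; the order-four coefficients differ by at most $Ct/N^{2}$; and the fifth-order remainder is controlled by $|a_{jk}|\prec N^{-1/2}$. Summing over the $N^{2}$ entries produces contributions of the form $(t+N^{-1/2})\cdot(\text{a priori bound on resolvent products})$.

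The hard part is that $\eta=N^{-1-\epsilon}$ lies well below the regime $N^{-1+\xi}$ in which the local laws \Cref{prop:ll1} and \Cref{prop:ll2} apply, so the resolvent factors in the error terms cannot be replaced by deterministic approximations. Instead one must use the a priori bounds of \Cref{lem:aPriori}, extended to the swapped matrices through \Cref{cor}, which give matrix-element estimates of the shape $|w|^{k}/(N^{(k+1)/2}\eta^{2k})$ for a product of $k$ resolvents. The fourth-order Taylor terms contain products of up to seven resolvents (three from $\Delta F$ plus four from the Lindeberg expansion), and the improved block-structure bounds of \Cref{prop:ll2} must be used whenever the insertions $E_{2,N},E_{2,N}^{*}$ appear in order to keep the accumulated power of $\eta^{-1}$ under control; carrying out this accounting cleanly is the main technical step, and produces the exponent $36$ for the $t/N^{2}$ fourth-order contribution and $42$ for the $N^{-1/2}$ fifth-order remainder, which are the remaining two terms in (5.6).
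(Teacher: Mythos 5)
Your outline shares the high-level structure of the paper's argument (Girko/log-determinant representation, a Lindeberg swap controlled by the $t$-matching conditions, a priori bounds at sub-microscopic scale), but it misses the decisive intermediate-scale device that makes the argument close, and as a result the two exponents $36$ and $42$ in the target bound are not actually produced by your scheme.

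The crucial difference is the choice of regularisation scale. You regularise the log-determinant at the scale $\eta$ itself, writing $\log\det(|A-z|^{2})=\log\det(|A-z|^{2}+\eta^{2})-\sum_{j}\log(1+\eta^{2}/s_{j}(z)^{2})$ and claiming the remainder is $O(N\eta)$ in expectation. That estimate does not hold. On the high-probability event where the a priori bounds of \Cref{lem:aPriori} apply, $\|\Delta_{z}(f_{\eta}g_{z_{0}})\|_{\infty}$ is of order $N^{\xi}/(N\eta^{2})$, the $z$-integration contributes an area factor $\sim N^{-1}$, and the expectation of $\sum_{j}\log(1+\eta^{2}/s_{j}^{2})$ (controlled via \Cref{prop:leastSV} together with an overcrowding-type estimate) is not $o(1)$ at scale $\eta$. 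Putting these factors together gives something of size $N^{\xi+2\epsilon}$ up to logarithms — diverging, not $O(N\eta)$. The paper avoids this by splitting the Girko $\sigma$-integral at a much smaller scale $\eta_{1}=N^{-1-\nu}$ with $\nu=6\epsilon+2\xi$ (equivalently $\eta_{1}=N^{5-2\xi}\eta^{6}$). After removing small disks around eigenvalues and passing to a Riemann sum, the small-$\sigma$ integral $I(0,\eta_{1})$ is bounded by $N^{\xi-\nu/2}/(N^{2}\eta^{2})$, and it is precisely the choice $\nu=6\epsilon+2\xi$ that makes this $\leq N\eta$. Your regularisation at $\eta$ corresponds to $\nu=\epsilon$, which is too large a scale.

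The second missing step is the integration by parts in $\sigma$ that reduces the large-$\sigma$ piece $I(\eta_{1},N^{D})$ to $\Re\int f_{\eta}(z)g_{z_{0}}(z)\,\tr G_{z}(i\eta_{1})FG_{z}(i\eta_{1})F^{*}\,dz$. This is the single two-resolvent functional on which the Lindeberg replacement is actually performed. Because the resolvents there sit at scale $\eta_{1}\sim\eta^{6}$ rather than $\eta$, the fourth-order replacement error $\sim N^{1+\xi}t/(N\eta_{1})^{6}$ (after integrating $g_{z_{0}}$) becomes $N^{\xi}t/(N\eta)^{36}$, and the fifth-order remainder $\sim N^{1/2+\xi}/(N\eta_{1})^{7}$ becomes $N^{\xi}/(N^{1/2}(N\eta)^{42})$. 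In other words the exponents $36=6\cdot6$ and $42=6\cdot7$ are a direct bookkeeping consequence of the relation $\eta_{1}\sim\eta^{6}$; they cannot emerge from a Lindeberg expansion carried out with resolvents at scale $\eta$, no matter how carefully one counts resolvent factors or invokes \Cref{prop:ll2}. Your proposal asserts these exponents without a mechanism that would produce them, which is the signal that the intermediate scale is missing from your plan.
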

\begin{proof}
Let 
\begin{align}
    \mc{L}&:=\sum_{n=1}^{N}f_{\eta}(z_{n})g_{z_{0}}(z_{n}).
\end{align}
Note that
\begin{align*}
    f_{\eta}(z)&=\eta\Im\tr G_{z}(i\eta)-1,\\
    \partial_{z}f_{\eta}(z)&=\eta\Im\tr G_{z}^{2}(i\eta)F,\\
    \Delta_{z}f_{\eta}(z)&=\eta\Im\tr\left(G^{2}_{z}(i\eta)FG_{z}(i\eta)F^{*}+G_{z}(i\eta)FG^{2}_{z}(i\eta)F^{*}\right).
\end{align*}
If we cover the support of $g_{z_{0}}$ by disks of radius $\eta/N^{2}$, then by \Cref{lem:aPriori} and a union bound we can ensure that for any $\xi,D>0$ the event
\begin{align}
    \mc{E}_{\xi}&=\bigcap_{|z-z_{0}|<rN^{-1/2}}\left\{|f_{\eta}(z)|<N^{\xi},\,|\partial_{z}f_{\eta}(z)|<\frac{N^{\xi}}{N^{1/2}\eta},\,|\Delta_{z}f_{\eta}(z)|<\frac{N^{\xi}}{N\eta^{2}}\right\}
\end{align}
holds with probability $1-N^{-D}$. Therefore we have
\begin{align}
    \mbb{E}\left[\mc{L}\right]&=\mbb{E}\left[1_{\mc{E}_{\xi}}\mc{L}\right]+O(N^{-D}).
\end{align}
Since $f_{\eta}$ and $g$ are $C^{2}$ we can apply Girko's formula to the sum over $z_{n}$:
\begin{align}
    \mc{L}&=-\frac{1}{4\pi}\int_{\mbb{C}}\Delta_{z}\left(f_{\eta}(z)g_{z_{0}}\left(z\right)\right)\int_{0}^{N^{D}}\Im\tr G_{z}(i\sigma)d\sigma dz+O(N^{-D}).
\end{align}
The error term holds in the sense of stochastic domination and follows from the fact that $\|A\|\leq C$ with probability $1-N^{-D}$ and $\|\Delta_{z}(f_{\eta}g_{z_{0}})\|_{1}\leq CN^{2+2\epsilon}$ deterministically. We stress that $f_{\eta}$ is a \textit{random} function depending on $A$, unlike in the usual applications of Girko's formula. Therefore the expectation acts on both $f_{\eta}$ and $\Im\tr G_{z}(i\sigma)$.

We fix a $\nu>0$ and $\eta_{1}=N^{-1-\nu}$ to be specified later and split the integral over $\sigma$ into three regimes:
\begin{align}
    \mc{L}&=-\frac{1}{4\pi}\left[I(0,\eta_{1})+I(\eta_{1},N^{D})\right]+O(N^{-D}),\\
    I(\eta_{1},\eta_{2})&:=\int_{\mbb{C}}\Delta_{z}\left(f_{\eta}(z)g_{z_{0}}(z)\right)\int_{\eta_{1}}^{\eta_{2}}\Im\tr G_{z}(i\sigma)d\sigma dz.
\end{align}
We estimate the small $\sigma$ integral as in \cite[Section 4]{cipolloni_edge_2021} and \cite[Section 5.2]{he_edge_2023}. First we note that $I(0,\eta)$ satisfies a deterministic bound
\begin{align}
    |I(0,\eta_{1})|&\leq 4\pi\mc{L}+|I(\eta_{1},\infty)|\\
    &\leq CN\log N.
\end{align}
Then we remove disks of radius $\rho$ centred at the eigenvalues $z_{n}$ from the $z$-integral and use the bound
\begin{align*}
    \left|\int_{B_{\rho}(z_{n})}\Delta_{z}(f_{\eta}g_{z_{0}})(z)\log\left(1+\frac{\eta_{1}^{2}}{s_{n}^{2}(z)}\right)dz\right|&\leq C\|\Delta_{z}(f_{\eta}g_{z_{0}})\|_{\infty}\rho^{2}(|\log \eta|+|\log \rho|),
\end{align*}
to obtain
\begin{align}
    I(0,\eta_{1})&=\int_{\mc{D}}\Delta_{z}(f_{\eta}g_{z_{0}})(z)\sum_{n=1}^{N}\log\left(1+\frac{\eta_{1}^{2}}{s^{2}_{n}(z)}\right) dz+O\left(\frac{N^{2}\rho^{2}(\log N+|\log \rho|)}{\eta^{2}}\right),
\end{align} 
where $\mc{D}=\mbb{C}\setminus\cup_{n}B_{\rho}(z_{n})$ and the error term holds in the sense of stochastic domination. Now cover $\mc{D}\cap\textup{supp}\,g_{z_{0}}$ by disks of radius $\delta$ centred at $w_{j}\in\mc{N}_{\delta}$ and approrhomate the integral by a Riemann sum to obtain
\begin{align}
    I(0,\eta_{1})&=\pi\delta^{2}\sum_{w_{j}\in\mc{N}_{\delta}}\sum_{n=1}^{N}\Delta_{z}\left(f_{\eta}g_{z_{0}}\right)(w_{j})\log\left(1+\frac{\eta_{1}^{2}}{s^{2}_{n}(w_{j})}\right)+O\left(\frac{N^{2}\delta}{\eta^{3}\rho}\right).
\end{align}
If we choose $\rho=\eta^{3/2}/\sqrt{N}$ and $\delta=\rho\eta^{4}/N$ then the error is $O(N\eta\log N)$. Instead of a Riemann sum we could have approximated the integral by the Monte Carlo method as done by Tao--Vu \cite{tao_random_2015}.

Using \Cref{prop:leastSV} and a union bound we have
\begin{align}
    P\left(\min_{w_{j}\in\mc{N}_{\delta}}s_{N}(w_{j})<N^{-L}\right)&\leq \frac{1}{\delta^{2}N^{L/2}}.
\end{align}
Using the deterministic bound on $I(0,\eta_{1})$ we can restrict to this event, which we denote by $\mc{F}$. Recalling the bounds that define the event $\mc{E}_{\xi}$ we have
\begin{align*}
    \mbb{E}\left[1_{\mc{E}_{\xi}}I(0,\eta_{1})\right]&\leq\frac{N^{\xi}}{N^{2}\eta^{2}}\max_{w_{j}\in\mc{N}_{\delta}}\mbb{E}\left[1_{\mc{F}}\sum_{n=1}^{N}\log\left(1+\frac{\eta_{1}^{2}}{s_{n}^{2}(w_{j})}\right)\right].
\end{align*}
We split the sum over $s_{n}$ into two:
\begin{align}
    \mbb{E}\left[1_{\mc{F}}\sum_{n=1}^{N}\log\left(1+\frac{\eta_{1}^{2}}{s_{n}^{2}(w_{j})}\right)\right]&\leq\mbb{E}\left[1_{\mc{F}}\sum_{N^{-L}<s_{n}(w_{j})<N^{\nu/2}\eta_{1}}\log\left(1+\frac{\eta_{1}^{2}}{s_{n}^{2}(w_{j})}\right)\right]\label{eq:E1}\\
    &+\mbb{E}\left[1_{\mc{F}}\sum_{s_{n}(w_{j})>N^{\nu/2}\eta_{1}}\log\left(1+\frac{\eta_{1}^{2}}{s^{2}_{n}(w_{j})}\right)\right]\label{eq:E2}.
\end{align}
For the small singular values we use the fact that $|\{n:s_{n}(w_{j})<N^{\nu/2}\eta_{1}\}|<N^{\xi/2}$ with probability $1-N^{-D}$ and $P(s_{N}(w_{j})<N^{\nu/2}\eta_{1})<N^{-\nu/2}$ to obtain
\begin{align}
    \eqref{eq:E1}&\leq CN^{\xi/2}\log N\cdot P(s_{N}(w_{j})<N^{\nu/2}\eta_{1})\nonumber\\
    &\leq CN^{(\xi-\nu)/2}.
\end{align}
For the singular values greater than $N^{\nu/2}\eta_{1}$, we observe that $N^{\nu}\eta_{1}=1$ and use \Cref{lem:aPriori}:
\begin{align*}
    \eqref{eq:E2}&\leq 2\eta_{1}\mbb{E}\left[\sum_{s_{n}(w_{j})>N^{\nu/2}\eta_{1}}\frac{N^{\nu/2}\eta_{1}}{s_{n}^{2}(w_{j})+(N^{\nu}\eta_{1})^{2}}\right]\nonumber\\
    &\leq N^{1+\xi}\eta_{1}\\
    &\leq N^{\xi-\nu}.
\end{align*}
Thus we find (replacing multiples of $\xi$ as necessary)
\begin{align*}
    \mbb{E}\left[1_{\mc{E}_{\xi}}I(0,\eta_{1})\right]&\leq\frac{N^{\xi-\nu/2}}{N^{2}\eta^{2}}.
\end{align*}
If we choose $\nu=6\epsilon+2\xi$ (i.e. $\eta_{1}=N^{5-2\xi}\eta^{6}$) then we have
\begin{align*}
    \mbb{E}\left[1_{\mc{E}_{\xi}}I(0,\eta_{1})\right]&\leq N\eta.
\end{align*}

In the large $\sigma$ regime we integrate by parts:
\begin{align}
    I(\eta_{1},N^{D})&=\int_{\mbb{C}}f_{\eta}(z)g_{z_{0}}(z)\Im\int_{\eta_{1}}^{N^{D}}\Delta_{z}\tr G_{z}(i\sigma)d\sigma dz\nonumber\\
    &=\int_{\mbb{C}}f_{\eta}(z)g_{z_{0}}(z)\Im\int_{\eta_{1}}^{\infty}\tr\left(G^{2}_{z}(i\sigma)FG_{z}(i\sigma)F^{*}+G_{z}(i\sigma)FG^{2}_{z}(i\sigma)F^{*}\right)d\sigma dz\nonumber+O(N^{-D})\\
    &=-\int_{\mbb{C}}f_{\eta}(z)g_{z_{0}}(z)\Re\int_{\eta_{1}}^{\infty}\partial_{\sigma}\tr G_{z}(i\sigma)FG_{z}(i\sigma)F^{*} d\sigma dz\nonumber+O(N^{-D})\\
    &=\Re\int_{\mbb{C}}f_{\eta}(z)g_{z_{0}}(z)\tr G_{z}(i\eta_{1})FG_{z}(i\eta_{1})F^{*}dz\nonumber+O(N^{-D})\\
    &=:\Re\int_{\mbb{C}}f_{\eta}(z)g_{z_{0}}(z)h_{\eta_{1}}(z)dz+O(N^{-D}).
\end{align}
Since $|f_{\eta}(z)h_{\eta_{1}}(z)|\leq N^{2}/\eta^{2}_{1}$, we can remove the indicator of the event $\mc{E}_{\xi}$:
\begin{align}
    \left|\mbb{E}_{A}\left[\mc{L}\right]-\mbb{E}_{B}\left[\mc{L}\right]\right|&\leq\int_{\mbb{C}}g_{z_{0}}(z)\left|\mbb{E}_{A}\left[f_{\eta}(z)h_{\eta_{1}}(z)\right]-\mbb{E}_{B}\left[f_{\eta}(z)h_{\eta_{1}}(z)\right]\right|dz+CN\eta.
\end{align}
We are now in a position to apply the Lindenberg strategy. Since this is standard we will only give a sketch. It is enough to consider the difference in expectation value when $A$ and $B$ differ in one entry. Assume that this is the entry $(j,k)$ and let $A^{(0)},A^{(1)},A^{(2)}$ denote the matrices whose $(j,k)$ entry is $0,a_{jk},b_{jk}$ respectively. Then we have
\begin{align}
    G^{(1)}_{z}(i\eta)&=\sum_{p=0}^{4}a_{jk}^{p}(G^{(0)}_{z}(i\eta)\Delta_{jk})^{p}G^{(0)}_{z}(i\eta)+a_{jk}^{5}(G^{(0)}_{z}(i\eta)\Delta_{jk})^{5}G^{(1)}_{z}(i\eta),\\
    G^{(2)}_{z}(i\eta)&=\sum_{p=0}^{4}b_{jk}^{p}(G^{(0)}_{z}(i\eta)\Delta_{jk})^{p}G^{(0)}_{z}(i\eta)+b_{jk}^{5}(G^{(0)}_{z}(i\eta)\Delta_{jk})^{5}G^{(2)}_{z}(i\eta),
\end{align}
where $\Delta_{jk}$ is the matrix with -1 in the entry $(j,N+k)$ and zero elsewhere. Thus we have 
\begin{align}
    \tr(X\Delta_{jk})^{p}Y&=(X_{N+k,j})^{p-1}(YX)_{N+k,j}.
\end{align}
We insert these expansions into $f_{\eta}(z)$ and $h_{\eta}(z)$ to obtain polynomials in $a_{jk}$ or $b_{jk}$. Using the deterministic bound $\|G_{z}(i\eta)\|<\eta^{-1}$ and \Cref{cor} we restrict to the event that the estimates in \Cref{lem:aPriori} hold for $G^{(0)}$ and then use these estimates and the bound $|a_{jk}|\prec N^{-1/2}$ to truncate to fourth order. Let us illustrate this with $h_{\eta_{1}}$, for which we have
\begin{align*}
    h_{\eta_{1}}(z)&=\sum_{p,q=0}^{4}a_{jk}^{p+q}\tr(G^{(0)}\Delta_{jk})^{p}G^{(0)}F(G^{(0)}\Delta_{jk})^{q}G^{(0)}F^{*}\\
    &+\sum_{p=0}^{4}a_{jk}^{5+p}\tr(G^{(0)}\Delta_{jk})^{p}G^{(0)}F(G^{(0)}\Delta_{jk})^{5}G^{(1)}F^{*}\\
    &+\sum_{p=0}^{4}a_{jk}^{5+p}\tr(G^{(0)}\Delta_{jk})^{5}G^{(1)}F(G^{(0)}\Delta_{jk})^{p}G^{(0)}F^{*}\\
    &+a_{jk}^{10}\tr(G^{(0)}\Delta_{jk})^{5}G^{(1)}F(G^{(0)}\Delta_{jk})^{5}G^{(1)}F^{*}.
\end{align*}
Consider the expectation of a term in the second line. If $p>0$ then we use \eqref{eq:ext4} and \eqref{eq:ext5}:
\begin{align*}
    &\left|\mbb{E}\left[a_{jk}^{5+p}f^{(0)}_{\eta}(z)\tr (G^{(0)}\Delta_{jk})^{p}G^{(0)}F(G^{(0)}\Delta_{jk})^{5}G^{(1)}F^{*}\right]\right|\\
    &\leq \frac{CN^{\xi}}{N^{(5+p)/2}}\mbb{E}\left[\left|(G^{(0)}_{N+k,j})^{3+p}(G^{(0)}FG^{(0)})_{N+k,j}(G^{(1)}F^{*}G^{(0)})_{N+k,j}\right|\right]\\
    &\leq\frac{CN^{\xi}}{N^{(3+p)/2}(N\eta_{1})^{7+p}},
\end{align*}
where we use another resolvent expansion for the term $(G^{(1)}F^{*}G^{(0)})_{N+k,j}$. If $p=0$, then we use \eqref{eq:ext4} and \eqref{eq:ext6}:
\begin{align*}
    &\left|\mbb{E}\left[a_{jk}^{5}f^{(0)}_{\eta}(z)\tr (G^{(0)}\Delta_{jk})^{5}G^{(1)}F^{*}G^{(0)}F\right]\right|\\
    &\leq\frac{CN^{\xi}}{N^{5/2}}\mbb{E}\left[\left|(G^{(0)}_{N+k,j})^{4}(G^{(1)}F^{*}G^{(0)}FG^{(0)})_{N+k,j}\right|\right]\\
    &\leq\frac{CN^{\xi}}{N^{3/2}(N\eta_{1})^{7}},
\end{align*}
where again we use another resolvent expansion for $G^{(1)}$.

Thus the difference in expectation values due to $h_{\eta_{1}}$ is
\begin{align}
    &\left|\mbb{E}_{A}\left[f^{(0)}_{\eta}(z)h_{\eta_{1}}(z)\right]-\mbb{E}_{B}\left[f^{(0)}_{\eta}(z)h_{\eta_{1}}(z)\right]\right|\nonumber\\
    &=t\sum_{p+q=4}\mbb{E}\left[f^{(0)}_{\eta}(z)\tr(G^{(0)}\Delta_{jk})^{p}G^{(0)}F(G^{(0)}\Delta_{jk})^{q}G^{(0)}F^{*}\right]\nonumber\\
    &+O\left(\frac{N^{\xi}}{N^{3/2}(N\eta_{1})^{7}}\right).
\end{align}
Proceeding in this manner to estimate the remaining terms that contribute to the difference in expectation values and summing over all $N^{2}$ elements, we find
\begin{align}
    \left|\mbb{E}_{A}\left[f_{\eta}(z)h_{\eta_{1}}(z)\right]-\mbb{E}_{B}\left[f_{\eta}(z)h_{\eta_{1}}(z)\right]\right|&\leq\frac{N^{1+\xi}t}{(N\eta_{1})^{6}}+\frac{N^{1/2+\xi}}{(N\eta_{1})^{7}}.
\end{align}
Since $\|g_{z_{0}}\|_{1}\leq CN^{-1}$, $\eta_{1}=N^{5-2\xi}\eta^{6}$ and $\xi$ is arbitrary, we obtain \eqref{eq:comp}.
\end{proof}

In the Gauss-divisible case, we can use the partial Schur decomposition to bound the expectation value. 
\begin{lemma}\label{lem:GaussDivisible}
Let $\tau\in(0,1/3)$, $t=N^{-1/3+\tau}$ and $B=X+\sqrt{t}Y$, where $X$ is a non-Hermitian Wigner matrix and $Y$ is a Ginibre matrix. Then for any $\omega>0$ and $z_{0}\in\mbb{C},\,u_{0}\in\mbb{R}$ such that $|z_{0}|,|u_{0}|<1-\omega$ we have
\begin{align}
    \mbb{E}_{B}\left[\sum_{n=1}^{N}f_{\eta}(z_{n})g_{z_{0}}(z_{n})\right]&\leq CN^{2}\eta^{2},
\end{align}
for complex matrices, and
\begin{align}
    \mbb{E}_{B}\left[\sum_{n=1}^{N_{\mbb{C}}}f_{\eta}(z_{n})g_{z_{0}}(z_{n})\right]&\leq CN^{2}\eta^{2},\\
    \mbb{E}_{B}\left[\sum_{n=1}^{N_{\mbb{R}}}f_{\eta}(u_{n})g_{u_{0}}(u_{n})\right]&\leq CN^{2}\eta^{2}\log(N\eta),
\end{align}
for real matrices.
\end{lemma}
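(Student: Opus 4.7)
The plan is to apply the partial Schur decomposition to convert the sum into an integral over the Schur parameters $(z,\mbf{v},\mbf{w},B')$ and then estimate the resulting expectation by the Grassmann/Hubbard-Stratonovich technique of \Cref{sec4}. The key observation is that under the decomposition $B=R(\mbf{v})\begin{pmatrix}z_n & \mbf{w}^*\\ 0 & B'\end{pmatrix}R(\mbf{v})$, the non-zero singular values of $B-z_n$ are exactly the eigenvalues of the $(N-1)\times(N-1)$ matrix $\mbf{w}\mbf{w}^*+|B'_z|^2$, so
\begin{align*}
    f_\eta(z_n)&=\eta^2\tr(\mbf{w}\mbf{w}^*+|B'_z|^2+\eta^2)^{-1}\leq\eta^2\tr(|B'_z|^2+\eta^2)^{-1}
\end{align*}
by Cauchy interlacing. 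Since the right-hand side no longer depends on $\mbf{w}$ and is non-negative, it is an admissible integrand for \eqref{eq:complexPartialSchur}, reducing the complex case to bounding
\begin{align*}
    \frac{N}{2\pi^2 t}\int_{\mbb{C}}g_{z_0}(z)K(z)\mbb{E}_{\mu_z}\mbb{E}_{Y'}\left[|\det B'_z|^2\cdot\eta^2\tr(|B'_z|^2+\eta^2)^{-1}\right]dz,
\end{align*}
with analogous reductions for the two real cases using \eqref{eq:real-real} and \eqref{eq:real-complex}.

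For the inner expectation I would extend the Grassmann calculation of \Cref{sec4}: represent $|\det B'_z|^2$ by an anticommuting integral over two doublets, introduce the trace insertion through the identity $\tr(A^{-1})=\partial_\alpha|_{\alpha=0}\log\det(A+\alpha)$ (with the derivative applied after the Gaussian $Y'$-average), and linearise the quartic Grassmann interaction by a Hubbard-Stratonovich transformation. Changing variables to the auxiliary singular values $\bs\sigma$ and radial parameter $r$ puts the integrand in the same form as the one following \eqref{eq:phi}, with an additional factor coming from the trace insertion. The saddle-point analysis then proceeds as in \Cref{sec4}: by \eqref{eq:phiBound} the $\bs\sigma$-integral localises to the window $[0,\sqrt{t/N}\log N]$ around $\eta_z^{(\mbf{v})}$, where the trace factor may be replaced by a deterministic approximation. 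The resulting bound on the inner expectation is $O(N^2\eta^2)$ (possibly up to logarithmic factors), matching the heuristic that $f_\eta(z_n)\sim(N\eta)^2$ when $s_{N-1}(z_n)\sim 1/N$. Combining with $K(z)\mbb{E}_{\mu_z}\mbb{E}_{Y'}[|\det B'_z|^2]\sim t$ (which follows by applying \eqref{eq:complexPartialSchur} to $f\equiv 1$ together with the circular law), the prefactor $N/(2\pi^2 t)$, and $\int g_{z_0}(z)dz=O(N^{-1})$, yields the claimed $CN^2\eta^2$.

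The real-real and real-complex cases proceed along the same lines using \eqref{eq:real-real} and \eqref{eq:real-complex}. The real-real case requires the odd-power regularisation $|\det B'_u|=\lim_{\eta'\to 0}|\det B'_u|^2/\det^{1/2}(|B'_u|^2+\eta'^2)$ from \Cref{sec4}, which introduces an extra radial integral; the additional $\log(N\eta)$ factor arises because the Vandermonde and integration measure near $\bs\sigma=0$ are weaker in this case, producing a logarithmic divergence in the small-$\bs\sigma$ integral that is cut off at the scale $\eta$. The main obstacle is carrying the trace insertion cleanly through the saddle-point analysis of \Cref{sec4}, particularly tracking the logarithmic correction in the real-real case through the interaction of the $\eta'\to 0$ regularisation with the small-$\bs\sigma$ edge.
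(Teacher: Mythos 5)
Your approach is essentially the paper's: apply the partial Schur decomposition, bound $f_\eta$ from above by dropping the rank-one perturbation $\mbf{w}\mbf{w}^*$, and then run the supersymmetric/saddle-point machinery of \Cref{sec4} on the product of $f_\eta$ with the determinant term. The specific regularisations differ slightly but lead to the same structure. Two details worth flagging: in the complex case the paper actually uses the even cruder bound $f_\eta(z)\leq\eta^2\tr|B'_z|^{-2}$ (taking $\eta\to0$ inside the trace) because the resulting $q$-integral is simpler, whereas in the real--real case the paper bounds $\det|B'_u|\leq\det^{1/2}(|B'_u|^2+\eta^2)$ with the \emph{same} $\eta$, which lets it write the product directly as $-2\eta^2\partial_{\eta^2}\det^{-1/2}(|B'_u|^2+\eta^2)\cdot\det(|B'_u|^2+\eta^2)$ without an extra $\eta'\to0$ limit; this keeps the $\eta$ regularisation built into the commuting ($\mbf{x}$-radial, i.e.\ $r$-variable) sector from the start.

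One substantive misattribution: the $\log(N\eta)$ factor in the real--real case does \emph{not} come from a weaker Vandermonde/integration measure near $\bs\sigma=0$. In the paper it arises from the radial $r$-integral in the commuting sector: the integrand there behaves like $\tfrac{1}{r}(1+\tfrac{N\eta^2 r}{t})e^{-\frac{N}{2}(\eta^2 r/t+Ct/r)}dr$, and the $1/r$ over the window between the lower cutoff $r\sim Nt/\log^2 N$ and the exponential cutoff at $r\sim t/(N\eta^2)$ produces a $|\log(N\eta)|$. The $\bs\sigma$-integral (the Grassmann/Hubbard--Stratonovich eigenvalues) is still localised to a $\sqrt{t/N}\log N$ window near $\eta_u$ by \eqref{eq:phiBound} and contributes no logarithm. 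If you actually carried out the calculation along your route you would eventually arrive at the same $r$-integral, but the explanation you gave for the logarithm points to the wrong sector and the wrong part of the phase space.
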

In the case of complex matrices or real eigenvalues of real matrices we can reduce $t$ to $t\geq N^{-1+\tau}$ using the improved local laws in \Cref{prop:ll2}, since in these cases we only need to estimate traces of expressions involving at most two resolvents.

We defer the proof of \Cref{lem:GaussDivisible} to the next section and conclude the proof of \Cref{thm1}. Let $A$ be a complex non-Hermitian Wigner matrix. For $t=N^{-1/3+\tau}$ there exists a non-Hermitian Wigner matrix $\wt{A}$ such that $A$ and $B=\frac{1}{\sqrt{1+t}}\left(\wt{A}+\sqrt{t}Y\right)$ are $t$-matching, where $Y$ is a complex Ginibre matrix. Then using \Cref{lem:comp,lem:GaussDivisible} we find
\begin{align}
    P\left(\min_{\sqrt{N}|z_{n}-z_{0}|<r}s_{N-1}(z_{n})<\eta\right)&\leq\mbb{E}_{A}\left[\sum_{n=1}^{N}f_{\eta}(z_{n})g_{z_{0}}(z_{n})\right]\nonumber\\
    &\leq\mbb{E}_{B}\left[\sum_{n=1}^{N}f_{\eta}(z_{n})g_{z_{0}}(z_{n})\right]+N\eta+\frac{N^{\xi}t}{(N\eta)^{36}}\nonumber\\
    &\leq CN^{2}\eta^{2}+N\eta+\frac{N^{\xi}t}{(N\eta)^{36}}\nonumber\\
    &\leq N\eta+\frac{N^{\xi}t}{(N\eta)^{36}}.
\end{align}
The real case follows in the same way.

\section{Proof of \Cref{lem:GaussDivisible}}\label{sec6}
The proof is based on explicit calculations similar to those in \cite{maltsev_bulk_2023} and \cite{osman_bulk_2024} and in \Cref{sec4}. We will skip the details in some steps and refer to \cite{maltsev_bulk_2023} and \cite{osman_bulk_2024}. We have to consider separately three cases: i) complex entries; ii) real entries and complex eigenvalues; iii) real entries and real eigenvalues.

\subsection{Complex Entries}
Let $\mc{E}$ denote the event that $X$ satisfies the local laws in \Cref{prop:ll1,prop:ll2}. Then $P(\mc{E}^{c})\leq N^{-D}$ for any $D$ and so
\begin{align}
    \mbb{E}_{B}\left[\sum_{n=1}^{N}f_{\eta}(z_{n})g_{z_{0}}(z_{n})\right]&=\mbb{E}_{B}\left[1_{\mc{E}}\sum_{n=1}^{N}f_{\eta}(z_{n})g_{z_{0}}(z_{n})\right]+N^{1-D}.
\end{align}
Henceforth we restrict ourselves to $X\in\mc{E}$.

From \eqref{eq:complexPartialSchur} we have
\begin{align}
    \mbb{E}_{Y}\left[\sum_{n=1}^{N}f_{\eta}(z_{n})g_{z_{0}}(z_{n})\right]&=\frac{N}{2\pi^{2}t}\int_{\mbb{C}}g_{z_{0}}(z)K(z)\mbb{E}_{\mu_{z}}\left[\mbb{E}_{Y'}\left[f_{\eta}(z)\det|B_{z}|^{2}\right]\right]dz,\label{eq:EY}
\end{align}
where $B'=X^{(\mbf{v})}+\sqrt{\frac{(N-1)t}{N}}Y'$ and $Y'$ is a complex Ginibre matrix of size $N-1$.

Let us first calculate the expectation over $Y'$. Expressed in terms of $B'$ the function $f_{\eta}$ takes the form
\begin{align}
    f_{\eta}(z)&=\eta^{2}\tr(|B'_{z}|^{2}+\mbf{w}\mbf{w}^{*}+\eta^{2})^{-1}\nonumber\\
    &\leq\eta^{2}\tr|B'_{z}|^{-2}.\label{eq:fBound}
\end{align}
We use the crude bound on the last line because the resulting integral is simpler to analyse. We anticipate that if $\eta\ll N^{-1}$ then not much is lost in doing so.

Replacing $f_{\eta}$ with its upper bound in \eqref{eq:fBound} we can write
\begin{align*}
    f_{\eta}(z)\det|B'_{z}|^{2}&\leq\eta^{2}\tr|B'_{z}|^{-2}\cdot\det|B'_{z}|^{2}\\
    &=\eta^{2}\lim_{\eta'\to0}\frac{1}{2\eta'}\frac{\partial}{\partial\eta'}\det\left(|B'_{z}|^{2}+\eta'^{2}\right).
\end{align*}
The function inside the limit on the second line is integrable with respect to $e^{-\frac{N}{t}\tr|B'-X^{(\mbf{v})}|^{2}}dB'$ uniformly in $0\leq\eta'\leq C$ and so we can take the limit outside the integral. By integration over anti-commuting variables we obtain
\begin{align}
    \mbb{E}_{Y'}\left[f_{\eta}(z)\det|B_{z}|^{2}\right]&\leq\frac{N\eta^{2}}{\pi t}\lim_{\eta'\to0}\frac{1}{2\eta'}\frac{\partial}{\partial\eta'}\int_{\mbb{C}}e^{-\frac{N}{t}|q+\eta'|^{2}}\det\left(|q|^{2}+|X^{(\mbf{v})}_{z}|^{2}\right)dq.\label{eq:expectation}
\end{align}
Writing $q=\sigma e^{i\theta}$ and integrating over $\theta\in[0,2\pi)$ the right hand side becomes
\begin{align}
    \frac{N\eta^{2}}{t}\lim_{\eta'\to0}\int_{0}^{\infty}e^{-\frac{N}{t}(\sigma^{2}+\eta'^{2})}\det\left(\sigma^{2}+|X^{(\mbf{v})}_{z}|^{2}\right)\left[\frac{2N\sigma }{t\eta'}I_{1}\left(\frac{2N\eta'\sigma}{t}\right)-\frac{N}{t}\right]\sigma d\sigma,
\end{align}
where $I_{1}(x)$ is the modified Bessel function of the second kind. Since $I_{1}(x)\sim x/2$ as $x\to0$, taking the limit inside the integral we obtain
\begin{align}
    \mbb{E}_{Y'}\left[f_{\eta}(z)\det|B'_{z}|^{2}\right]&\leq\frac{2N^{3}\eta^{2}}{t^{3}}\int_{0}^{\infty}e^{-N\phi^{(\mbf{v})}_{z}(\sigma)}\sigma^{3}d\sigma.
\end{align}
This is essentially the same integral that gives the average eigenvalue density. Using \eqref{eq:phiBound} we can restrict $\sigma$ to an $O\left(\sqrt{\frac{t}{N}}\log N\right)$ neighbourhood of the point $\eta^{(\mbf{v})}_{z}$, which gives
\begin{align}
    \mbb{E}_{Y'}\left[f_{\eta}(z)\det|B'_{z}|^{2}\right]&\leq CN^{5/2}t^{1/2}\eta^{2}e^{-N\phi^{(\mbf{v})}_{z}}.
\end{align}

From \cite[Section 7]{maltsev_bulk_2023} we have
\begin{align}
    K(z)&=\left[1+O\left(\frac{\log^{3}N}{\sqrt{Nt}}\right)\right]\sqrt{\frac{2\pi}{N\Tr{H^{2}_{z}}}}e^{N\phi_{z}}\leq\frac{Ct^{3/2}}{N^{1/2}}e^{N\phi_{z}}.
\end{align}
By the above bound, the bound in \eqref{eq:etaBound2} and Cramer's rule we have
\begin{align*}
    K(z)\mbb{E}_{Y'}\left[f_{\eta}(z)\det|B'_{z}|^{2}\right]&\leq C\left|\det(1_{2}\otimes\mbf{v}^{*})G_{z}(1_{2}\otimes\mbf{v})\right|\\
    &=C\left[\eta_{z}^{2}(\mbf{v}^{*}H_{z}\mbf{v})(\mbf{v}^{*}\wt{H}_{z}\mbf{v})+\left|\mbf{v}^{*}X_{z}H_{z}\mbf{v}\right|^{2}\right]
\end{align*}
In \cite{maltsev_bulk_2023} we argued that such quadratic forms concentrate with respect to $\mu_{z}$. Since we only have quadratic terms we can instead evaluate the expectation directly:
\begin{align*}
    \mbb{E}_{\mu_{z}}\left[(\mbf{v}^{*}F_{1}\mbf{v})(\mbf{v}^{*}F_{2}\mbf{v})\right]&=\frac{e^{\frac{N}{t}\eta_{z}^{2}}}{K(z)}\int_{-\infty}^{\infty}e^{\frac{iNp}{t}}\int_{\mbb{C}^{N}}e^{-\frac{N}{t}\mbf{x}^{*}\left(\eta_{z}^{2}+|X_{z}|^{2}+ip\right)\mbf{x}}(\mbf{x}^{*}F_{1}\mbf{x})(\mbf{x}^{*}F_{2}\mbf{x})d\mbf{x}dp\\
    &=\frac{1}{K(z)}\left(\frac{N}{\pi t}\right)^{N-1}\int_{-\infty}^{\infty}e^{\frac{iNp}{t}}\det^{-1}\left(\eta_{z}^{2}+|X_{z}|^{2}+ip\right)\\
    &\times\left[t^{2}\Tr{H_{z}(w_{p})F_{1}}\Tr{H_{z}(w_{p})F_{2}}+\frac{t^{2}}{N}\Tr{H_{z}(w_{p})F_{1}H_{z}(w_{p})F_{2}}\right]dp,
\end{align*}
where $w_{p}=\sqrt{\eta^{2}_{z}+ip}$. The extra terms do not affect the large $p$ behaviour so using the same argument as in \cite{maltsev_bulk_2023} for $K$ we can restrict $p$ to the region $|p|<\sqrt{\frac{t^{3}}{N}}\log N$. The traces can then be estimated using local laws. For example, with $F_{1}=X_{z}H_{z}$ and $F_{2}=H_{z}X_{z}^{*}$ we have
\begin{align*}
    \left|\frac{t^{2}}{N}\Tr{H_{z}(w_{p})F_{1}H_{z}(w_{p})F_{2}}\right|&=\left|\frac{t^{2}}{N}\Tr{(1+ipH_{z})^{-1}H^{1/2}_{z}X_{z}H_{z}^{3/2}(1+ipH_{z})^{-1}H^{3/2}_{z}X_{z}^{*}H^{1/2}_{z}}\right|\\
    &\leq\frac{Ct^{2}}{N}\Tr{H_{z}X_{z}H_{z}^{3}X_{z}^{*}}\\
    &=\frac{Ct^{2}}{N}\Tr{H_{z}\wt{H}_{z}^{2}(1-\eta^{2}\wt{H}_{z})}\\
    &\leq\frac{Ct^{2}}{N}\Tr{H_{z}\wt{H}_{z}^{2}}\\
    &\leq\frac{C}{N}\Tr{H_{z}\wt{H}_{z}}\\
    &\leq\frac{C}{Nt^{2}},
\end{align*}
when $|p|<\sqrt{\frac{t^{3}}{N}}\log N$, using the two-resolvent local law \Cref{prop:ll2}. This is what allows us to take $t=N^{-1+\epsilon}$ instead of $t=N^{-1/3+\epsilon}$.

Estimating the remaining terms in this manner we find 
\begin{align}
    K(z)\mbb{E}_{\mu_{z}}\left[\mbb{E}_{Y'}\left[f_{\eta}(z)\det|B'_{z}|^{2}\right]\right]&\leq CN^{2}\eta^{2}t.
\end{align}
Inserting this into \eqref{eq:EY} we obtain
\begin{align}
    1_{\mc{E}}\mbb{E}_{Y}\left[\sum_{n=1}^{N}f_{\eta}(z_{n})g_{z_{0}}(z_{n})\right]&\leq CN^{3}\eta^{2}\|g_{z_{0}}\|_{1}\leq CN^{2}\eta^{2},
\end{align}
and so
\begin{align*}
    \mbb{E}_{B}\left[\sum_{n=1}^{N}f_{\eta}(z_{n})g_{z_{0}}(z_{n})\right]&\leq C\mbb{E}_{X}\left[1_{\mc{E}}\mbb{E}_{Y}\left[\sum_{n=1}^{N}f_{\eta}(z_{n})g_{z_{0}}(z_{n})\right]\right]\\
    &\leq CN^{2}\eta^{2},
\end{align*}
as claimed.

\subsection{Real Entries and Complex Eigenvalues}
Applying \eqref{eq:real-complex} we have
\begin{align}
    \mbb{E}_{B}\left[\sum_{n=1}^{N_{\mbb{C}}}f_{\eta}(z_{n})g_{z_{0}}(z_{n})\right]&=\left(\frac{N}{2\pi t}\right)^{3}\int_{\mbb{C}_{+}}g_{z_{0}}(z)\nonumber\\
    &\times\int_{0}^{\infty}\frac{2y\delta}{\sqrt{\delta^{2}+4y^{2}}}L(z,\delta)\mbb{E}_{\xi_{\delta,z}}\left[\mbb{E}_{Y'}\left[f_{\eta}(z)\det|B'_{z}|^{2}\right]\right]d\delta dz,
\end{align}
where $B'=X^{(V)}+\sqrt{\frac{Nt}{N-2}}Y'$ and $Y'$ is a real Ginibre matrix of size $N-2$. 

The function $f_{\eta}(z)$ takes the form
\begin{align*}
    f_{\eta}(z)&=\eta^{2}\tr\begin{pmatrix}|Z-z|^{2}+\eta^{2}&(Z^{T}-\bar{z})W^{T}\\W(Z-z)&|B'_{z}|^{2}+WW^{T}+\eta^{2}\end{pmatrix}^{-1}-1\\
    &=\eta^{2}\tr\left(|Z-z|^{2}-(Z^{T}-\bar{z})W^{T}(|B'_{z}|^{2}+WW^{T}+\eta^{2})^{-1}W(Z-z)+\eta^{2}\right)^{-1}\\
    &+\eta^{2}\tr\left(|B'_{z}|^{2}+WW^{T}-W(Z-z)(|Z-z|^{2}+\eta^{2})^{-1}(Z^{T}-\bar{z})W^{T}+\eta^{2}\right)^{-1}\\
    &-1.
\end{align*}
To simplify this expression we note that
\begin{align*}
    1-(Z-z)(|Z-z|^{2}+\eta^{2})^{-1}(Z^{T}-\bar{z})&=\eta^{2}(|Z^{T}-\bar{z}|^{2}+\eta^{2})^{-1},\\
    1-W^{T}(|B'_{z}|^{2}+WW^{T}+\eta^{2})^{-1}W&=(1+W^{T}(|B'_{z}|^{2}+\eta^{2})^{-1}W)^{-1},
\end{align*}
which gives us
\begin{align*}
    f_{\eta}(z)&=\eta^{2}\tr\left((Z^{T}-\bar{z})(1+W^{T}(|B'_{z}|^{2}+\eta^{2})^{-1}W)^{-1}(Z-z)+\eta^{2}\right)^{-1}\\
    &+\eta^{2}\tr\left(|B'_{z}|^{2}+\eta^{2}W(|Z^{T}-\bar{z}|^{2}+\eta^{2})^{-1}W^{T}+\eta^{2}\right)^{-1}\\
    &-1\\
    &\leq\eta^{2}\tr\left(|Z-z|^{2}+\eta^{2}\right)^{-1}+\eta^{2}\tr(|B'_{z}|^{2}+\eta^{2})^{-1}-1.
\end{align*}
Now we note that there is a unitary $U\in\mbb{U}(2)$ such that
\begin{align*}
    |Z-z|^{2}+\eta^{2}&=U\begin{pmatrix}\eta^{2}&0\\0&\delta^{2}+4y^{2}+\eta^{2}\end{pmatrix}U^{*},
\end{align*}
and so
\begin{align}
    f_{\eta}(z)&\leq\frac{\eta^{2}}{\delta^{2}+4y^{2}+\eta^{2}}+\eta^{2}\tr|B'_{z}|^{-2},
\end{align}
where we have again used the crude bound $\tr(|B'_{z}|^{2}+\eta^{2})^{-1}\leq\tr|B'_{z}|^{-2}$.

Using anti-commuting variables one can show that the expectation of $\det|B'_{z}|^{2}$ is the same as in the complex case (this is only true for the second moment):
\begin{align}
    \mbb{E}_{Y'}\left[\det\left(|B'_{z}|^{2}+\eta^{2}\right)\right]&=\frac{N}{\pi t}\int_{\mbb{C}}e^{-\frac{N}{t}|q-\eta|^{2}}\det\left(|q|^{2}+|X^{(V)}_{z}|^{2}\right)dq.
\end{align}
Therefore we can repeat the analysis from the previous subsection to obtain
\begin{align}
    \mbb{E}_{Y'}\left[f_{\eta}(z)\det|B'_{z}|^{2}\right]&\leq C\sqrt{Nt}\left(\frac{\eta^{2}}{\delta^{2}+4y^{2}+\eta^{2}}+N^{2}\eta^{2}\right)e^{-N\phi^{(V)}_{z}}.
\end{align}

Let 
\begin{align}
    \wh{L}(\delta,z)&=e^{-N\phi_{z}}L(\delta,z).
\end{align}
In \cite[Lemma 6.9]{osman_bulk_2024} we obtained the asymptotics
\begin{align}
    \wh{L}(\delta,z)&\leq e^{-\frac{CN}{t}\delta^{2}},\quad\delta>C\|X\|,\\
    \wh{L}(\delta,z)&\leq e^{-C\log^{2}N},\quad\frac{\log N}{\sqrt{Nt}}<\delta<C\|X\|,
\end{align}
and
\begin{align}
    \wh{L}(\delta,z)&=\left[1+O\left(\frac{\log^{2}N}{\sqrt{Nt}}\right)\right]\frac{2^{5/2}\pi^{3/2}}{N^{3/2}\Tr{H^{2}_{z}}^{1/2}\Tr{H_{z}H_{\bar{z}}}}\nonumber\\
    &\times\left[\exp\left\{-\frac{N\wt{\sigma}_{z}}{2}\delta^{2}\right\}+O(N^{-D})\right],\quad \delta<\frac{\log N}{\sqrt{Nt}}.\label{eq:LAsymp}
\end{align}
for any $D>0$. As stated in the appendix of \cite{osman_bulk_2024}, this is uniform in $y=\Im z\geq0$. By Cramer's rule and the bound $|\eta^{(V)}_{z}-\eta_{z}|<CN^{-1}$ we have
\begin{align}
    e^{N(\phi_{z}-\phi^{(V)}_{z})}&\leq\frac{C}{t^{4}},
\end{align}
uniformly in $\delta$. Together with the asymptotics of $\wh{L}$ we conclude that we can restrict to $\delta<\frac{\log N}{\sqrt{N}}$. From \cite[Lemma 6.11]{osman_bulk_2024} we have
\begin{align*}
    \frac{\det\left[(\eta^{(V)}_{z})^{2}+|X^{(V)}_{z}|^{2}\right]}{\det\left(\eta_{z}^{2}+|X_{z}|^{2}\right)}&=\left|\det1_{2}\otimes V^{T}G^{(V)}_{z}1_{2}\otimes V\right|\\
    &=\left[1+O\left(\frac{\log N}{\sqrt{Nt^{3}}}\right)\right]t^{4}\Tr{H^{2}_{z}}\Tr{H_{z}H_{\bar{z}}}\sigma_{z}\wt{\sigma}_{z},
\end{align*}
with probability $1-O(e^{-C\log^{2}N})$, uniformly in $\delta<\frac{\log N}{\sqrt{N}}$. Changing variables $\delta\mapsto\delta/\sqrt{N\wt{\sigma}_{z}}$ we find (where $z=x+iy$)
\begin{align*}
    1_{\mc{E}}\mbb{E}_{Y}\left[\sum_{n=1}^{N_{\mbb{C}}}f_{\eta}(z_{n})g_{z_{0}}(z_{n})\right]&\leq CN\int_{\mbb{C}_{+}}g_{z_{0}}(z)\int_{0}^{\infty}\left(N^{2}\eta^{2}+\frac{N\eta^{2}}{\delta^{2}+4Ny^{2}+N\eta^{2}}\right)\frac{\sqrt{N}y\delta}{\sqrt{\delta^{2}+4Ny^{2}}}e^{-\frac{1}{2}\delta^{2}}d\delta dz\\
    &=CN\int_{\mbb{C}_{+}}g_{z_{0}}(z)\sqrt{N}ye^{2Ny^{2}}\int_{2\sqrt{N}y}^{\infty}\left(N^{2}\eta^{2}+\frac{N\eta^{2}}{\delta^{2}+N\eta^{2}}\right)e^{-\frac{1}{2}\delta^{2}}d\delta dz\\
    &\leq CN^{2}\eta^{2}.
\end{align*}

\subsection{Real Entries and Real Eigenvalues}
Applying \eqref{eq:real-real} we have
\begin{align}
    \mbb{E}_{Y}\left[\sum_{n=1}^{N_{\mbb{R}}}f_{\eta}(u_{n})g_{u_{0}}(u_{n})\right]&=\frac{N}{4\pi t}\int_{\mbb{R}}g_{u_{0}}(u)K(u)\mbb{E}_{\nu_{u}}\left[\mbb{E}_{Y'}\left[f_{\eta}(u)\det|B'_{u}|\right]\right]du,
\end{align}
where $B'=X^{(\mbf{v})}+\sqrt{\frac{Nt}{N-1}}Y'$ and $Y'$ is a real Ginibre matrix of size $N-1$.

In these coordinates $f_{\eta}$ takes the form
\begin{align}
    f_{\eta}(u)&=\eta^{2}\tr\left(|B'_{u}|^{2}+\mbf{w}\mbf{w}^{T}+\eta^{2}\right)^{-1}\nonumber\\
    &\leq\eta^{2}\tr(|B'_{u}|^{2}+\eta^{2})^{-1}.
\end{align}
This time we cannot afford to take the crude bound $\tr(|B'_{u}|^{2}+\eta^{2})^{-1}\leq\tr|B'_{u}|^{-2}$ due to a logarithmic singularity. Instead we use a different crude bound $\det|B|\leq\det^{1/2}\left(|B|^{2}+\eta^{2}\right)$:
\begin{align}
    \mbb{E}_{Y'}\left[f_{\eta}(u)\det|B'_{u}|\right]&\leq\eta^{2}\mbb{E}_{Y'}\left[\tr(|B'_{u}|^{2}+\eta^{2})^{-1}\det^{1/2}\left(|B'_{u}|^{2}+\eta^{2}\right)\right]\nonumber\\
    &=-2\eta^{2}\mbb{E}_{Y'}\left[\det\left(|B'_{u}|^{2}+\eta^{2}\right)\frac{\partial}{\partial\eta^{2}}\det^{-1/2}\left(|B'_{u}|^{2}+\eta^{2}\right)\right]\nonumber\\
    &=:F(\eta).
\end{align}

For convencience we drop the superscript $(\mbf{v})$ for now and restore it at the end; the analysis relies only on properties of singular values of $X_{u}$ which also hold for $X^{(\mbf{v})}_{u}$ by interlacing. Following the same steps as in \Cref{sec4} we can obtain the formula
\begin{align}
    F(\eta)&=\frac{N^{2}\eta^{2}}{2\pi t^{2}}\int_{0}^{\infty}\frac{r}{(1+r)^{3}}e^{-\frac{N}{2t}\eta^{2}r}\psi(r)dr,
\end{align}
where
\begin{align}
    \psi(r)&=\left(\frac{Nr}{2\pi t(1+r)}\right)^{N/2-1}\int_{S^{N-1}}\exp\left\{-\frac{Nr}{2t(1+r)}\mbf{u}^{T}|X_{u}|^{2}\mbf{u}\right\}I(r,\mbf{u})d_{H}\mbf{u}
\end{align}
and
\begin{align}
    I(r,\mbf{u})&=\frac{N}{\pi t}\int_{\mbb{C}}e^{-\frac{N}{t}|q-\eta|^{2}}\det\left(|q|^{2}+|X_{u}|^{2}\right)\cdot\left[1+rq(\bar{q}+\eta(1+r))\mbf{u}^{T}H_{u}(|q|)\mbf{u}\right]dq.
\end{align}
Changing variable $q=\sigma e^{i\theta}$ we obtain
\begin{align}
    I(r,\mbf{u})&=\frac{2N}{t}e^{-\frac{N}{t}\eta^{2}}\int_{0}^{\infty}e^{-N\phi_{u}(\sigma)}\left[\left(1+r\sigma^{2}\mbf{u}^{T}H_{u}(\sigma)\mbf{u}\right)I_{0}\left(\frac{2N\eta\sigma}{t}\right)\right.\nonumber\\&\left.+\eta r(1+r)\sigma\mbf{u}^{T}H_{u}(\sigma)\mbf{u}I_{1}\left(\frac{2N\eta\sigma}{t}\right)\right]\sigma d\sigma,
\end{align}
where $I_{m}(x)=\frac{1}{\pi}\int_{0}^{\pi}e^{x\cos\theta}\cos m\theta d\theta,\,m\in\mbb{N}$ is the modified Bessel function of the first kind with asymptotics
\begin{align}
    I_{m}(x)&\sim\begin{cases}\left(\frac{x}{2}\right)^{m}&\quad x\to0\\
    \frac{e^{x}}{\sqrt{2\pi x}}&\quad x\to\infty
    \end{cases}.
\end{align}
Since $\eta=N^{-1-\epsilon}$, the growth of $I_{m}(N\eta\sigma/t)$ is dominated by the decay of $\phi_{u}(\sigma)$. Arguing as in \Cref{sec4} we obtain the bound
\begin{align*}
    I(r,\mbf{u})&\leq C\sqrt{Nt}(1+r)\left(1+\frac{N\eta^{2} r}{t}\right)e^{-N\phi_{u}}
\end{align*}
where we have also used $\mbf{u}^{T}H_{u}\mbf{u}\leq Ct^{-2}$.

Inserting this bound into $\psi(r)$ and integrating over $\mbf{u}\in S^{N-1}$ using the duality formula we obtain
\begin{align*}
    \frac{\psi(r)}{1+r}&\leq C\sqrt{Nt}e^{-\frac{N}{2}\phi_{u}}\cdot\left(1+\frac{N\eta^{2} r}{t}\right)e^{-\frac{N}{2}\wt{\phi}_{u}(r)}h(r),
\end{align*}
where $\wt{\phi}_{u}(r)$ and $\eta_{u}(r)$ were defined in \eqref{eq:phiTilde} and \eqref{eq:eta_r} respectively and
\begin{align}
    h(r)&=\int_{-\infty}^{\infty}e^{\frac{iNrp}{2t(1+r)}}\det^{-1/2}\left[1+ipH_{u}(\eta_{u}(r))\right]dp.
\end{align}
Inserting this into the expression for $F(\eta)$ we obtain the bound
\begin{align}
    F(\eta)&\leq\frac{CN^{5/2}\eta^{2}}{t^{3/2}}e^{-\frac{N}{2}\phi_{u}}\int_{0}^{\infty}\frac{r}{(1+r)^{2}}\left(1+\frac{N\eta^{2} r}{t}\right)e^{-\frac{N}{2}\left[\frac{\eta^{2}r}{t}+\wt{\phi}_{u}(r)\right]}h(r)dr.
\end{align}
We can treat this in the same way we treated $D_{2m-1}(u)$ to obtain
\begin{align}
    F(\eta)&\leq CN^{2}\eta^{2}e^{-\frac{N}{2}\phi_{u}}\int_{1/\delta}^{\infty}\frac{1}{r}\left(1+\frac{N\eta^{2}r}{t}\right)e^{-\frac{N}{2}\left(\eta^{2}r/t+Ct/r\right)}dr\nonumber\\
    &\leq CN^{2}\eta^{2}|\log N\eta|\cdot e^{-\frac{N}{2}\phi_{u}}.
\end{align}
Restoring the superscript $(\mbf{v})$ we have thus found
\begin{align}
    \mbb{E}_{Y'}\left[f_{\eta}(u)\det|B'_{u}|\right]&\leq CN^{2}\eta^{2}|\log N\eta|\cdot e^{-\frac{N}{2}\phi^{(\mbf{v})}_{u}}.
\end{align}
The rest of the proof is the same as in the complex case. From \cite[Lemma 6.2, Lemma 6.4]{osman_bulk_2024} we have
\begin{align}
    K(u)&=\left[1+O\left(\frac{\log^{3}N}{\sqrt{Nt}}\right)\right]\sqrt{\frac{4\pi}{N\Tr{H^{2}_{u}}}}e^{\frac{N}{2}\phi_{u}}\leq\frac{Ct^{3/2}}{N^{1/2}}e^{\frac{N}{2}\phi_{u}},\\
    e^{\frac{N}{2}\left[\phi_{u}-\phi^{(\mbf{v})}_{u}\right]}&=\left[1+O\left(\frac{\log^{2}N}{\sqrt{Nt}}\right)\right]\det^{1/2}\left[(1_{2}\otimes\mbf{v}^{T})G_{u}(1_{2}\otimes\mbf{v})\right],
\end{align}
and
\begin{align}
    \mbb{E}_{\nu_{u}}\left[\det^{1/2}\left[(1_{2}\otimes\mbf{v}^{T})G_{u}(1_{2}\otimes\mbf{v})\right]\right]&=\left[1+O\left(\frac{\log^{2}N}{\sqrt{Nt^{3}}}\right)\right]\sqrt{t^{2}\Tr{H^{2}_{u}}\sigma_{u}}\leq Ct^{-1/2}.
\end{align}
Putting everything together we obtain
\begin{align}
    1_{\mc{E}}\mbb{E}_{Y}\left[\sum_{n=1}^{N_{\mbb{R}}}f_{\eta}(u_{n})g_{u_{0}}(u_{n})\right]&\leq CN^{2}\eta^{2}|\log N\eta|\cdot N^{1/2}\|g_{u_{0}}\|_{1}\nonumber\\
    &\leq CN^{2}\eta^{2}|\log N\eta|.
\end{align}

\section{Proof of \Cref{thm2}}\label{sec7}
We write the details in the case $l=1$, i.e. we fix a $\mbf{q}\in\mbf{C}^{N}$ and consider
\begin{align}
    \mc{L}_{\theta}(z_{0},\mbf{q})&=\sum_{n=1}^{N}\theta\left(\sqrt{N}(z_{n}-z_{0}),N|\mbf{q}^{*}\mbf{r}_{n}|^{2}\right),
\end{align}
where $\theta:\mbb{C}\times\mbb{R}_{+}$ is such that $\theta(z,x)=0$ for $|z|>C$ and
\begin{align}
    \left|\frac{\partial^{p}}{\partial x^{p}}\theta(z,x)\right|&\leq(1+x)^{C},\quad p=0,...,5.
\end{align}
The extension to $l>1$ does not pose any additional difficulties.

We fix small, positive constants $\epsilon,\xi,\zeta$ such that $\epsilon>\zeta>\xi$ and set $\eta=N^{-1-\epsilon}$. At the end we will choose $\epsilon,\xi,\zeta$ small enough (depending on $\theta$) so that all the error terms are $O(N^{-\delta})$ for some $\delta>0$. Let $\mbf{v}_{1}(z),...,\mbf{v}_{N}(z)$ denote the singular vectors of $X_{z}$. We define the events
\begin{align}
    \mc{E}_{1}&=\left\{\left|\{n:|z_{n}-z_{0}|<rN^{-1/2}\}\right|<N^{\xi}\right\},\\
    \mc{E}_{2}&=\left\{\sup_{|z-z_{0}|<rN^{-1/2}}|\mbf{q}^{*}\mbf{v}_{N}(z)|<N^{-1/2+\xi/2}\|\mbf{q}\|\right\},\\
    \mc{E}_{3}&=\left\{\sup_{|z-z_{0}|<rN^{-1/2}}\sup_{|E|<N^{\zeta}\eta}|\mbf{q}^{*}G_{z}(E+i\eta)\mbf{q}|<\frac{N^{\xi+\zeta}}{N\eta}\right\},\\
    \mc{E}_{4}&=\left\{\inf_{|z_{n}-z_{0}|<rN^{-1/2}}s_{N-1}(z_{n})>N^{\epsilon/2}\eta\right\}.
\end{align}
By local laws and \Cref{lem:aPriori} (and a net argument), $\mc{E}_{1},\mc{E}_{2}$ and $\mc{E}_{3}$ hold with probability $1-N^{-D}$ for any $D>0$. Since $|\mc{L}(z_{0},\mbf{q})|<CN(1+N\|\mbf{q}\|^{2})^{C}$ by the assumption on $\theta$, we have
\begin{align}
    \mbb{E}\left[\mc{L}(z_{0},\mbf{q})\right]&=\mbb{E}\left[\mc{L}(z_{0},\mbf{q})1_{\mc{E}_{1}}1_{\mc{E}_{2}}1_{\mc{E}_{3}}\right]+O(N^{-D}).
\end{align}
By the assumption on the support of $\theta$, on the event $\mc{E}_{1}$ only $N^{\xi}$ terms in the sum are non-zero and so by \Cref{thm1} we have
\begin{align}
    \mbb{E}\left[\mc{L}(z_{0},\mbf{q})\right]&=\mbb{E}\left[\mc{L}(z_{0},\mbf{q})1_{\mc{E}_{1}}1_{\mc{E}_{2}}1_{\mc{E}_{3}}1_{\mc{E}_{4}}\right]+O(N^{C\xi-\epsilon/4}).\label{eq:mcL}
\end{align}
Now we define
\begin{align}
    \wh{\mc{L}}(z_{0},\mbf{q})&=\sum_{n=1}^{N}\theta\left(\sqrt{N}(z_{n}-z_{0}),\frac{N}{\pi}\int_{-N^{\zeta}\eta}^{N^{\zeta}\eta}\Im \wt{\mbf{q}}^{*}G_{z_{n}}(E+i\eta)\wt{\mbf{q}} dE\right),
\end{align}
where $\wt{\mbf{q}}=(0,\mbf{q})^{T}\in\mbb{C}^{2N}$. By exactly the same arguments we have
\begin{align}
    \mbb{E}\left[\wh{\mc{L}}(z_{0},\mbf{q})\right]&=\mbb{E}\left[\wh{\mc{L}}(z_{0},\mbf{q})1_{\mc{E}_{1}}1_{\mc{E}_{2}}1_{\mc{E}_{3}}1_{\mc{E}_{4}}\right]+O\left(N^{C(\xi+2\zeta)-\epsilon/4}\right).\label{eq:mcLHat}
\end{align}
We can now compare $\mc{L}$ and $\wh{\mc{L}}$. Using the spectral decomposition of $G_{z_{n}}$ we have
\begin{align}
    \frac{N}{\pi}\int_{-N^{\zeta}\eta}^{N^{\zeta}\eta}\Im\wt{\mbf{q}}^{*}G_{z_{n}}(E+i\eta)\wt{\mbf{q}}dE&=\frac{N|\mbf{q}^{*}\mbf{r}_{n}|^{2}}{\pi}\int_{-N^{\zeta}\eta}^{N^{\zeta}\eta}\frac{\eta}{E^{2}+\eta^{2}}dE\label{eq:I1}\\
    &+\sum_{m=1}^{N-1}\frac{N|\mbf{q}^{*}\mbf{v}_{m}|^{2}}{\pi}\int_{-N^{\zeta}\eta}^{N^{\zeta}\eta}\frac{\eta}{(s_{m}(z_{n})-E)^{2}+\eta^{2}}dE\label{eq:I2}\\
    &+\sum_{m=1}^{N-1}\frac{N|\mbf{q}^{*}\mbf{v}_{m}|^{2}}{\pi}\int_{-N^{\zeta}\eta}^{N^{\zeta}\eta}\frac{\eta}{(s_{m}(z_{n})+E)^{2}+\eta^{2}}dE.\label{eq:I3}
\end{align}
A direct calculation shows that the integral in \eqref{eq:I1} is $1+O(N^{-\zeta})$. On the event $\mc{E}_{3}\cap\mc{E}_{4}$, we have
\begin{align*}
    \eqref{eq:I2}&\leq CN^{1+\zeta}\sum_{m=1}^{N-1}\frac{\eta^{2}|\mbf{q}^{*}\mbf{v}_{m}|^{2}}{s_{m}^{2}(z_{n})+N^{\epsilon}\eta^{2}}\\
    &\leq CN^{1+\zeta-\epsilon}\sum_{m=1}^{N-1}\frac{(N^{\epsilon/2}\eta)^{2}|\mbf{q}^{*}\mbf{v}_{m}|^{2}}{s^{2}_{m}(z_{n})+(N^{\epsilon/2}\eta)^{2}}\\
    &\leq CN^{1+\zeta-\epsilon}\cdot(N^{\epsilon/2}\eta)\Im\wt{\mbf{q}}^{*}G_{z_{n}}(iN^{\epsilon/2}\eta)\wt{\mbf{q}}\\
    &\leq CN^{\xi+\zeta-\epsilon},
\end{align*}
and likewise for \eqref{eq:I3}. Thus we find
\begin{align}
    1_{\mc{E}_{2}}1_{\mc{E}_{3}}1_{\mc{E}_{4}}\frac{N}{\pi}\int_{-N^{\zeta}\eta}^{N^{\zeta}\eta}\Im\wt{\mbf{q}}^{*}G_{z_{n}}(E+i\eta)\wt{\mbf{q}}dE&=N|\mbf{q}^{*}\mbf{r}_{n}|^{2}+O(N^{\xi-\zeta})+O(N^{\xi+\zeta-\epsilon}).
\end{align}
By Taylor expansion using the assumption on $\theta$ we obtain
\begin{align}
    \mbb{E}\left[\left(\mc{L}(z_{0},\mbf{q})-\wh{\mc{L}}(z_{0},\mbf{q})\right)1_{\mc{E}_{1}}1_{\mc{E}_{2}}1_{\mc{E}_{3}}1_{\mc{E}_{4}}\right]&=O(N^{-\delta}),
\end{align}
for some $\delta>0$ depending on $\theta$, as long as $\xi$ is sufficiently smaller than $\zeta$ which in turn is sufficiently smaller than $\epsilon$. By \eqref{eq:mcL}, \eqref{eq:mcLHat} and the triangle inequality we have
\begin{align}
    \left|\mbb{E}\left[\mc{L}(z_{0},\mbf{q})\right]-\mbb{E}\left[\wh{\mc{L}}(z_{0},\mbf{q})\right]\right|&\leq N^{-\delta}.
\end{align}

Now we note that
\begin{align*}
    f:z&\mapsto\theta\left(\sqrt{N}(z-z_{0}),\frac{N}{\pi}\int_{-N^{\zeta}\eta}^{N^{\zeta}\eta}\Im\wt{\mbf{q}}^{*}G_{z}(E+i\eta)\wt{\mbf{q}}dE\right)
\end{align*}
is $C^{2}$ and so we can apply Girko's formula to $\wh{\mc{L}}$:
\begin{align}
    \wh{\mc{L}}(z_{0},\mbf{q})&=-\frac{1}{4\pi}\int_{\mbb{C}}\Delta_{z}f(z)\int_{0}^{\infty}\Im\tr G_{z}(i\sigma)d\sigma dz.
\end{align}
The rest of the proof follows the same pattern as the proof of \Cref{lem:comp}; the necessary bounds on $f$ follow from \Cref{lem:aPriori}.

In the real case, we follow Tao--Vu \cite{tao_random_2015}. Using the level repulsion bound \cite[Lemma 39]{tao_random_2015}
\begin{align}
    P\left(\left|\{n:|z_{n}-u_{0}|<N^{-1/2-\tau}\}\right|>1\right)&\leq CN^{-\tau}
\end{align}
we can replace $\theta$ with $\wt{\theta}$ which is supported in $\{x+iy:|x-u_{0}|<rN^{-1/2},|y|<N^{-1/2-\tau}\}\times\mbb{R}^{l}$:
\begin{align*}
    &\mbb{E}\left[\sum_{n=1}^{N_{\mbb{R}}}\theta(\sqrt{N}(u_{n}-u_{0}),N|\mbf{q}_{1}^{T}\mbf{r}^{\mbb{R}}_{n}|^{2},...,N|\mbf{q}_{l}^{T}\mbf{r}^{\mbb{R}}_{n}|^{2})\right]=\\
    &\mbb{E}\left[\sum_{n=1}^{N}\wt{\theta}(\sqrt{N}(z_{n}-u_{0}),N|\mbf{q}_{1}^{T}\mbf{r}_{n}|^{2},...,N|\mbf{q}_{l}^{T}\mbf{r}_{n}|^{2})\right]+O(N^{-\delta}).
\end{align*}
The new statistic is a sum over all eigenvalues and so we can apply Girko's formula. The extra powers of $N^{\tau}$ from the derivatives of $\wt{\theta}$ can be absorbed in the $O(N^{-\delta})$ term for sufficiently small $\tau$. We can follow a similar procedure for complex eigenvalues, where we replace functions on the upper half-plane with functions on the whole plane.

\section{Proof of \Cref{thm3}}\label{sec8}
Consider first real eigenvalues of real matrices. In view of \Cref{thm2}, we only need to evaluate 
\begin{align}
    \mbb{E}_{B}\left[\sum_{n=1}^{N}g_{z_{0}}(z_{n})\theta\left(N|\mbf{q}_{1}^{T}\mbf{r}_{n}|^{2},...,N\mbf{q}_{l}^{T}\mbf{r}_{n}|^{2}\right)\right]
\end{align}
where $B=X+\sqrt{t}Y$ is Gauss-divisible and $\theta(\mbf{x})$ is a monomial
\begin{align}
    \theta(\mbf{x})&=\prod_{j=1}^{l}x_{j}^{m_{j}}.
\end{align}
Since $\|\theta\|_{\infty}\leq N^{C}$ for some fixed $C$, we can restrict to the event $\mc{E}$ that $X$ satisfies the local laws in \Cref{prop:ll1,prop:ll2}.

The partial Schur decomposition gives us the formula
\begin{align}
    \mbb{E}_{Y}\left[\sum_{n=1}^{N}g_{u_{0}}(u_{n})\theta(\mbf{r}_{n})\right]&=\frac{N}{4\pi t}\int_{-\infty}^{\infty}g_{u_{0}}(u)K_{\mbb{R}}(u)\mbb{E}_{\nu_{u}}\left[\theta(\mbf{v})\mbb{E}_{Y'}\left[\det|B'_{u}|\right]\right]du.
\end{align}
The only difference between this and the integrals evaluated in \Cref{sec4,sec6} is the presence of $\theta(\mbf{v})$. Since on the event $\mc{E}$ we have the bound $\|\theta\|_{\infty}<N^{C\xi}$, we can repeat the same asymptotic analysis to obtain
\begin{align}
    \mbb{E}_{Y}\left[\sum_{n=1}^{N}g_{u_{0}}(u_{n})\theta(\mbf{r}_{n})\right]&=\left[1+O\left(\frac{\log^{2}N}{\sqrt{Nt}}\right)\right]\sqrt{\frac{N}{2\pi}}\int_{\mbb{R}}\sqrt{\sigma_{u}}g_{u_{0}}(u)\mbb{E}_{\nu_{u}}\left[\theta(\mbf{v})\right]du.
\end{align}
From \cite[Lemma 8.1]{maltsev_bulk_2023} we have $\sigma_{u}=1+O(t)$. Using the duality formula we can rewrite the expectation value as
\begin{align}
    \mbb{E}_{\nu_{u}}\left[\theta(\mbf{v})\right]&=\frac{e^{\frac{N}{2}\phi_{u}}}{K_{\mbb{R}}(u)}\int_{-\infty}^{\infty}e^{\frac{iNp}{2t}}\det^{-1/2}\left(1+ipH_{u}\right)h(p)dp,
\end{align}
where
\begin{align}
    h(p)&=\frac{1}{(2\pi)^{N/2}}\int_{\mbb{R}^{N}}e^{-\frac{1}{2}\|\mbf{x}\|^{2}}\prod_{j=1}^{l}\left(t\mbf{x}^{T}\sqrt{H_{u}(w_{p})}\mbf{q}_{j}\mbf{q}_{j}^{T}\sqrt{H_{u}(w_{p})}\mbf{x}\right)^{m_{j}}d\mbf{x},
\end{align}
and $w_{p}=\sqrt{\eta_{u}^{2}+ip}$. By Wick's theorem $h(p)$ is a sum of products of traces. Each term can be generated by choosing a permutation $\sigma$ of $m=\sum_{j=1}^{l}m_{j}$ elements, taking the trace of the product of $t\sqrt{H_{u}(w_{p})}\mbf{q}_{j}\mbf{q}_{j}^{T}\sqrt{H_{u}(w_{p})}$ in each cycle of $\sigma$, and then taking the product over the cycles. For example, for the permutation
\begin{align*}
    \sigma=(1,\cdots,m_{1})\cdots(m-m_{l}+1,\cdots,m)
\end{align*}
we have the term
\begin{align*}
    &t^{m}\tr\left(\sqrt{H_{u}(w_{p})}\mbf{q}_{1}\mbf{q}^{T}_{1}\sqrt{H_{u}(w_{p})}\right)^{m_{1}}\cdots\tr\left(\sqrt{H_{u}(w_{p})}\mbf{q}_{l}\mbf{q}_{l}^{T}\sqrt{H_{u}(w_{p})}\right)^{m_{l}}\\
    &=t^{m}\left(\mbf{q}_{1}^{T}H_{u}(w_{p})\mbf{q}_{1}\right)^{m_{1}}\cdots\left(\mbf{q}_{l}^{T}H_{u}(w_{p})\mbf{q}_{l}\right)^{m_{l}}.
\end{align*}
Each term can be estimated by the isotropic local law:
\begin{align*}
    t\left|\mbf{q}^{T}H_{u}(w_{p})\mbf{q}-\Tr{H_{u}(w_{p})}\|\mbf{q}\|^{2}\right|&\prec\frac{\|\mbf{q}\|^{2}}{\sqrt{Nt}}.
\end{align*}
The bound $h(p)\leq CN^{C\xi}$ allows us to restrict $p$ to the interval $|p|<\sqrt{\frac{t^{3}}{N}}\log N$, in which
\begin{align*}
    t\Tr{H_{u}(w_{p})}&=1+O\left(\frac{\log N}{\sqrt{Nt}}\right).
\end{align*}
and so
\begin{align*}
    h(p)&=\left[1+O\left(\frac{\log N}{\sqrt{Nt}}\right)\right]\frac{1}{(2\pi)^{N/2}}\int_{\mbb{R}^{N}}e^{-\frac{1}{2}\|\mbf{x}\|^{2}}\prod_{j=1}^{l}|\mbf{q}_{j}^{T}\mbf{x}|^{2m_{j}}d\mbf{x}\\
    &=\left[1+O\left(\frac{\log N}{\sqrt{Nt}}\right)\right]\int_{\mbb{R}^{l}_{+}}\theta(\mbf{x})\rho_{\mbf{q}_{1},...,\mbf{q}_{l}}(\mbf{x})d\mbf{x},
\end{align*}
where $\rho_{\mbf{q}_{1},...,\mbf{q}_{l}}$ is the density of $(|\mbf{q}^{T}_{1}\mbf{p}|^{2},...,|\mbf{q}^{T}_{l}\mbf{p}|^{2})$ for a standard Gaussian vector $\mbf{p}\in\mbb{R}^{N}$. Complex matrices can be treated in exactly the same way.

Now consider complex eigenvalues of real matrices. Applying \eqref{eq:real-complex} we find
\begin{align}
    \mbb{E}_{Y}\left[\sum_{n=1}^{N_{\mbb{C}}}g_{z_{0}}(z)\theta\left(N|\mbf{q}^{*}_{1}\mbf{r}_{n}|^{2},...,N|\mbf{q}^{*}_{l}\mbf{r}_{n}|^{2}\right)\right]&=\left(\frac{N}{2\pi t}\right)^{3}\int_{\mbb{C}_{+}}g_{z_{0}}(z)\int_{0}^{\infty}\frac{2y\delta}{\sqrt{\delta^{2}+4y^{2}}}\nonumber\\
    &\times L(\delta,y)\mbb{E}_{\xi_{\delta,z}}\left[\wt{\theta}(V)\mbb{E}_{Y'}\left[\det|B'_{z}|^{2}\right]\right]d\delta dz,
\end{align}
where
\begin{align}
    \wt{\theta}(V)&=\prod_{j=1}^{l}\textup{vec}(V)^{T}Q_{j}\textup{vec}(V),\\
    Q_{j}&=\begin{pmatrix}\alpha^{2}(\mbf{a}_{j}\mbf{a}_{j}^{T}+\mbf{b}_{j}\mbf{b}_{j}^{T})&\alpha\beta(\mbf{a}_{j}\mbf{b}_{j}^{T}-\mbf{b}_{j}\mbf{a}_{j}^{T})\\
    \alpha\beta(\mbf{b}_{j}\mbf{a}_{j}^{T}-\mbf{a}_{j}\mbf{b}_{j}^{T})&\beta^{2}(\mbf{a}_{j}\mbf{a}_{j}^{T}+\mbf{b}_{j}\mbf{b}_{j}^{T})\end{pmatrix},
\end{align}
and we have defined $\mbf{q}_{j}=\mbf{a}_{j}+i\mbf{b}_{j}$ for $\mbf{a}_{j},\mbf{b}_{j}\in\mbb{R}^{N}$ and 
\begin{align}
    \alpha&=\sqrt{\frac{b}{b+c}},\quad\beta=\sqrt{\frac{c}{b+c}}.
\end{align}
Recall that $b$ and $c$ were defined in \eqref{eq:Z}.

As before, we can perform the same asymptotic analysis as in \Cref{sec6}, bounding the extra terms from $\theta$ by $N^{C}$ uniformly in all integration variables, and thus obtain
\begin{align}
    \mbb{E}_{Y}\left[\sum_{n=1}^{N_{\mbb{C}}}g_{z_{0}}(z)\theta(N|\mbf{q}_{1}^{*}\mbf{r}_{1}|^{2},...,N|\mbf{q}_{l}^{*}\mbf{r}_{n}|^{2})\right]&=\left[1+O\left(\frac{\log^{2}N}{\sqrt{Nt^{3}}}\right)\right]\frac{N}{\pi}\int_{\mbb{C}_{+}}g_{z_{0}}(z)\nonumber\\
    &\times\int_{0}^{\log N}\frac{2\sqrt{N\wt{\sigma}_{z}}y\delta}{\sqrt{\delta^{2}+4N\wt{\sigma}_{z}y^{2}}}e^{-\frac{1}{2}\delta^{2}}\mbb{E}_{\xi_{\delta,z}}\left[\wt{\theta}(V)\right]d\delta dz.
\end{align}
The expectation value can be evaluated using the duality formula:
\begin{align}
    \mbb{E}_{\xi_{\delta,z}}\left[\wt{\theta}(V)\right]&=\frac{1}{L(\delta,z)}\frac{e^{\frac{N}{t}\eta_{z}^{2}}}{\det^{1/2}\mc{M}_{0}}\int_{\mbb{M}^{sym}_{2}(\mbb{R})}e^{\frac{iN}{2t}\tr P}\det^{-1/2}\left(1+i\sqrt{\mc{M}_{0}}(P\otimes 1_{N})\sqrt{\mc{M}_{0}}\right)h(P)dP,
\end{align}
where
\begin{align}
    h(P)&=\frac{1}{(2\pi)^{N}}\int_{\mbb{R}^{2N}}e^{-\frac{1}{2}\|\mbf{x}\|^{2}}\prod_{j=1}^{l}\left(t\mbf{x}^{T}\sqrt{\mc{M}_{P}}Q_{j}\sqrt{\mc{M}_{P}}\mbf{x}\right)^{m_{j}}d\mbf{x},\quad P\in\mbb{M}^{sym}_{2}(\mbb{R}),
\end{align}
and
\begin{align}
    \mc{M}_{P}&=\left(\eta_{z}^{2}+iP\otimes1_{N}+|1_{2}\otimes X-Z^{T}\otimes1_{N}|^{2}\right)^{-1}.
\end{align}
We recall that a matrix with positive real part has a unique square root (after integration over $\mbf{x}$ only factors of $\mc{M}_{P}$ appear so the existence of the square root is unimportant). Using the bound $h(P)<N^{C}$ we can restrict $P$ to the region $\|P\|<\frac{t\log N}{\sqrt{N}}$ following the proof of \cite[Lemma 6.9]{osman_bulk_2024}. Wick's theorem  tells us that $h(P)$ consists of traces of products of $\sqrt{\mc{M}_{P}}Q_{j}\sqrt{\mc{M}_{P}}$ which themselves consist of inner products
\begin{align*}
    \mbf{c}^{T}(\mc{M}_{P})_{\mu\nu}\mbf{d},\quad \mu,\nu=1,2.
\end{align*}
We use the bound $\|P\|<\frac{t\log N}{\sqrt{N}}$ to reduce to the case $P=0$ using the fact that
\begin{align*}
    \tr(A+iB)^{-1}D&=\tr\frac{1-iA^{-1/2}BA^{-1/2}}{1+(A^{-1/2}BA^{-1/2})^{2}}A^{-1/2}DA^{-1/2}\\
    &=\left[1+O\left(\|A^{-1}\|\cdot\|B\|\right)\right]\tr A^{-1}D
\end{align*}
for positive $A,D$ and Hermitian $B$ when $\|A^{-1}\|\cdot\|B\|<1$. Let $\omega=(c/b)^{1/4}$ and
\begin{align}
    \mc{X}&=\begin{pmatrix}-i\eta&1_{2}\otimes X-Z^{T}\otimes1_{N}\\1_{2}\otimes X^{T}-Z\otimes1_{N}&-i\eta\end{pmatrix},\\
    S&=\frac{1}{\sqrt{2}}\begin{pmatrix}\omega&\omega\\-i/\omega&i/\omega\end{pmatrix},\\
    T&=\begin{pmatrix}1_{N}&0&0&0\\0&0&1_{N}&0\\0&1_{N}&0&0\\0&0&0&1_{N}\end{pmatrix}.
\end{align}
Then, with $E_{\mu\nu}$ denoting the matrix with 1 in the $(\mu,\nu)$ entry and zero elsewhere, we have (see \cite[Appendix B]{osman_bulk_2024})
\begin{align*}
    \mbf{c}^{T}(\mc{M}_{0})_{\mu\nu}\mbf{d}&=\frac{1}{i\eta}\tr\begin{pmatrix}1+\frac{i\delta v}{\sqrt{N\wt{\sigma}_{z}}}G_{z}E_{2,n}&-\frac{i\delta u}{\sqrt{N\wt{\sigma}_{z}}} G_{z}E_{2,n}\\\frac{i\delta u}{\sqrt{N\wt{\sigma}_{z}}}G_{\bar{z}}E_{2,n}&1-\frac{i\delta v}{\sqrt{N\wt{\sigma}_{z}}}G_{\bar{z}}E_{2,n}\end{pmatrix}^{-1}\begin{pmatrix}G_{z}&0\\0&G_{\bar{z}}\end{pmatrix}T\begin{pmatrix}0&0\\0&S^{-1}E_{\mu\nu}S\otimes\mbf{d}\mbf{c}^{T}\end{pmatrix}T,
\end{align*}
where $v=\frac{\delta}{2\sqrt{N\wt{\sigma}_{z}}y}$ and $u=\frac{\sqrt{\delta^{2}+4N\wt{\sigma}_{z}y^{2}}}{2\sqrt{N\wt{\sigma}_{z}}y}$. In the region $\delta<\log N$, we can make a series expansion of the first matrix in the above equation and estimate higher order terms by Cauchy-Schwarz and local laws, as in the proof of \cite[Lemma 6.9]{osman_bulk_2024}. The leading order terms will be
\begin{align}
    t\mbf{c}^{T}H_{z}\mbf{d},&\quad\mu=\nu=1,\\
    t\mbf{c}^{T}H_{\bar{z}}\mbf{d},&\quad\mu=\nu=2,\\
    \frac{it\omega^{2}}{2}\mbf{c}^{T}(H_{z}-H_{\bar{z}})\mbf{d},&\quad\mu=1,\nu=2,\\
    -\frac{it}{2\omega^{2}}\mbf{c}^{T}(H_{z}-H_{\bar{z}})\mbf{d},&\quad \mu=2,\nu=1.
\end{align}
The isotropic local law and the fact that $t\Tr{H_{z}}=t\Tr{H_{\bar{z}}}=1$ imply that the first two terms are $1+O(1/\sqrt{Nt})$ while the third and fourth are $O(1/\sqrt{Nt})$. We conclude that we can replace $\mc{M}_{P}$ with the identity in $h(P)$:
\begin{align*}
    h(P)&=\left[1+O\left(\frac{\log^{2}N}{\sqrt{Nt^{3}}}\right)\right]\frac{1}{(2\pi)^{N}}\int_{\mbb{R}^{2N}}e^{-\frac{1}{2}\|\mbf{x}\|^{2}}\prod_{j=1}^{l}\left(\mbf{x}^{T}Q_{j}\mbf{x}\right)^{m_{j}}d\mbf{x}.
\end{align*}
From this we observe that $\mbf{r}_{n}$ behaves as $\alpha\mbf{v}_{1}+i\beta\mbf{v}_{2}$ for two independent standard Gaussian vectors $\mbf{v}_{1},\mbf{v}_{2}\in\mbb{R}^{N}$. If $z_{0}\in\mbb{C}_{+}$, then $y>C>0$ in the support of $g_{z_{0}}$ (for sufficiently large $N$) and so when $\delta<\log N$ we have
\begin{align*}
    \alpha^{2}&=1+\frac{\delta}{\sqrt{\delta^{2}+4N\wt{\sigma}_{z}y^{2}}}=1+O\left(\frac{\log N}{\sqrt{N}}\right),\\
    \beta^{2}&=1-\frac{\delta}{\sqrt{\delta^{2}+4N\wt{\sigma}_{z}y^{2}}}=1+O\left(\frac{\log N}{\sqrt{N}}\right).
\end{align*}

If $z_{0}=u_{0}\in\mbb{R}$, then $y=O(N^{-1/2})$ in the support of $g_{u_{0}}$. We could argue in the same way as before but it is easier to see that the error terms are not singular as $y\to0$ from the alternative representation
\begin{align*}
    \mc{M}_{0}&=\begin{pmatrix}\eta_{z}^{2}+|X_{x}|^{2}+b^{2}&-bX_{x}+cX_{x}^{T}\\-bX_{x}^{T}+cX_{x}&\eta_{z}^{2}+|X_{x}|^{2}+c^{2}\end{pmatrix}^{-1}\\
    &=\left(1_{2}\otimes\sqrt{H_{x}}\right)\left[1+O\left(\frac{\log N}{\sqrt{Nt^{2}}}\right)\right]\left(1_{2}\otimes\sqrt{H_{x}}\right),
\end{align*}
where the second line follows because $|b|,|c|\leq C(|y|+\delta)$. Using the bound
\begin{align}
    \left|\eta_{z}-\eta_{x}\right|&\leq\frac{Ct}{\sqrt{N}}
\end{align}
from \cite[Lemma 3.5]{osman_bulk_2024} and the isotropic local law we again replace $\mc{M}_{P}$ with the identity. Now we cannot neglect the $\delta$ and $y$ dependence of $\alpha$ and $\beta$. From \cite[Lemma 8.1]{maltsev_bulk_2023} and the bound $|z-\bar{z}|<CN^{-1/2}$ in the support of $g_{u_{0}}$ we have
\begin{align*}
    \wt{\sigma}_{z}&=\left[1+O\left(\frac{1}{\sqrt{Nt^{3}}}\right)\right]\sigma_{z}\\
    &=1+O(t).
\end{align*}
The claim now follows after changing variable first to $y\mapsto y/\sqrt{\wt{\sigma}_{z}}$ and then to $y\mapsto y/\sqrt{N}$.

\paragraph{Acknowledgements}
This work was supported by the Royal Society, grant number \\RF/ERE210051.

\bibliographystyle{plain}
\bibliography{references.bib}

\end{document}